\def\today{\ifcase\month\or
  January\or February\or March\or April\or May\or June\or=
  July\or August\or September\or October\or November\or December\fi
  \space\number\day, \number\year}
 \newtheorem{theorem}{Theorem}
 \newtheorem{lemma}[theorem]{Lemma}
 \newtheorem{proposition}[theorem]{Proposition}
 \theoremstyle{definition}
 \theoremstyle{remark}
 \newcommand{\mc}{\mathcal}
 \newcommand{\R}{\mathbb{R}}
  \newcommand{\RR}{\mathcal{R}}
 \newcommand{\N}{\mathbb{N}}
 \newcommand{\Z}{\mathbb{Z}}
\newcommand{\dz}{\text{\rm d}z}
 \newcommand{\dt}{\text{\rm d}t}
 \newcommand{\du}{\text{\rm d}u}
 \newcommand{\dx}{\text{\rm d}x}
 \newcommand{\dy}{\text{\rm d}y}
\newcommand{\wt}{\widetilde}
\newcommand{\var}{{\rm Var\,}}
\begin{document}

\title[Derivative bounds for maximal operators]{Derivative bounds for fractional maximal functions}
\author[Carneiro and Madrid]{Emanuel Carneiro and Jos\'e Madrid}
\date{\today}
\subjclass[2010]{26A45, 42B25, 39A12, 46E35, 46E39.}
\keywords{Fractional maximal operator, Sobolev spaces, discrete maximal operators, bounded variation}

\address{IMPA - Instituto de Matem\'{a}tica Pura e Aplicada, Estrada Dona Castorina 110, Rio de Janeiro - RJ, Brazil, 22460-320.}
\email{carneiro@impa.br}
\email{josermp@impa.br}

\allowdisplaybreaks
\numberwithin{equation}{section}

\maketitle

\begin{abstract}In this paper we study the regularity properties of fractional maximal operators acting on $BV$-functions. We establish new bounds for the derivative of the fractional maximal function, both in the continuous and in the discrete settings.
\end{abstract}

\section{Introduction}

\subsection{Historical overview} Let $M$ denote the centered Hardy-Littlewood maximal operator on $\R^d$, i.e. for $f \in L^1_{loc}(\R^d)$,
\begin{equation}\label{Intro_max}
Mf(x) = \sup_{r >0} \frac{1}{m(B_r(x))} \int_{B_r(x)} |f (y)|\,\dy\,,
\end{equation}
where $B_r(x)$ is the ball centered at $x$ with radius $r$ and $m(B_r(x))$ is its $d$-dimensional Lebesgue measure. One of the cornerstones of harmonic analysis is the celebrated theorem of Hardy-Littlewood-Wiener that asserts that $M:L^p(\R^d) \to L^p(\R^d)$ is bounded for $1<p \leq \infty$. For $p=1$ we have $M: L^1(\R^d) \to L^{1,\infty}(\R^d)$ bounded. The process of averaging a function is, in essence, a variation-diminishing process (e.g. for a fixed radius $r$ in \eqref{Intro_max}), so it is natural to wonder if such behavior is preserved when taking a pointwise supremum over averages instead. This leads one to consider maximal operators acting on functions of bounded variation and Sobolev functions. The first result in this direction is due to Kinunnen \cite{Ki}, who showed that $M: W^{1,p}(\R^d) \to W^{1,p}(\R^d)$ is bounded for $1 < p \leq \infty$, elegantly combining basic tools of functional analysis. This paradigm has been extended to multilinear, local and fractional contexts in \cite{CM, KL, KiSa}. Due to the lack of reflexivity of $L^1$, results for $p=1$ are subtler. Examples of such results were motivated by the following question posed in \cite{HO}:

\smallskip

\noindent {\bf Question A.} (Haj\l asz and Onninen \cite{HO}) Is the operator $f \mapsto |\nabla Mf|$ bounded from $W^{1,1}(\R^d)$ to $L^1(\R^d)$?

\smallskip

\noindent A standard dilation argument reveals the true nature of this question: whether the variation of the maximal function is controlled by the variation of the original function, i.e. if we have
\begin{equation*}
\|\nabla Mf\|_{L^1(\R^d)} \leq C \|\nabla f\|_{L^1(\R^d)}.
\end{equation*}
Progress on this problem has been restricted to dimension $d=1$. For the uncentered maximal operator (defined similarly as in \eqref{Intro_max}, with the supremum taken over all balls containing the point $x$ in its closure), which we denote here by $\wt{M}$, Tanaka \cite{Ta} showed that if $f \in W^{1,1}(\R)$ then $\wt{M}f$ is weakly differentiable and 
\begin{equation}\label{Intro_Tanaka}
\big\|\big(\wt{M}f\big)'\big\|_{L^1(\R)} \leq 2\, \|f'\|_{L^{1}(\R)}.
\end{equation}
This result was later refined by Aldaz and P\'{e}rez L\'{a}zaro \cite{AP}, who showed that if $f$ is of bounded variation then $\wt{M}f$ is in fact absolutely continuous and 
\begin{equation}\label{Intro_AP}
\var \big(\wt{M}f\big) \leq \var (f),
\end{equation}
where $\var(f)$ denotes the total variation of $f$. Observe that inequality \eqref{Intro_AP} is sharp. More recently, in the remarkable paper \cite{Ku}, Kurka considered the centered maximal operator in dimension $d=1$ and proved that 
\begin{equation}\label{Intro_Ku}
\var (Mf) \leq 240,004\, \var (f).
\end{equation}
It is also shown in \cite{Ku} that if $f \in W^{1,1}(\R)$ then $Mf$ is weakly differentiable and \eqref{Intro_Tanaka} also holds with constant $C = 240,004$. It is currently unknown if one can bring down the value of such constant to $C=1$ in the centered case. The status of Question A in the general case $d>1$ is wide open, even in establishing the weak differentiability of $Mf$ or $\wt{M}f$ (related issues were considered by Haj\l asz and Maly in \cite{HM}). In \cite{CS}, Carneiro and Svaiter considered maximal operators of convolution type associated to smooth kernels (namely, the Gauss kernel and the Poisson kernel), and obtained the inequalities \eqref{Intro_Tanaka} and \eqref{Intro_AP} with the sharp constant $C=1$ by exploring the connections with the underlying partial differential equations. Other interesting works related to this theory are \cite{ACP, Lu1, Lu2, St}.

\smallskip
For $0 \leq \beta < d$, we define the centered fractional maximal operator as 
\begin{equation*}
M_{\beta}f(x) = \sup_{r >0} \frac{1}{m(B_r(x))^{1 - \frac{\beta}{d}}} \int_{B_r(x)} |f (y)|\,\dy.
\end{equation*}
When $\beta =0$ we plainly recover \eqref{Intro_max}. Such fractional maximal operators have applications in potential theory and partial differential equations. By comparison with an appropriate Riesz potential, one can show that if $1 < p < \infty$, $0 < \beta < d/p$ and $q = dp/(d-\beta p)$, then $M_{\beta}: L^p(\R^d) \to L^q(\R^d)$ is bounded. When $p=1$ we have again a weak-type bound (for details, see \cite[Chapter V, Theorem 1]{S}). In \cite{KiSa}, Kinnunen and Saksman studied the regularity properties of such fractional maximal operators. One of the results they proved \cite[Theorem 2.1]{KiSa} is that $M_{\beta}: W^{1,p}(\R^d) \to W^{1,q}(\R^d)$ is bounded for $p,q,\beta, d$ as described above, extending Kinunnen's original result \cite{Ki} for the case $\beta =0$. It is then natural to consider the extension of Question A to the fractional case at the endpoint $p=1$:

\smallskip

\noindent {\bf Question B.} Let $0 \leq \beta <d$ and $q = d/(d-\beta)$. Is the operator $f \mapsto |\nabla M_{\beta}f|$ bounded from $W^{1,1}(\R^d)$ to $L^q(\R^d)$? 

\smallskip

\noindent In the case $1 \leq \beta < d$, Question B admits a positive answer, which follows from the main result of Kinnunen and Saksman in their aforementioned work \cite{KiSa}. In fact, \cite[Theorem 3.1]{KiSa} states the following regularizing effect: if $f \in L^{r}(\R^d)$ with $1 < r < d$ and $1 \leq \beta < d/r$, then $M_\beta f$ is weakly differentiable and 
\begin{equation}\label{Intro_Kin_Sak}
\left| \nabla M_{\beta} f(x)\right| \leq C \,M_{\beta -1}f(x)
\end{equation}
holds for a.e. $x \in \R^d$, where $C = C(d, \beta)$ is a universal constant. In our case, given $1 \leq \beta < d$ and $f \in W^{1,1}(\R^d)$, by the Sobolev embedding we have $f \in L^{p^*}(\R^d)$, where $p^* = d/(d-1)$, and hence $f \in L^{r}(\R^d)$ for any $1 \leq r \leq p^*$. We may choose $r$ with $1< r <d$ such that $1\leq \beta < d/r$ and hence \eqref{Intro_Kin_Sak} holds. Then
\begin{equation*}
\left\| \nabla M_{\beta} f\right\|_{L^q(\R^d)} \leq C \left\| M_{\beta-1} f\right\|_{L^{q}(\R^d)} \leq C' \left\|  f\right\|_{L^{p^*}(\R^d)} \leq C'' \|\nabla f\|_{L^{1}(\R^d)}.
\end{equation*}

\noindent 

Question B (in the case $0 \leq \beta <1$) is the main motivation for this work. The presence of the fractional part introduces additional difficulties as we shall see in the course of the paper (e.g. one does not necessarily have $M_{\beta}(f) (x) \geq |f(x)|$ a.e.). Here we give a positive answer to Question B in dimension $d=1$ for the uncentered fractional maximal operator (which we denote by $\wt{M}_{\beta}$), both in the continuous and discrete settings. For general $d \geq 1$, in the discrete setting, we also obtain an interesting family of inequalities that approximate the conjectured bounds, for both the centered and uncentered versions (in a more general framework, where the balls are replaced by dilations of a given convex set). We now briefly state these results.

\subsection{Main results}
\subsubsection{Continuous setting} In dimension $d=1$, for $0\leq \beta <1$, the uncentered fractional maximal operator is 
\begin{equation}\label{Intro_frac_max_op}
\wt{M}_{\beta}f(x) = \sup_{\substack{r,s\geq 0 \\ r+s >0}} \frac{1}{(r+s)^{1 - \beta}} \int_{x-r}^{x+s} |f(y)|\,\dy.
\end{equation}
For a function $f: \R \to \R$ and $1\leq q < \infty$, motivated by the Riemann sums of a Riemann integrable function, we define its $q$-variation as
\begin{equation}\label{Intro_q_var}
\var_q(f) := \sup_{\mc{P}} \left(\sum_{n=1}^{N-1} \frac{|f(x_{n+1}) - f(x_n)|^q}{|x_{n+1} - x_n|^{q-1} }\right)^{1/q},
\end{equation}
where the supremum is taken over all finite partitions $\mc{P} = \{x_1 < x_2 < \ldots < x_N\}$. This is also known as the {\it Riesz $q$-variation} of $f$ (see, for instance, the discussion in \cite{BaLi} for this object and its generalizations). Our first result is the extension of \eqref{Intro_Tanaka} and \eqref{Intro_AP} for the uncentered fractional maximal operator, which comes with an interesting regularizing effect.
\begin{theorem}\label{Thm1}
Let $0\leq \beta < 1$ and $q = 1/(1-\beta)$. Let $f:\R \to \R$ be a function of bounded variation such that $\wt{M}_{\beta}f \not\equiv \infty$. Then $\wt{M}_{\beta}f$ is absolutely continuous and its derivative satisfies
\begin{equation}\label{Intro_q_bound1}
\big\|\big(\wt{M}_{\beta}f\big)'\big\|_{L^q(\R)} = \var_q\big(\wt{M}_{\beta}f\big) \leq 8^{1/q}\,\var(f).
\end{equation}
\end{theorem}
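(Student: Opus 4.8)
The strategy is to first understand the local structure of $\widetilde{M}_\beta f$ on the "disconnection set" $\Omega = \{x : \widetilde{M}_\beta f(x) > |f|^{**}(x)\}$ (suitably interpreted), where the maximal function is produced by a genuine average over an interval of positive length rather than by the function itself, and to show that on each connected component of $\Omega$ the function $\widetilde{M}_\beta f$ has controlled oscillation. I would begin by recording the basic properties of $\widetilde{M}_\beta f$: lower semicontinuity, the fact that $\widetilde{M}_\beta f$ is finite everywhere (given it is not identically $\infty$ and $f \in BV$), and the key \emph{sub-mean-value / flatness} property — if $x$ lies in a component $(a,b)$ of $\Omega$, then $\widetilde{M}_\beta f$ at $x$ is attained (or approached) by an average over an interval, and on such a component $\widetilde{M}_\beta f$ cannot have a strict interior local maximum unless it is "flat" there. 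This is the analogue of the Aldaz–P\'erez L\'azaro argument for \eqref{Intro_AP}, but the fractional weight $(r+s)^{-(1-\beta)}$ breaks the clean maximum principle, so one must track how the normalizing power interacts with translation and dilation of the optimizing interval.

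**Key steps.** (i) Reduce to $f \geq 0$ and, by the usual approximation, to $f$ nice enough (piecewise monotone, or a step-type function) so that the sup in \eqref{Intro_frac_max_op} is attained; pass to the limit at the end using lower semicontinuity and Fatou-type arguments for $\mathrm{var}_q$. (ii) Show $\widetilde{M}_\beta f$ is locally Lipschitz (hence differentiable a.e.) on $\Omega$ by differentiating the optimizing average in $x$: if the optimal interval at $x$ is $[x-r,x+s]$ with value $V = (r+s)^{\beta-1}\int_{x-r}^{x+s} f$, then comparing with the same interval translated shows $|(\widetilde{M}_\beta f)'(x)| \le (r+s)^{\beta-1}\big(f(x+s)-f(x-r)\big)$ in an appropriate one-sided sense, and the first-order (Euler–Lagrange) conditions in $r$ and $s$ give $f(x-r), f(x+s)$ comparable to $(1-\beta)V/(r+s)^{\beta}\cdot$(something), which is how the regularizing gain to $L^q$ appears. (iii) On $\R \setminus \overline{\Omega}$ we have $\widetilde{M}_\beta f = |f|^{**}$ (the relevant "detached" piece), but here is a subtlety special to the fractional case flagged in the introduction: $M_\beta f \ge |f|$ can fail, so one must argue that outside $\Omega$ the variation of $\widetilde{M}_\beta f$ is still dominated by that of $f$ — plausibly because there $\widetilde M_\beta f$ agrees with a maximal-type envelope of $f$ that is monotone on the relevant pieces. (iv) Assemble: bound $\mathrm{var}_q(\widetilde{M}_\beta f)$ by summing contributions over components of $\Omega$ and the complementary set, using H\"older/power-mean inequalities to convert the pointwise derivative bound from step (ii) into the $L^q$ bound, and track constants to land at $8^{1/q}$.

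**Main obstacle.** The crux is step (ii)–(iii): establishing the \emph{sharp-ish pointwise derivative inequality} for $\widetilde{M}_\beta f$ on the disconnection set and, simultaneously, the structural claim that on each component the optimizing interval varies in a controlled (monotone) way with $x$, so that the local variation telescopes against $\mathrm{var}(f)$ without accumulating error. Unlike the $\beta = 0$ case, dilating the optimizing interval changes the value through the factor $(r+s)^{\beta-1}$, so the "no interior local max" heuristic must be replaced by a careful first/second-order analysis in the three parameters $(x,r,s)$, and the extraction of the exponent $q = 1/(1-\beta)$ — i.e., that the correct scale-invariant quantity is $\sum |f(x_{n+1})-f(x_n)|^q/|x_{n+1}-x_n|^{q-1}$ rather than plain variation — falls out of exactly this computation. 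I expect the constant $8^{1/q}$ to come from a dyadic/covering decomposition of each component (splitting into at most a bounded number of monotone sub-arcs and over/undershooting intervals) combined with the elementary inequality $(A+B)^q \le 2^{q-1}(A^q+B^q)$ applied a fixed number of times.
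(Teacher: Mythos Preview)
Your proposal has a structural gap: the disconnection-set framework you import from Aldaz--P\'erez L\'azaro does not survive the passage to $\beta>0$, and the paper says so explicitly in the introduction. For $0<\beta<1$ and $f$ locally bounded, the fractional average $A_{r,s}f(x)=(r+s)^{\beta-1}\int_{x-r}^{x+s}f$ tends to $0$ as $r+s\to 0$, so the natural ``pointwise value'' against which one would compare $\wt{M}_\beta f(x)$ is $0$, not $|f(x)|$. Your set $\Omega=\{\wt{M}_\beta f>|f|^{**}\}$ is therefore either ill-defined or trivially equal to $\{\wt{M}_\beta f>0\}$, and step~(iii) --- where you write that ``plausibly $\wt{M}_\beta f$ agrees with a maximal-type envelope of $f$'' on the complement --- has no content. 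The ``no interior local maximum on components of $\Omega$'' heuristic likewise collapses: with the fractional weight, dilating the optimizing interval can move the value either way, so there is no clean maximum principle, and your proposed first/second-order analysis in $(x,r,s)$ is not, as stated, a substitute for it.

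The paper bypasses the disconnection set entirely. It first reduces to Lipschitz $f$ by mollification, proving pointwise convergence $\wt{M}_\beta f_\varepsilon(x)\to\wt{M}_\beta f(x)$ and passing to the limit in $\var_q$; Riesz's classical theorem then gives the absolute continuity and the identity $\|(\wt{M}_\beta f)'\|_{L^q}=\var_q(\wt{M}_\beta f)$. For Lipschitz $f$ the core is a direct partition argument (Lemma~\ref{Lem10_crucial_too}): on a finite partition along which $\wt{M}_\beta f$ is monotone, one locates the node $x_m$ where the discrete slope $|\wt{M}_\beta f(x_m)-\wt{M}_\beta f(x_{m+1})|/|x_m-x_{m+1}|$ is maximal, takes the \emph{minimal} optimizing interval $[x_m-t,x_m+u]$ there, and compares its length $t+u$ to that of the optimizing interval at the endpoint. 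Three cases (longer; shorter but reaching past $x_1$; shorter and not reaching) drive an inductive reduction. The exponent $q$ enters because one bounds $\sum_n(\text{slope})^q|x_n-x_{n+1}|$ by $(\text{max slope})^{q-1}\cdot|\wt{M}_\beta f(x_1)-\wt{M}_\beta f(x_N)|$ and controls each factor by telescoping against $\int|f'|$, using $(r+s)^{\beta}\cdot(r+s)^{(1-\beta)(q-1)}=1$. The constant $8^{1/q}$ is $4$ from this lemma (a two-fold overlap in the induction) times $2$ from patching monotone pieces across local extrema of $\wt{M}_\beta f$ --- not from the convexity inequality $(A+B)^q\le 2^{q-1}(A^q+B^q)$ you suggested.
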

\noindent The constant $C=8^{1/q}$ appearing in \eqref{Intro_q_bound1} is likely not sharp. The problem of finding the sharp constant in this inequality is certainly an interesting one. Another inviting possibility is the investigation of the validity of Theorem \ref{Thm1} for the centered fractional maximal function, which, if confirmed, would be an extension of Kurka's work \cite{Ku}. It is worth mentioning that our strategy to approach the fractional case is very different from that of Tanaka \cite{Ta} and Aldaz and P\'{e}rez L\'{a}zaro \cite{AP} for the case $\beta =0$. In those papers the essential idea is to prove that  the maximal function does not have any local maxima in the set where it disconnects from the original function. In the fractional case $\beta >0$, the mere notion of the disconnecting set is ill-posed, since one does not necessarily have $M_{\beta}(f) (x) \geq |f(x)|$ a.e. anymore.

\subsubsection{Discrete setting} We shall denote  a vector $\vec{n} \in \Z^d$ by $\vec{n} = (n_1, n_2, \ldots, n_d)$. For a function $f:\mathbb{Z}^{d}\rightarrow \mathbb{R}$ (or, in general, for a vector-valued function $f:\mathbb{Z}^{d}\rightarrow \mathbb{R}^m$) we define its $\ell^{p}$-norm as usual:
\begin{equation}\label{Intro_l_p_norm}
\|f\|_{\ell^{p}{( \Z^{d})}}= \left(\sum_{\vec n\in \Z^{d}} {|f(\vec n)|^{p}}\right)^{1/p},
\end{equation}
if $1\leq p<\infty$, and
\begin{equation*}
\|f\|_{\ell^{\infty}{(\Z^{d})}}= \sup_{\vec n\in\Z^{d} }{|f(\vec n)|}.
\end{equation*}
The gradient $\nabla{f}$ of a discrete function $f:\Z^d \to \R$ is the vector
\begin{equation*}
\nabla{f(\vec n)}=\left(\frac{\partial{f}}{\partial{x_{1}}}{(\vec n)},\frac{\partial{f}}{\partial{x_{2}}}{(\vec n)},\ldots,\frac{\partial{f}}{\partial{x_{d}}}{(\vec n)} \right),
\end{equation*}
where
\begin{equation*}
\frac{\partial{f}}{\partial{x_{i}}}{(\vec n)}=f(\vec n+\vec e_{i})-f(\vec n),
\end{equation*}
and $\vec e_{i}=(0,0,\ldots,1,\ldots,0)$ is the canonical $i-$th base vector. For $f :\Z\to \R$ and $1\leq q < \infty$, the discrete analogue of \eqref{Intro_q_var} is the $q$-variation defined by
\begin{equation*}
\var_q(f) = \left(\sum_{n=-\infty}^{\infty} |f(n+1) - f(n)|^q\right)^{1/q} = \|f'\|_{\ell^q(\Z)}.
\end{equation*}

\smallskip

For $0 \leq \beta < 1$ and $f :\Z\to \R$, we define the one-dimensional discrete uncentered fractional maximal operator by
\begin{equation}\label{Intro_Def_Disc_Max_oper_dim1}
\wt{M}_{\beta}f(n) = \sup_{r,s \geq 0} \,\frac{1}{(r + s+1)^{1 - \beta}} \sum_{k = -r}^{s} |f(n +k)|.
\end{equation}
Our next result is the discrete analogue of Theorem \ref{Thm1}.

\begin{theorem}\label{Thm2}
Let $0\leq \beta < 1$ and $q = 1/(1-\beta)$. Let $f:\Z \to \R$ be a function of bounded variation such that $\wt{M}_{\beta}f \not\equiv \infty$. Then 
\begin{equation*}
\big\|\big(\wt{M}_{\beta}f\big)'\big\|_{\ell^q(\Z)} \leq 4^{1/q} \,\|f'\|_{\ell^1(\Z)}.
\end{equation*}
\end{theorem}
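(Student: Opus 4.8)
The plan is to run a discrete analogue of the proof of Theorem~\ref{Thm1}, taking advantage of a structural feature special to the discrete model. First I would reduce to the case $f\geq 0$ (this affects neither $\wt{M}_\beta f$ nor the hypothesis on $\|f'\|_{\ell^1}$) and record the pointwise bound $g:=\wt{M}_\beta f\geq f$, which holds because the one-point window $\{n\}$ is admissible — the discrete substitute for the domination $M_\beta f\geq|f|$ that fails in the continuous setting. Since $f\in BV$ forces $f(n)\to 0$ as $n\to\pm\infty$ whenever $\beta>0$ (otherwise $\wt{M}_\beta f\equiv\infty$), the supremum defining $g(n)$ is attained; fix for each $n$ an optimal window $I_n=[a_n,b_n]\ni n$ of minimal length. (The case $\beta=0$ is classical, or can be reached by a limiting argument.)

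The key step is a dichotomy: if $g(n+1)>g(n)$ then $a_{n+1}=n+1$ — otherwise $I_{n+1}$ would contain $n$ and force $g(n)\geq g(n+1)$ — and, symmetrically, if $g(n)>g(n+1)$ then $b_n=n$. Two consequences follow. First, at a strict local maximum $n_0$ of $g$ one necessarily has $I_{n_0}=\{n_0\}$, hence $g(n_0)=f(n_0)\leq\tfrac12\var(f)$; equivalently, $g$ has no strict local maximum on the detachment set $E=\{n:g(n)>f(n)\}$. Second, one obtains sharp one-sided increment estimates: for an ascending step $n$, writing $I_{n+1}=[n+1,c]$ and $L=c-n$, comparison with the admissible window $[n,c]$ for $g(n)$ gives $0<g(n+1)-g(n)\leq g(n+1)\big(1-(L/(L+1))^{1-\beta}\big)\leq(1-\beta)g(n+1)/L$; raising to the $q$-th power and invoking the defining identity $q(1-\beta)=1$ together with $g(n+1)=L^{\beta-1}\sum_{k=n+1}^{c}f(k)$ yields
\[
\big(g(n+1)-g(n)\big)^q\leq(1-\beta)^q\,\frac{\big(\sum_{k\in I_{n+1}}f(k)\big)^q}{|I_{n+1}|^{\,q+1}},
\]
with the mirror-image estimate for descending steps in terms of the left-anchored window $I_n=[a_n,n]$.

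To conclude I would decompose $\Z$ into its maximal runs of monotonicity of $g$; since $\|x\|_{\ell^q}\leq\|x\|_{\ell^1}$ for $q\geq1$, summing on each run gives $\|(\wt{M}_\beta f)'\|_{\ell^q}^q\leq\sum_{R}(\mathrm{osc}_R g)^q$. Each run joins a local minimum of $g$ to an adjacent local maximum; using that $g\geq f$ at local minima and (by the dichotomy) $g=f$ at strict local maxima, the oscillation over such a run is at most $\sum_{e\in R}|\Delta f(e)|$, and because the runs partition $\Z$ a second application of $\|\cdot\|_{\ell^q}\leq\|\cdot\|_{\ell^1}$ bounds the whole sum by $\var(f)^q$ — in the non-degenerate situation. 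The main obstacle is precisely the degenerate one: runs terminating at a non-strict local maximum, i.e.\ at a plateau of $g$ contained in $E$ (and, for $\beta=0$, the possible non-attainment of the supremum). On these stretches $g$ need not be pinned to $f$, and one must instead feed in the per-step estimates above, charging the plateau-adjacent portions of the variation against $\var(f)^q$ through a stopping-time-type accounting in which the identity $q(1-\beta)=1$ is exactly what makes the quantities "height$^q$/width" summable against the total variation of $f$. Carrying out this bookkeeping — not the clean part — is the heart of the proof, and is what degrades the constant from $1$ to $4^{1/q}$, the factor $4$ reflecting the separate treatment of ascending and descending behaviour together with the loss incurred at the degeneracies.
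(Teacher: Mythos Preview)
Your dichotomy (optimal windows anchored at the step endpoint) and the consequence $g=f$ at \emph{strict} local maxima are correct, and in the ``non-degenerate'' regime they yield a clean argument with constant~$1$, sharper than the paper's $4^{1/q}$. The gap is exactly where you locate it: plateaus of $g$ lying entirely in the detachment set \emph{do} occur (e.g.\ $f=\chi_{\{0,1\}}$ with $\beta=\tfrac12$ gives $g(0)=g(1)=\sqrt{2}>1=f(0)=f(1)$), and your sketch for handling them is not viable as stated. Your per-step bound reads
\[
\big(g(n+1)-g(n)\big)^q\;\leq\;C\,\frac{\big(\sum_{k\in I_{n+1}}f(k)\big)^q}{|I_{n+1}|^{\,q+1}},
\]
which is expressed in terms of a window \emph{sum} of $f$, not a \emph{difference} of values of $f$; nothing in your outline converts such sums into the total variation. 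Replacing $\sum_{I_{n+1}} f$ by $|I_{n+1}|\cdot\|f\|_{\ell^\infty}\leq |I_{n+1}|\cdot\var(f)$ gives a term of size $\var(f)^q/|I_{n+1}|$, and there is no a priori control on $\sum_n 1/|I_{n+1}|$ independent of $f$. The phrase ``stopping-time-type accounting'' hides the real obstacle.

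The paper bypasses the strict/non-strict dichotomy entirely. Instead of enlarging the window (which drops $f(n)$ and leaves you with $\sum f$), it \emph{shifts} it: from $g(m)=A_{t,0}f(m)$ and $g(m+1)\geq A_{t,0}f(m+1)$ one gets
\[
g(m)-g(m+1)\;\leq\;(t+1)^{\beta-1}\big(f(m-t)-f(m+1)\big),
\]
a bound that is already an increment of $f$. Combined with the telescoped estimate $|g(a)-g(b)|\leq (r+1)^\beta\sum_{n=a-r}^{b-1}|f'(n)|$, Lemma~\ref{Lem7_crucial} then runs a short recursion on the position of the maximal jump to control the $q$-variation of $g$ on any monotone stretch $[a,b]$ by $2\,\|f'\|_{\ell^1}^{q-1}\sum_{n=a-r}^{b-1}|f'(n)|$; summing over stretches with a two-fold overlap on the strings of local maxima yields $4^{1/q}$. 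If you want to salvage your route, the shift comparison --- not the enlarge comparison --- is the missing ingredient.
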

\noindent In the case of the discrete uncentered Hardy-Littlewood maximal function ($\beta =0$ in the setting above), Theorem \ref{Thm2} was proved by Bober, Carneiro, Hughes and Pierce in \cite{BCHP}, with the sharp constant $C=1$. The analogue of Kurka's inequality \eqref{Intro_Ku} for the discrete centered Hardy-Littlewood maximal function was established by Temur in \cite{Te} (with constant $C = 294,912,004$). As in the continuous case, the investigation of the validity of Theorem \ref{Thm2} for the discrete centered operator is also a very interesting problem.

\subsubsection{Operators associated to convex sets}\label{Subsec_Omega} We now report progress related to Question B in the multidimensional discrete setting. We do this for a more general family of fractional maximal operators defined as follows. Let $\Omega\subset \mathbb R^{d}$ be a bounded open convex set with Lipschitz boundary. Let us assume that $\vec 0\in$ int$(\Omega)$ and that $\pm \vec e_{i} \in \overline\Omega$ for $1 \leq i \leq d$ (by renormalizing $\Omega$ if necessary)\footnote{This renormalization is merely aesthetic. For general $\Omega$, one has to dilate the constants appearing in our results accordingly.}. For $r>0$ we write 
\begin{equation*}
\overline\Omega_{r}(\vec{x}_{0}) =\big\{\vec{x} \in\mathbb R^{d}; \, r^{-1}(\vec{x}-\vec{x}_{0})\in \overline{\Omega}\big\},
\end{equation*}
and for $r=0$ we consider
\begin{equation*}
\overline\Omega_{0}(\vec{x}_{0}) =\{\vec{x}_{0}\}.
\end{equation*}
Whenever $\vec{x}_{0}=\vec 0$ we shall write $\overline\Omega_{r}=\overline\Omega_{r}\big(\vec{{0}}\big) $ for simplicity. This object plays the role of the ``ball of center $\vec x_{0}$ and radius $r$" in our maximal operators below. For instance, to work with regular $\ell^{p}-$balls, one should consider $\Omega=\{\vec x\in\R^{d}; \|\vec x\|_{p}<1 \}$, where $\|\vec x\|_{p}=(|x_{1}|^{p}+|x_{2}|^{p}+\ldots+|x_{d}|^{p})^{\frac{1}{p}} $ for $\vec x=(x_{1},x_{2},\ldots,x_{d})\in \R^{d}.$

\smallskip

Given $0\leq \beta<d$ and $f:\Z^d \to \R$, we denote by $M_{\Omega, \beta}$ the discrete centered fractional maximal operator  \footnote{We remark that the results in this section remain valid if we choose $N(r)^{-1} \,r^{\beta}$ instead of $N(r)^{-1+\frac{\beta}{d}}$ in the definition of our discrete fractional maximal functions.} associated to $\Omega$, i.e. 
\begin{equation}\label{Intro_disc_Omega_cent}
M_{\Omega, \beta}f(\vec n)=\sup_{r\geq 0} \frac{1}{N(r)^{1- \frac{\beta}{d}}}\,\sum_{\vec m\in \overline\Omega_{r} }{|f(\vec n+\vec m)}|,
\end{equation}
and we denote by $\widetilde{M}_{\Omega,\beta}$ its uncentered version
\begin{equation}\label{Intro_disc_Omega_uncent}
\widetilde{M}_{\Omega,\beta}f(\vec n)=\sup_{\overline\Omega_{r}(\vec x_{0}) \owns \vec n}\, \frac{1}{N(\vec x_{0},r)^{1-\frac{\beta}{d}}}\,{\sum_{\vec m\in \overline\Omega_{r}(\vec x_{0}) }{|f( \vec m)}|},
\end{equation}
where $N(\vec x,r)$ is the number of the lattice points in the set $\overline\Omega_{r}(\vec x)$\ (and $N(r):=N(\vec 0,r)$). It should be understood throughout the rest of the paper that we always consider $\Omega$-balls with at least one lattice point.

\smallskip

These convex $\Omega$--balls have roughly the same behavior as the regular Euclidean balls from the geometric and arithmetic points of view. For instance, we have the following asymptotics \cite[Chapter VI \S 2, Theorem 2]{Lang}, for the number of lattice points
\begin{equation}\label{estimativa}
 N(\vec x,r)=C_{\Omega}\,r^{d}+O\big(r^{d-1}\big) 
\end{equation} 
as $r \rightarrow \infty$, where $C_{\Omega}=m(\Omega)$ is the $d$--dimensional volume of $\Omega,$ and the constant implicit in the big O notation depends only on the dimension $d$ and on the set $\Omega$ (e.g. if $\Omega$ is the $\ell^{\infty}$-ball we have the exact expression $N(r)=(2\lfloor r \rfloor +1)^{d}$). From \eqref{estimativa} we can find a constant $c_{1}$ depending only on the dimension $d$ and on the set $\Omega$ such that
\begin{equation}\label{Intro_Def_c1}
N(\vec x,r)\leq C_{\Omega}(r+c_{1})^{d}
\end{equation}
and
\begin{equation}\label{Intro_Def_c1_2}
N(\vec x,r)\geq \max\{C_{\Omega}(\max\{r-c_{1},0\})^{d},1\}=:C_{\Omega}(r-c_{1})^{d}_{+}.
\end{equation}
We define $c_{2}>c_{1}$ as the constant such that 
\begin{equation}\label{Intro_Def_c2}
{C_{\Omega}(c_{2}-c_{1})^{d}}=1.
\end{equation}
Since $\Omega$ is bounded, there exists $\lambda>0$ (depending only on $\Omega$) such that $\overline\Omega\subset  \overline B_{\lambda} = \overline B_{\lambda} (\vec 0)$ (note that $\lambda\geq1$, since we assume $\pm\vec {e_{i}}\in\overline\Omega$ for $1 \leq i \leq d$). This means that if $\vec p\in\overline\Omega_{r}(\vec x_{0})$ then
\begin{equation}\label{Intro_Def_lambda}
|\vec p-\vec x_{0}|\leq \lambda r.
\end{equation}

If $0< \beta<d$ and $f:\Z^d \to \R$, we consider the discrete fractional integral operator 
\begin{equation*}
I_{\beta}f(\vec n)=\sum_{\vec m\in\Z^{d}\setminus\{\vec 0\}}\frac{f(\vec n-\vec m)}{|\vec m|^{d - \beta}}.
\end{equation*}
One can show that 
$$M_{\Omega,\beta} f(\vec n)\leq C\,I_{\beta}|f|(\vec n)+|f(\vec n)|$$ 
for all $ \vec n\in \Z^{d}$, where $C = C(d, \Omega,\beta)$. It is known that if $1 < p < \infty$, $0 < \beta < d/p$ and $q = dp/(d-\beta p)$, then $I_{\beta}:\ell^{p}(\Z^{d})\rightarrow \ell^{q}(\Z^{d})$ is bounded (see, for instance, Pierce's thesis \cite[Proposition 2.4]{P}). Observe also that, if $p \leq q$, then 
\begin{equation}\label{Intro_inclusion}
\|f\|_{\ell^{q}(\Z^d)} \leq \|f\|_{\ell^{p}(\Z^d)}.
\end{equation}
This plainly implies that $M_{\Omega,\beta}:\ell^{p}(\Z^{d})\rightarrow \ell^{q}(\Z^{d})$ is bounded for $p,q,\beta,d$ as above. One can also verify the pointwise inequality
\begin{equation*}
\left|\frac{\partial}{\partial{x_{i}}}(M_{\Omega,\beta} f)(\vec n)\right|\leq M_{\Omega,\beta}\left(\frac{\partial{f}}{\partial{x_{i}}}\right)(\vec n)
\end{equation*}
for all $\vec n \in \Z^d$. Hence, for $f \in \ell^p(\Z^d)$, we have \footnote{Throughout this paper our constants may vary from line to line.} 
\begin{equation}\label{Intro_disc_bound}
\|\nabla M_{\Omega,\beta}f\|_{\ell^{q}(\Z^{d})}\leq C\,\| M_{\Omega,\beta}|\nabla f|\|_{\ell^{q}(\Z^{d})}  \leq C' \,\|\nabla f\|_{\ell^{p}(\Z^{d})}.\end{equation}
Moreover, if $f,g\in \ell^{p}(\Z^{d})$, we have (recall that derivation is a bounded operator in $\ell^{q}(\Z^d)$, by the triangle inequality)
\begin{equation*}
\|\nabla M_{\Omega,\beta}f-\nabla M_{\Omega,\beta}g\|_{\ell^{q}(\Z^{d})}\leq C\, \| M_{\Omega,\beta}f-M_{\Omega,\beta}g\|_{\ell^{q}(\Z^{d})}\leq C\, \|M_{\Omega, \beta}{|f-g|} \|_{\ell^{q}(\Z^{d})} \leq C'\,\| f-g\|_{\ell^{p}(\Z^{d})},
\end{equation*}
and we see that the operator $f\mapsto \nabla M_{\Omega,\beta}f$ is continuous from $\ell^{p}(\Z^{d})$ to $\ell^{q}(\Z^{d})$. Similar remarks apply to the uncentered version $\wt{M}_{\Omega,\beta}$. In relation to \eqref{Intro_disc_bound}, the conjectured bound suggested by Question B in the discrete endpoint case $p=1$ is the following:

\smallskip

\noindent {\bf Question C.} Let $0 \leq \beta <d$ and $q = d/(d-\beta)$. For a discrete function $f:\Z^d \to \R$ do we have $\|\nabla M_{\Omega,\beta}f\|_{\ell^{q}(\Z^{d})}\leq C (d, \Omega,\beta) \,\|\nabla f\|_{\ell^{1}(\Z^{d})}$?

\smallskip

\noindent Our next result is related to this question. In the case $0 \leq \beta <1$ we present a family of estimates that approximate the conjectured bounds, whereas in the case $1\leq \beta <d$ we give a positive answer (under the assumption that $f \in \ell^1(\Z^d)$) by adapting the methods of Kinnunen and Saksman \cite{KiSa} to the discrete setting. We complement these results with their corresponding continuity statements.

\begin{theorem}\label{Thm3}.
\begin{enumerate}
\item[(i)] Let $0\leq \beta <d$ and $0 \leq \alpha \leq 1$. Let $q \geq 1$ be such that 
\begin{equation}\label{Thm3_condition}
q > \frac{d}{d - \beta + \alpha}.
\end{equation}
Then there exists a constant $C = C(d,\Omega,\alpha,\beta,q) >0$ such that
\begin{equation}\label{main ineq cent}
\|\nabla M_{\Omega,\beta}f\|_{\ell^q(\Z^d)} \leq C\,\|\nabla f\|_{\ell^1(\Z^d)}^{1-\alpha}\,\|f\|^{\alpha}_{\ell^{1}(\Z^{d})}\ \ \forall f\in \ell^{1}(\Z^{d}).
\end{equation}
Moreover, the operator $f\mapsto \nabla M_{\Omega,\beta}f$ is continuous from $\ell^{1}(\Z^d)$ to $\ell^{q}(\Z^{d})$.

\smallskip

\item[(ii)] Let $1\leq \beta <d$ and $0 \leq \alpha < 1$. Let 
\begin{equation}\label{Thm3_condition_part2}
q = \frac{d}{d - \beta + \alpha}.
\end{equation}
Then there exists a constant $C = C(d,\Omega,\alpha,\beta) >0$ such that
\begin{equation}\label{main ineq cent_part2}
\|\nabla M_{\Omega,\beta}f\|_{\ell^q(\Z^d)} \leq C\,\|\nabla f\|_{\ell^1(\Z^d)}^{1-\alpha}\,\|f\|^{\alpha}_{\ell^{1}(\Z^{d})}\ \ \forall f\in \ell^{1}(\Z^{d}).
\end{equation}
Moreover, the operator $f\mapsto \nabla M_{\Omega,\beta}f$ is continuous from $\ell^{1}(\Z^d)$ to $\ell^{q}(\Z^{d})$.
\end{enumerate} 

\smallskip

The same results hold for the discrete uncentered fractional maximal operator $\widetilde{M}_{\Omega,\beta}$. 
\end{theorem}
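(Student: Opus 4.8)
The plan is to derive both parts from four ingredients: \textbf{(a)} a discrete Kinnunen--Saksman-type pointwise bound for $\nabla M_{\Omega,\beta}$; \textbf{(b)} the pointwise comparison $M_{\Omega,\beta}f\le C\,I_{\beta}|f|+|f|$ recorded in the Introduction; \textbf{(c)} the Hardy--Littlewood--Sobolev inequality for the discrete fractional integrals together with the discrete Gagliardo--Nirenberg--Sobolev inequality $\|f\|_{\ell^{d/(d-1)}(\Z^d)}\le C\,\|\nabla f\|_{\ell^1(\Z^d)}$; and \textbf{(d)} the elementary interpolation $\|h\|_{\ell^q}\le \|h\|_{\ell^{q_0}}^{1-\mu}\|h\|_{\ell^{q_1}}^{\mu}$ valid whenever $q_0\le q\le q_1$ and $\tfrac1q=\tfrac{1-\mu}{q_0}+\tfrac{\mu}{q_1}$.

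\textbf{Step 1 (the key pointwise bound).} First I would prove: for every $f\in\ell^1(\Z^d)$, $\vec n\in\Z^d$ and direction $\vec e_i$,
\begin{equation*}
\Big|\tfrac{\partial}{\partial x_i}M_{\Omega,\beta}f(\vec n)\Big|\ \le\ C\,\bigl(M_{\Omega,\beta-1}f(\vec n)+M_{\Omega,\beta-1}f(\vec n+\vec e_i)\bigr),
\end{equation*}
where $M_{\Omega,\beta-1}$ is \eqref{Intro_disc_Omega_cent} (its order $\beta-1$ may be negative when $\beta<1$, which is harmless since $f\in\ell^1$ makes the supremum finite). Since $f\in\ell^1$ and $\beta<d$, the supremum defining $M_{\Omega,\beta}f(\vec n)$ is attained at a finite radius. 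Using the inclusion $\vec e_i+\overline\Omega_\rho\subseteq\overline\Omega_{\rho+1}$ (convexity of $\Omega$ and $\pm\vec e_i\in\overline\Omega$) to compare the competitors at $\vec n$ and $\vec n+\vec e_i$, the optimality of the chosen radii, and the asymptotics \eqref{estimativa} — which give $N(\rho+c)/N(\rho)=1+O(1/\rho)$ and $N(\rho)^{1/d}\le C\rho$ — one first obtains a bound of the form $C\bigl(\tfrac1r M_{\Omega,\beta}f(\vec n)+\tfrac1{r'}M_{\Omega,\beta}f(\vec n+\vec e_i)\bigr)$ with $r,r'$ the optimal radii at $\vec n,\vec n+\vec e_i$; and then $\tfrac1\rho M_{\Omega,\beta}f\le C\,M_{\Omega,\beta-1}f$ at the relevant point because the two normalizations differ exactly by the factor $N(\rho)^{1/d}\asymp\rho$. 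The small-radius case (where $M_{\Omega,\beta}f(\vec n)=|f(\vec n)|$) is handled directly, since $M_{\Omega,\beta-1}f(\vec n)$ dominates $|f(\vec n)|$ and $|f(\vec n\pm\vec e_i)|$. This is the fractional, discrete counterpart of \cite[Theorem~3.1]{KiSa}, and it is the step I expect to be the main obstacle: it requires careful handling of the $\Omega$-ball geometry (the $\approx r^{d-1}$ boundary lattice points and the comparability of optimal radii at neighbouring points). Combining it with \textbf{(b)} (for $\beta=1$, $I_0$ is replaced by the Hardy--Littlewood maximal operator) yields $|\nabla M_{\Omega,\beta}f|\le C\,I_{\beta-1}|f|+C|f|$ up to a unit translation, which is immaterial for $\ell^q$ norms.

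\textbf{Step 2 (assembling the estimates).} Record the two extreme cases. For $\alpha=0$: using instead $|\nabla M_{\Omega,\beta}f|\le M_{\Omega,\beta}|\nabla f|\le C\,I_\beta|\nabla f|+|\nabla f|$ and the facts that $I_\beta$ is of weak type $(1,d/(d-\beta))$ and bounded on $\ell^\infty$, interpolation of the target space gives $\|I_\beta|\nabla f|\|_{\ell^q}\le C\|\nabla f\|_{\ell^1}$ for all $q>d/(d-\beta)$, hence $\|\nabla M_{\Omega,\beta}f\|_{\ell^q}\le C\|\nabla f\|_{\ell^1}$ there. For $\alpha=1$: Step~1 plus the same target interpolation for $I_{\beta-1}$ (critical index $d/(d-\beta+1)$; when $\beta<1$ the kernel is summable and any $q\ge1$ works) gives $\|\nabla M_{\Omega,\beta}f\|_{\ell^q}\le C\|f\|_{\ell^1}$ for all $q>d/(d-\beta+1)$. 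For part (i), ingredient \textbf{(d)} with $\mu=1-\alpha$ interpolates these two bounds for the fixed function $\nabla M_{\Omega,\beta}f$; letting the base indices decrease to $d/(d-\beta)$ and $d/(d-\beta+1)$ sweeps out exactly the range \eqref{Thm3_condition}, larger $q$ being absorbed by the inclusion $\ell^{q'}\subseteq\ell^q$. For part (ii), where $\beta\ge1$ and $\alpha<1$, $I_{\beta-1}$ is a genuine fractional integral: the Hardy--Littlewood--Sobolev inequality gives $I_{\beta-1}\colon\ell^{p_0}\to\ell^{q}$ with $p_0=\tfrac{d}{d-1+\alpha}>1$ and $\tfrac1q=\tfrac1{p_0}-\tfrac{\beta-1}{d}=\tfrac{d-\beta+\alpha}{d}$, while the Gagliardo--Nirenberg--Sobolev inequality (obtained from \textbf{(c)} by Hölder) gives $\|f\|_{\ell^{p_0}}\le C\|\nabla f\|_{\ell^1}^{1-\alpha}\|f\|_{\ell^1}^{\alpha}$; chaining these through Step~1 yields \eqref{main ineq cent_part2} at the endpoint $q=d/(d-\beta+\alpha)$.

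\textbf{Step 3 (continuity and the uncentered case).} For the continuity statements I would combine: the continuity of $f\mapsto\nabla M_{\Omega,\beta}f$ between the $\ell^{p}$--$\ell^{q}$ spaces recalled in the Introduction for $p>1$ (so $f_k\to f$ in $\ell^1$ forces pointwise convergence of $\nabla M_{\Omega,\beta}f_k$), the uniform $\ell^q$ bound from Step~2, and equidecay of the tails: splitting $f_k=f_k\mathbf{1}_{|\vec m|\le R}+f_k\mathbf{1}_{|\vec m|>R}$, the Step~1 bound shows the tail of $M_{\Omega,\beta-1}(f_k\mathbf{1}_{|\vec m|\le R})$ decays like $|\vec m|^{-(d-\beta+1)}$, which lies in $\ell^q$ since $q>d/(d-\beta+1)$, while $\|M_{\Omega,\beta-1}(f_k\mathbf{1}_{|\vec m|>R})\|_{\ell^q}\le C\|f_k\mathbf{1}_{|\vec m|>R}\|_{\ell^1}$; tightness of $\{f_k\}$ gives the equidecay, and a Vitali-type argument gives $\nabla M_{\Omega,\beta}f_k\to\nabla M_{\Omega,\beta}f$ in $\ell^q$. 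Finally, everything has an exact analogue for $\widetilde{M}_{\Omega,\beta}$: the same comparison $\widetilde{M}_{\Omega,\beta}f\le C\,I_\beta|f|+|f|$, a Kinnunen--Saksman-type bound $|\nabla\widetilde{M}_{\Omega,\beta}f(\vec n)|\le C\,\widetilde{M}_{\Omega,\beta-1}f(\vec n)$ proved by the same optimal-ball comparison, and the same $\ell^p$-theory, so the identical scheme applies.
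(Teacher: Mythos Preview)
Your route for part~(ii) is essentially the paper's: the discrete Kinnunen--Saksman bound \(|\nabla M_{\Omega,\beta}f|\lesssim M_{\Omega,\beta-1}f\) followed by the \(\ell^r\to\ell^q\) mapping of \(M_{\Omega,\beta-1}\) and Sobolev/interpolation. For part~(i), however, your approach is genuinely different. The paper does \emph{not} invoke any Kinnunen--Saksman bound when \(\beta<1\); instead it decomposes \(\Z^d\) into level sets \(X_k^{\pm}\) according to the optimal radius, produces \emph{two} pointwise bounds for \(|M_\beta f(\vec n)-M_\beta f(\vec n+\vec e_d)|\) (one in terms of \(\sum|f(\cdot)-f(\cdot+\vec e_d)|\), one in terms of \(\sum f\), via the inclusion \(\overline\Omega_k(\vec n)\subset\overline\Omega_{k+1}(\vec n+\vec e_d)\)), takes the \((1-\alpha,\alpha)\)-weighted geometric mean, and sums using H\"older and Fubini; the hypothesis \(q>d/(d-\beta+\alpha)\) appears exactly as the summability condition \(\gamma>0\) for \(\sum_{\vec m}N^+(|\vec m|/\lambda-1)^{-(1+\gamma)}\). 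Your scheme instead pushes the Kinnunen--Saksman inequality down to all \(\beta\in[0,d)\) (which is legitimate: nothing in the paper's \S2.3 argument uses \(\beta\ge1\)), obtains the endpoint estimates at \(\alpha=0\) (via \(M_\beta|\nabla f|\)) and \(\alpha=1\) (via \(M_{\beta-1}f\)), and then interpolates the single function \(\nabla M_\beta f\) by H\"older. This is cleaner and more unified; the paper's direct decomposition, on the other hand, is self-contained and its intermediate estimates are reused verbatim in the continuity proof.

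Two small points. First, ``\(I_\beta\) bounded on \(\ell^\infty\)'' is not what you want (that fails for \(\beta>0\)); you mean \(I_\beta:\ell^1\to\ell^\infty\), which holds since the kernel values are at most \(1\), and this combined with weak \((1,d/(d-\beta))\) does give strong \((1,q)\) for \(q>d/(d-\beta)\). Second, in the interpolation you should take \(\mu=\alpha\), not \(1-\alpha\). Neither slip is fatal.

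For continuity the approaches again diverge: the paper proves a Luiro-type convergence-of-radii lemma, gets pointwise convergence, and closes with Brezis--Lieb plus a tail bound derived from the same \(X_k^\pm\) machinery; your Vitali/equi-tightness argument through the \(M_{\beta-1}\) control is a valid alternative and uses only that \(q>d/(d-\beta+1)\), which holds in all cases considered.
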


Theorem \ref{Thm3} extends the result of Carneiro and Hughes in \cite[Theorem 1]{CH}, which corresponds to the case $\beta = 0$, $\alpha = 1$ and $q=1$. Our approach here is different and simpler than that of \cite{CH}. The boundedness part of Theorem \ref{Thm3} (i) for the classical discrete Hardy-Littlewood maximal operator ($\beta = 0$) and $q=1$ was first established by Carneiro and Rogers (unpublished manuscript). It is important to observe that inequality \eqref{main ineq cent} (and its analogue for the uncentered case) can only hold if 
\begin{equation}\label{Intro_nec_cond}
q \geq \frac{d}{d - \beta + \alpha}.
\end{equation}
This is due, essentially, to a dilation argument. To see this, let us consider, for instance, the uncentered case where $\Omega = (-1,1)^d$ is the unit open cube. Let $k \in \N$ and consider the cube $Q_k = [-k,k]^d$ and its characteristic function $f_k := \chi_{Q_k}$. One has $\|f_k\|_{\ell^{1}(\Z^d)} \sim_d k^d$, $\|\nabla f_k\|_{\ell^1(\Z^d)} \sim_d k^{d-1}$ and $\|\nabla \wt{M}_{\Omega,\beta}f_k\|_{\ell^q(\Z^d)} \gg_{\Omega,\beta,d} k^{\frac{d}{q}-1 + \beta}$. One can see this last estimate by considering the region $H = \{\vec{n} = (n_1, n_2, \ldots, n_d) \in \Z^d; \,n_1 \geq 4dk\,; \ |n_i| \leq k,\, {\rm for}\,\, i = 2,3,\ldots, d\}$ and showing that the maximal function at $\vec{n} \in H$ is realized by the cube of side $n_1 + k$ that contains the cube $Q_k$. Then we sum $|\wt{M}_{\Omega,\beta}f_k (\vec n + \vec{e}_1) - \wt{M}_{\Omega,\beta}f_k (\vec n)|^q$ from $n_1 = 4dk$ to $\infty$, and then sum these contributions over the $\sim k^{d-1}$ possibilities for $(n_2, \ldots, n_d)$. Letting $k \to \infty$ we obtain the necessary condition \eqref{Intro_nec_cond}. 

\smallskip

We may collect the cases left open in Theorem \ref{Thm3} in our final question:

\smallskip

\noindent {\bf Question D.} Does the inequality \eqref{main ineq cent} (and its analogue for the uncentered case) hold for all $\alpha \leq \beta$ and $q = d/(d - \beta + \alpha)$?  

\smallskip

We now proceed to the proofs of these results. In doing so, we opt to consider the discrete cases first, since they describe the essence of the main ideas with a little less technicalities than the continuous cases. In Section \ref{Sec2} we prove the boundedness part of Theorem \ref{Thm3}. In Section \ref{Sec3} we prove the continuity part of Theorem \ref{Thm3}, a nontrivial statement that does not follow directly from the boundedness, as the maximal operators are no longer sublinear at the derivative level. In Section \ref{Sec4} we prove Theorem \ref{Thm2} and, finally, in Section \ref{Sec5} we adapt some of our ideas used in the discrete setting to the continuous setting, and conclude by proving Theorem \ref{Thm1}.

\section{Proof of Theorem \ref{Thm3} - Boundedness}\label{Sec2}

Throughout this section we work with the discrete maximal operators \eqref{Intro_disc_Omega_cent} and \eqref{Intro_disc_Omega_uncent} associated to a convex set $\Omega$ as described in \S \ref{Subsec_Omega}, and we remove the subscript $\Omega$ in some passages for simplicity. Given such a convex set $\Omega$, let us fix the constants $C_\Omega = m(\Omega)$, $c_1$, $c_2$ and $\lambda$ as defined in \eqref{estimativa} - \eqref{Intro_Def_lambda}. In proving the boundedness statements of Theorem \ref{Thm3} we may assume, without loss of generality, that $f$ is nonnegative since $\big|\nabla |f|(\vec{n})\big| \leq  \big|\nabla f(\vec{n})\big|$ and $M _{\beta}|f| = M_{\beta}f$ (resp. $\wt{M} _{\beta}|f| = \wt{M}_{\beta}f$).

\subsection{Centered case - part (i)} \label{Cent_case_part_i} To prove \eqref{main ineq cent} it is sufficient to show that
\begin{equation}\label{object d}
\left\| \frac{\partial{M_{\beta}f}}{\partial x_{i}}\right\|^{q}_{\ell^{q}{(\Z^d)}}\leq C \,\|\nabla f\|_{\ell^1(\Z^d)}^{(1-\alpha)q}\,{\|f\|^{\alpha q}_{\ell^{1}{(\Z^{d})}}} 
\end{equation}
for each $i=1,2,\dots,d.$ We will work with $i=d$ and the other cases are analogous. Since $f\in \ell^{1}{(\Z^{d})}$, for each $\vec n\in \mathbb{Z}^d$ there exists $r\geq 0$ such that
\begin{equation}\label{Sec2_eq_average}
M_{\beta}f(\vec n)=A_{r}f(\vec n):=\frac{1}{N(r)^{1-\frac{\beta}{d}}}{\sum_{\vec m\in \Omega_{r}}{f(\vec n+\vec m )}}.
\end{equation}
Let $\ell(\vec n)$  be the minimum $\ell \in \Z^+$ such that there exists $r \geq 0$  which satisfies \eqref{Sec2_eq_average} and $\lceil r\rceil=\ell(\vec n)$. Also, let $r(\vec n) \geq 0$ be such that $r(\vec n)$ satisfies \eqref{Sec2_eq_average} and $\lceil r(\vec n)\rceil=\ell(\vec n).$

\smallskip

For all $k\in \Z^+$ we define the set $X^{+}_{k}$ by
\begin{equation}\label{Def_X_n_+}
X^{+}_{k}:=\{\vec n \in\Z^d: M_{\beta}f(\vec n + \vec{e}_d)\geq M_{\beta}f(\vec n) \ {\rm and}\ {\ell(\vec n + \vec{e}_d)}=k \}
\end{equation}
and the set $X^{-}_{k}$ by
\begin{equation}\label{Def_X_n_-}
X^{-}_{k}:=\{\vec n \in\Z^d: M_{\beta}f(\vec n + \vec{e}_d)< M_{\beta}f(\vec n) \ {\rm and} \  \ell(\vec n) =k \}.
\end{equation}
Hence, we may write 
\begin{align*}
\sum_{\vec n \in \Z^d}{\left|\frac{\partial}{\partial x_{d}}{M_{\beta}f(\vec n)}\right|}^{q}&=\sum_{k\geq0}\, \sum_{\vec n \in X^{+}_{k}}\big(M_{\beta}f(\vec n + \vec{e}_d)-M_{\beta}f(\vec n)\big)^{q} +  \sum_{k\geq0}\, \sum_{\vec n \in X^{-}_{k}}\big(M_{\beta}f(\vec n)-M_{\beta}f(\vec n + \vec{e}_d)\big)^{q}.
\end{align*}
We will prove that
\begin{equation}\label{object -}
\sum_{k\geq0}\, \sum_{\vec n \in X^{-}_{k}}\big(M_{\beta}f(\vec n)-M_{\beta}f(\vec n + \vec{e}_d)\big)^{q}\leq C\,\|\nabla f\|_{\ell^1(\Z^d)}^{q(1-\alpha)}\,{\|f\|^{q\alpha}_{\ell^{1}(\Z^{d})}},
\end{equation}
and, analogously, we will have
\begin{equation}\label{object +}
\sum_{k\geq0}\, \sum_{\vec n \in X^{+}_{k}}\big(M_{\beta}f(\vec n + \vec{e}_d)-M_{\beta}f(\vec n)\big)^{q} \leq C\,\|\nabla f\|_{\ell^1(\Z^d)}^{q(1-\alpha)}\,{\|f\|^{q\alpha}_{\ell^{1}(\Z^{d})}}.
\end{equation}
Inequalities \eqref{object -} and \eqref{object +} then imply \eqref{object d} for $i=d$ \footnote{Recall that our constants may vary from line to line.}. To show \eqref{object -}, first we note that for $\vec n \in X^{-}_{k}$ and $r = r(\vec n)$ we have 
\begin{align}\label{ineq fund 1}
\begin{split}
M_{\beta}f(\vec n)-M_{\beta}f(\vec n + \vec{e}_d)&\leq A_{r}{f(\vec n)}-A_{r}{f(\vec n + \vec{e}_d)}\\
&= \frac{1}{N( r)^{1-\frac{\beta}{d}}}{\sum_{\vec m\in \overline\Omega_{r}}{f(\vec n+\vec m )}}-\frac{1}{N( r)^{1-\frac{\beta}{d}}}{\sum_{\vec m\in \overline\Omega_{r}}{f(\vec n + \vec{e}_d +\vec m )}}\\
&\leq\frac{1}{N^{+}(k-1)^{1-\frac{\beta}{d}}}{\sum_{\vec m\in \overline\Omega_{k}}{\big|f(\vec n+\vec m )-f(\vec n + \vec{e}_d +\vec m)\big|}}\,,
\end{split}
\end{align}
where $N^{+}(\vec x,\tau):=\max\{N(\vec x,\tau),1\}$, and $N^{+}( \tau)=N^{+}(\vec0,\tau)$ \footnote{Here we formally include the possibility of having $\tau <0$, with the understanding that, in this case, $N^{+}(\vec x,\tau) = 1$.}. On the other hand,
\begin{align}\label{Sec1_eq_conv_e_d}
\begin{split}
M_{\beta}f(\vec n + \vec{e}_d)&\geq\frac{1}{N(k+1)^{1-\frac{\beta}{d}}}{\sum_{\vec m\in \overline\Omega_{   k+1}}{f(\vec n + \vec{e}_d +\vec m )}}\\
&\geq \frac{N(k+1)^{\frac{\beta}{d}}}{N(k+1)}{\sum_{\vec m\in \overline\Omega_{k}}{f(\vec n+\vec m )}}\\
&\geq\frac{N(r)}{N(k+1)}{M_{\beta}f(\vec n)}.
\end{split}
\end{align}
In the second inequality above we have used the convexity of $\Omega$ and the fact that $\vec {-e_{d}}\in\overline\Omega$ to conclude that $\overline{\Omega}_{k}(\vec n) \subset \overline{\Omega}_{k+1}(\vec n + \vec{e}_d)$. Using \eqref{Sec1_eq_conv_e_d} we get
\begin{align}\label{ineq fund 2}
\begin{split}
M_{\beta}f(\vec n)-M_{\beta}f(\vec n + \vec{e}_d)&\leq \left(1-\frac{N(r)}{N(k+1)} \right)M_{\beta}f(\vec n)\\
&\leq \frac{N(k+1)-N^{+}(k-1)}{N(k+1)}\frac{1}{N^{+}(k-1)^{1-\frac{\beta}{d}}}\sum_{\vec m\in \overline\Omega_{k}}{f(\vec n+\vec m)}.
\end{split}
\end{align}
Putting together the estimates \eqref{ineq fund 1} and \eqref{ineq fund 2}, we see that $\big(M_{\beta}f(\vec n)-M_{\beta}f(\vec n + \vec{e}_d)\big)^{q}$ is  bounded above by the product \footnote{If $\alpha = 0\,\,{\rm or}\,1$, it is understood that we only have one term in this product. The modifications for the rest of the proof are standard.}
\begin{align}\label{cota sup 1}
\begin{split}
& \left( \frac{1}{N^{+}(k-1)^{1-\frac{\beta}{d}}}{\sum_{\vec m\in \overline\Omega_{k}}{\big|f(\vec n +\vec m )-f(\vec n + \vec{e}_d+\vec m)\big|}}\right)^{q(1-\alpha)} \\
& \ \ \ \ \ \ \ \ \ \ \ \ \ \ \ \ \  \ \ \ \ \ \ \times \left( \frac{N(k+1)-N^{+}(k-1)}{N(k+1)}\frac{1}{N^{+}(k-1)^{1-\frac{\beta}{d}}}\sum_{\vec m\in \overline\Omega_{k}}{f(\vec n + \vec m)}\right)^{q\alpha}.
\end{split}
\end{align}

By \eqref{Intro_Def_c1} and \eqref{Intro_Def_c1_2}, there is a positive constant $C$ such that
\begin{equation}\label{des N}
\frac{N(\vec x,k+1)-N^{+}(\vec x,k-1)}{N(\vec x,k+1)}\leq \frac{C}{N^{+}(\vec x,k-1)^{\frac{1}{d} }}\ \ \forall k\in\Z^+  \,;\,  \forall \vec x \in \R^d.
\end{equation}
Let 
\begin{equation}\label{Def_gamma}
\left( 1 - \frac{\beta}{d}\right)q + \frac{\alpha q}{d} = 1 + \gamma.
\end{equation}
From \eqref{Thm3_condition} we have $\gamma >0$. Then it follows from \eqref{cota sup 1} and \eqref{des N} that $\big(M_{\beta}f(\vec n)-M_{\beta}f(\vec n + \vec{e}_d)\big)^{q} $ is bounded above by the product 
\begin{align*}
& \left( \frac{1}{N^{+}(k-1)^{1+\gamma}}{\left(\sum_{\vec m\in \overline\Omega_{k}}{\big|f(\vec n+\vec m )-f(\vec n + \vec{e}_d+\vec m)\big|}\right)^{q}}\right)^{(1-\alpha)} \!\!\!\!\!\! \times \left( \frac{C}{N^{+}(k-1)^{1+ \gamma}}\left(\sum_{\vec m\in \overline\Omega_{k}}{f(\vec n+\vec m)}\right)^{q}\right)^{\alpha}.
\end{align*}
Thus, by H\"{o}lder's inequality with exponents $p=\frac{1}{1-\alpha}$ and $p'=\frac{1}{\alpha}$, we see that the left-hand side of \eqref{object -} is bounded by 
\begin{align*}
\begin{split}
& \left[ \sum_{k\geq0}\, \sum_{\vec n \in X^{-}_{k}} \frac{1}{N^{+}(k-1)^{1+ \gamma}}\left({\sum_{\vec m\in \overline\Omega_{k}}{\big|f(\vec n+\vec m )-f(\vec n + \vec{e}_d +\vec m)\big|}}\right)^{q}\right]^{1-\alpha}\\
& \ \ \ \ \ \ \ \ \ \ \ \ \ \ \ \ \  \ \ \ \ \ \ \ \ \ \ \ \ \  \times \left[ \sum_{k\geq0}\, \sum_{\vec n \in X^{-}_{k}} \frac{C}{N^{+}(k-1)^{1+\gamma}}\left(\sum_{\vec m\in \overline\Omega_{k}}{f(\vec n+\vec m)}\right)^{q}\right]^{\alpha}.
\end{split}
\end{align*}
Since $q \geq 1$, this last product is bounded by
\begin{align*}
\begin{split}
& \|\nabla f\|_{\ell^1(\Z^d)}^{(q-1)(1-\alpha)} \left[\sum_{k\geq0}\, \sum_{\vec n \in X^{-}_{k}} \frac{1}{N^{+}(k-1)^{1+\gamma}}\left({\sum_{\vec m\in \overline\Omega_{k}}{\big|f(\vec n +\vec m )-f(\vec n + \vec{e}_d+\vec m)\big|}}\right)\right]^{1-\alpha}\\
& \ \ \ \ \ \ \ \ \ \ \ \ \ \ \ \ \  \ \ \ \ \ \ \times \| f\|_{\ell^1(\Z^d)}^{(q-1)\alpha} \left[ \sum_{k\geq0}\, \sum_{\vec n \in X^{-}_{k}} \frac{C}{N^{+}(k-1)^{1+\gamma}}\left(\sum_{\vec m\in \overline\Omega_{k}}{f(\vec n+\vec m)}\right)\right]^{\alpha}.
\end{split}
\end{align*}
By Fubini's theorem and \eqref{Intro_Def_lambda}, this is bounded by 
\begin{align*}
\begin{split}
& \|\nabla f\|_{\ell^1(\Z^d)}^{(q-1)(1-\alpha)} \left[ \sum_{\vec m\in \Z^d} \sum_{k \geq \frac{|\vec m|}{\lambda}} \frac{1}{N^{+}(k-1)^{1+\gamma}}\sum_{\vec n \in X^{-}_{k}} \big|f(\vec n+\vec m )-f(\vec n + \vec{e}_d +\vec m)\big| \right]^{1-\alpha}\\
& \ \ \ \ \ \ \ \ \ \ \ \ \ \ \ \ \  \ \ \ \ \ \ \times \| f\|_{\ell^1(\Z^d)}^{(q-1)\alpha} \left[ \sum_{\vec m\in \Z^d} \sum_{k \geq \frac{|\vec m|}{\lambda}} \frac{C}{N^{+}(k-1)^{1+\gamma}} \sum_{\vec n \in X^{-}_{k}} f(\vec n+\vec m)\right]^{\alpha},
\end{split}
\end{align*}
which turns out to be bounded by 
\begin{align}\label{Sec2_Important_estimate}
\begin{split}
& \|\nabla f\|_{\ell^1(\Z^d)}^{(q-1)(1-\alpha)} \left[ \sum_{\vec m\in \Z^d}  \frac{1}{N^{+}\big(\frac{|\vec{m}|}{\lambda}-1\big)^{1+\gamma}}\sum_{k \geq \frac{|\vec m|}{\lambda}}\sum_{\vec n \in X^{-}_{k}} \big|f(\vec n+\vec m )-f(\vec n + \vec{e}_d +\vec m)\big| \right]^{1-\alpha}\\
& \ \ \ \ \ \ \ \ \ \ \ \ \ \ \ \ \  \ \ \ \ \ \ \times \| f\|_{\ell^1(\Z^d)}^{(q-1)\alpha} \left[ \sum_{\vec m\in \Z^d} \frac{C}{N^{+}\big( \frac{|\vec m|}{\lambda} -1\big)^{1+\gamma}} \sum_{k \geq \frac{|\vec m|}{\lambda}} \sum_{\vec n \in X^{-}_{k}}f(\vec n+\vec m)\right]^{\alpha}.
\end{split}
\end{align}
Since the sets $X^{-}_{k}$ are pairwise disjoint, this is bounded above by
\begin{align}\label{ineq holder_with_q-1_v4}
\begin{split}
\|\nabla f\|_{\ell^1(\Z^d)}^{q(1-\alpha)} \left[ \sum_{\vec m\in \Z^d}  \frac{1}{N^{+}\big(\frac{|\vec{m}|}{\lambda}-1\big)^{1+\gamma}} \right]^{1-\alpha} \times \ \  \| f\|_{\ell^1(\Z^d)}^{q\alpha} \left[ \sum_{\vec m\in \Z^d} \frac{C}{N^{+}\big( \frac{|\vec m|}{\lambda} -1\big)^{1+\gamma}} \right]^{\alpha}.
\end{split}
\end{align}

By \eqref{Intro_Def_c1_2} we know that
\begin{equation*}
N^+\left(\frac{|\vec m|}{\lambda}-1\right)\geq \max\left\{C_{\Omega}\left(\max\left\{\frac{|\vec m|}{\lambda}-1-c_{1},0\right\}\right)^{d},1\right\} \ \ \forall \, \vec m\in\mathbb Z^{d}.
\end{equation*}
This implies that both sums in brackets in \eqref{ineq holder_with_q-1_v4} are finite and we arrive at the desired estimate \eqref{object -}. Using \eqref{Intro_Def_c1} we see that 
$$\sum_{\vec m\in \mathbb Z^{d}} \frac{1}{N^+\Big(\frac{|\vec{m}|}{\lambda}-1\Big)^{1+\gamma}}\to \infty\ \ \text{as}\ \ \gamma \to 0^+\,,$$
which implies that our constant $C$ in \eqref{main ineq cent} blows up when we approach the case of equality in \eqref{Intro_nec_cond}. 


\subsection{Uncentered case - part (i)} Here it suffices to show that 
\begin{equation}\label{object noncent d}
\left\| \frac{\partial{\widetilde M_{\beta} f}}{\partial x_{i}}\right\|^{q}_{\ell^{q}(\Z^{d})}\leq \,C \,\|\nabla f\|_{\ell^1(\Z^d)}^{(1-\alpha)q}\,\|f\|^{\alpha q}_{\ell^{1}(\Z^{d})}
\end{equation}
for each $i=1,2,\dots,d$. We work again with $i=d$ and the other cases are analogous. Since $f\in \ell^{1}{(\mathbb{Z}^{d})}$,  for each $\vec n \in \Z^{d}$ we can take a point $\vec x=\vec x(\vec n) \in \R^{d}$ and a radius $r = r(\vec n)$ such that $\vec n \in\overline \Omega_{r}(\vec x)$ and the fractional average over the set $\overline \Omega_{r}(\vec x)$ realizes the supremum in the maximal function, i.e.
\begin{equation}\label{Sec3_max-ave}
\widetilde M_{\beta} f(\vec n)=A_{(\vec x,r)}f(\vec n):=\frac{1}{N(\vec x,r)^{1-\frac{\beta}{d}}}{\sum_{\vec m\in \overline\Omega_{r}
(\vec x)}{f(\vec m )}}.
\end{equation}
\noindent
Let $\tilde{\ell}(\vec n)$ be the minimum $\tilde{\ell} \in\Z^+$ such that there is a pair $(\vec{x}, r)$ that verifies the equality \eqref{Sec3_max-ave} with $\lceil r\rceil = \tilde{\ell}(\vec n)$. Let $r(\vec n)$ be such a radius and $\vec{x}(\vec n)$ be such a center.

\smallskip

For $k \in \Z^+$ we now define the set $\widetilde{X}^{+}_{k}$ by
\begin{equation*}
\widetilde{X}^{+}_{k}:=\big\{\vec n\in\Z^d: \widetilde M_{\beta}f(\vec n + \vec{e}_d)\geq \widetilde M_{\beta}f(\vec n) \ {\rm and}\ {\tilde{\ell}(\vec n +\vec{e}_d)}=k \big\}
\end{equation*}
and the set $\widetilde X^{-}_{k}$ by
\begin{equation*}
\widetilde X^{-}_{k}:=\big\{\vec n \in\Z^d: \widetilde M_{\beta}f(\vec n + \vec{e}_d)< \widetilde M_{\beta}f(\vec n) \ {\rm and} \  \tilde{\ell}(\vec n) =k \big\}.
\end{equation*}
Hence,
\begin{align*}
\sum_{\vec n \in \Z^d}{\left|\frac{\partial}{\partial x_{d}}{\widetilde M_{\beta} f(\vec n)}\right|^{q}}&= \, \sum_{k\geq0}\sum_{\vec n \in X^{+}_{k}}{\big(\widetilde M_{\beta} f(\vec n + \vec{e}_d)-\widetilde M_{\beta} f(\vec n)}\big)^{q} +\sum_{k\geq0}\sum_{\vec n \in X^{-}_{k}}{\big(\widetilde M_{\beta} f(\vec n)-\widetilde M_{\beta} f(\vec n + \vec{e}_d)}\big)^{q}.
\end{align*}
We will prove that
\begin{equation}\label{object noncent d -}
\sum_{k\geq0}\sum_{\vec n \in X^{-}_{k}}{\big(\widetilde M_{\beta} f(\vec n)-\widetilde M_{\beta} f(\vec n + \vec{e}_d)}\big)^{q}\leq C\,\|\nabla f\|_{\ell^1(\Z^d)}^{q(1-\alpha)}\,{\|f\|^{q\alpha}_{\ell^{1}(\Z^{d})}},
\end{equation}
and, analogously, we will have 
\begin{equation}\label{object noncent d +}
\sum_{k\geq0}\sum_{\vec n \in X^{+}_{k}}{\big(\widetilde M_{\beta} f(\vec n + \vec{e}_d)-\widetilde M_{\beta} f(\vec n)}\big)^{q} \leq C\,\|\nabla f\|_{\ell^1(\Z^d)}^{q(1-\alpha)}\,{\|f\|^{q\alpha}_{\ell^{1}(\Z^{d})}}.
\end{equation}
As a consequence of \eqref{object noncent d -} and \eqref{object noncent d +} we will obtain \eqref{object noncent d} for $i=d$, as desired.

\smallskip

For $\vec n \in X^{-}_{k}$ let $r = r(\vec n)$ and $\vec x= \vec{x}(\vec n)$. Then 
\begin{align*}
\widetilde M_{\beta} f(\vec n)-\widetilde M_{\beta} f(\vec n + \vec{e}_d)&\leq A_{(\vec x ,r)}{f(\vec n)}-A_{(\vec x +\vec e_{d},r)}{f(\vec n + \vec{e}_d)}\\
&= \frac{1}{N(\vec x ,r)^{1-\frac{\beta}{d}}}{\sum_{\vec m\in \Omega_{r}{(\vec x)}}{f(\vec m )}} -\frac{1}{N(\vec x +\vec e_{d},r)^{1-\frac{\beta}{d}}}{\sum_{\vec m\in \Omega_{r}{(\vec x +\vec e_{d})}}{f(\vec m )}}\\
&\leq \frac{1}{N(\vec x,r)^{1-\frac{\beta}{d}}}\sum_{\vec m\in \Omega_{r}(\vec x)}{\big|f(\vec m)-f(\vec m +\vec e_{d})\big|}\\
&\leq \frac{1}{N^{+}(\vec x,k-1)^{1-\frac{\beta}{d}}}\sum_{\vec m\in \Omega_{k}(\vec x)}{\big|f(\vec m)-f(\vec m +\vec e_{d})\big|}.
\end{align*}
On the other hand,
\begin{align*}
\widetilde M_{\beta} f(\vec n + \vec{e}_d)&\geq\frac{1}{N(\vec x + \vec e_d,k+1)^{1-\frac{\beta}{d}}}{\sum_{\vec m\in \Omega_{k+1}{ (\vec x + \vec e_d)}}{f(\vec m )}}\\
&\geq \frac{N(\vec x ,k+1)^{\frac{\beta}{d}}}{N(\vec x ,k+1)}{\sum_{\vec m\in \Omega_{k}{(\vec x)}}{f(\vec m )}}\\
&\geq\frac{N(\vec x,r)}{N(\vec x ,k+1)}\,\widetilde M_{\beta} f(\vec n).
\end{align*}
In the second inequality above we have used the convexity of $\Omega$ and the fact that $\vec {-e_{d}}\in\overline\Omega$ to conclude that $\overline{\Omega}_{k}(\vec x) \subset \overline{\Omega}_{k+1}(\vec x + \vec{e}_d)$. Therefore,
\begin{align*}
\widetilde M_{\beta} f(\vec n)-\widetilde M_{\beta} f(\vec n + \vec{e}_d)&\leq \,\left(1-\frac{N(\vec x ,r)}{N(\vec x ,k+1)} \right)\widetilde M_{\beta} f(\vec n)\\
&\leq\, \frac{N(\vec x,k+1)-N^+(\vec x,k-1)}{N(\vec x,k+1)}\frac{1}{N^+(\vec x,k-1)^{1-\frac{\beta}{d}}}\sum_{\vec m\in \overline\Omega_{k}{(\vec x)}}{f(\vec m)}.
\end{align*}
With these two inequalities, we can see that $\big(\widetilde M_{\beta} f(\vec n)-\widetilde M_{\beta} f(\vec n + \vec{e}_d)\big)^{q} $ is bounded above by the product 
\begin{align}\label{alfa 1 non cent}
\begin{split}
& \left( \frac{1}{N^{+}(\vec x,k-1)^{1-\frac{\beta}{d}}}\sum_{\vec{m}\in \Omega_{k}(\vec x)}{\big|f(\vec{m})-f(\vec m +\vec e_{d})\big|}\right)^{q(1-\alpha)}\\
&  \ \ \ \ \ \ \ \ \ \ \ \ \ \ \ \ \ \ \times \left( \frac{N(\vec x,k+1)-N^{+}(\vec x,k-1)}{N(\vec x,k+1)}\frac{1}{N^{+}(\vec x,k-1)^{1-\frac{\beta}{d}}}\sum_{\vec m\in \overline\Omega_{k}{(\vec x)}}{f(\vec m)}\right)^{q\alpha}.
\end{split}
\end{align}
Let $\gamma>0$ be defined as in \eqref{Def_gamma}. Using \eqref{des N}, we conclude that \eqref{alfa 1 non cent} is bounded above by the product 
\begin{align*}
& \left( \frac{1}{N^{+}(\vec x,k-1)^{1+\gamma}}\left({\sum_{\vec m\in \Omega_{k}{(\vec x)}}{\big|f(\vec m )-f(\vec m +\vec e_{d})\big|}}\right)^{q}\right)^{1-\alpha} \!\! \times \ \left( \frac{C}{N^{+}(\vec x,k-1)^{1+\gamma}}\left(\sum_{\vec m\in \overline\Omega_{k}{(\vec x)}}{f(\vec m)}\right)^{q}\right)^{\alpha}.
\end{align*}
For each $r\in \R$, let $\widetilde N^+(r):=\min\big\{N^+(\vec x,r); \, \vec x\in\mathbb R^{d} \big\}$. Thus, by H\"{o}lder's inequality with exponents $p=\frac{1}{1-\alpha}$ and $p'=\frac{1}{\alpha}$ we see that the left-hand side of \eqref{object noncent d -} is bounded by
\begin{align*}
& \left[ \sum_{k\geq0}\sum_{\vec n \in X^{-}_{k}} \frac{1}{\widetilde N^+(k-1)^{1+\gamma}}\left({\sum_{\vec m\in \Omega_{k}(\vec x (\vec n))}{\big|f(\vec m )-f(\vec m +\vec e_{d})\big|}}\right)^{q}\right]^{1-\alpha}\\
&  \ \ \ \ \ \ \ \ \ \ \ \ \ \ \ \ \ \  \ \ \ \ \ \ \ \ \ \times  \left[\sum_{k\geq0}\sum_{\vec n \in X^{-}_{k}}\frac{C}{\widetilde N^+(k-1)^{1+\gamma}}\left(\sum_{\vec m\in \overline\Omega_{k}(\vec x(\vec n))}{f(\vec m)}\right)^{q}\right]^{\alpha}.
\end{align*}
In turn, this is bounded by 
\begin{align*}
&\left[ \sum_{k\geq0}\sum_{\vec n \in X^{-}_{k}} \frac{1}{\widetilde N^+(k-1)^{1+\gamma}}\left({\sum_{\vec m\in \overline B_{2 k \lambda}(\vec n)}{\big|f(\vec m )-f(\vec m +\vec e_{d})\big|}}\right)^{q}\right]^{1-\alpha}\\
&  \ \ \ \ \ \ \ \ \ \ \ \ \ \ \ \ \ \  \ \ \ \ \ \ \ \ \ \times \left[ \sum_{k\geq0}\sum_{\vec n \in X^{-}_{k}}\frac{C}{\widetilde N^+(k-1)^{1+\gamma}}\left(\sum_{\vec m\in \overline B_{2k\lambda}(\vec n)}{f(\vec m)}\right)^{q}\right]^{\alpha},
\end{align*}
since $\overline{\Omega}\subset \overline{B}_{\lambda}$ and thus $\overline{\Omega}_{k}(\vec x(\vec n))\subset \overline{B}_{k\lambda}(\vec x(\vec n)) \subset \overline{B}_{2k\lambda}(\vec n)$. The remaining steps of the proof are analogous to the centered case in \S \ref{Cent_case_part_i}.

\subsection{Centered case - part (ii)} \label{Cent_case_endpoint} 
In the case $1 \leq \beta < d$, the operator $M_{\beta}$ has a certain regularizing effect. This was observed in \cite[Theorem 3.1]{KiSa} in the continuous setting. In what follows we adapt their argument to the discrete setting. Let $\vec{n} \in \Z^d$ and assume that $M_{\beta} f(\vec{n}) \geq M_{\beta} f(\vec{n} + \vec{e}_d)$. Since we are assuming that $f \in \ell^1(\Z^d)$, there exists $r\geq 0$ such that
\begin{equation*}
M_{\beta}f(\vec n)=A_{r}f(\vec n)=\frac{1}{N(r)^{1-\frac{\beta}{d}}}{\sum_{\vec m\in \Omega_{r}}{f(\vec n+\vec m )}}.
\end{equation*}
Proceeding as in \eqref{Sec1_eq_conv_e_d} we find that 
\begin{align*}
\begin{split}
M_{\beta}f(\vec n + \vec{e}_d)&\geq\frac{1}{N(r+1)^{1-\frac{\beta}{d}}}{\sum_{\vec m\in \overline\Omega_{r+1}}{f(\vec n + \vec{e}_d +\vec m )}}\\
&\geq\frac{1}{N(r+1)^{1-\frac{\beta}{d}}}{\sum_{\vec m\in \overline\Omega_{r}}{f(\vec n +\vec m )}}.
\end{split}
\end{align*}
Hence
\begin{equation}\label{Arg_KiSa_eq1}
M_{\beta}f(\vec n) - M_{\beta}f(\vec n + \vec{e}_d) \leq \left( \frac{1}{N(r)^{1-\frac{\beta}{d}}} - \frac{1}{N(r+1)^{1-\frac{\beta}{d}}}\right) {\sum_{\vec m\in \overline\Omega_{r}}{f(\vec n +\vec m )}}.
\end{equation}
We claim that 
\begin{equation}\label{Arg_KiSa_eq2}
\left( \frac{1}{N(r)^{1-\frac{\beta}{d}}} - \frac{1}{N(r+1)^{1-\frac{\beta}{d}}}\right) \leq C\,  \frac{1}{N(r)^{1-\frac{\beta-1}{d}}}\,.
\end{equation}
This is certainly true for small $r$, whereas for large $r$ we may take $c_1$ in \eqref{Intro_Def_c1} and \eqref{Intro_Def_c1_2} to be strictly smaller than $\frac12$ and use the mean value theorem (after clearing the denominators). From \eqref{Arg_KiSa_eq1} and \eqref{Arg_KiSa_eq2} we find that 
\begin{equation*}
M_{\beta}f(\vec n) - M_{\beta}f(\vec n + \vec{e}_d) \leq C \,M_{\beta -1} f(\vec{n}).
\end{equation*}
If $M_{\beta} f(\vec{n}) < M_{\beta} f(\vec{n} + \vec{e}_d)$ an analogous argument leads to 
\begin{equation*}
M_{\beta}f(\vec n + \vec{e}_d) - M_{\beta}f(\vec n) \leq C \,M_{\beta -1} f(\vec{n} + \vec{e}_d).
\end{equation*}
Moreover, we may replace $\vec{e}_d$ by any other basis vector  $\vec{e}_j$. This leads to the following pointwise bound
\begin{equation}\label{Arg_KiSa_eq3}
\big|\nabla M_{\beta} f (\vec{n}) \big| \leq C \,\left\{M_{\beta -1} f(\vec{n}) + \sum_{j=1}^{d} M_{\beta -1} f(\vec{n} + \vec{e}_j)\right\}\,.
\end{equation}
Finally, letting $r = d/(d + \alpha -1)$ and $p^* = d/(d-1)$, we use the boundedness of $M_{\beta -1}$, interpolation and the Sobolev embedding (for which the proof in the discrete setting is analogous to the proof in the continuous setting as in \cite[Chapter V, \S 2.5]{S}) to obtain
\begin{align*}
\| \nabla M_{\beta} f \|_{\ell^q(\Z^d)} \leq C \| M_{\beta-1} f \|_{\ell^q(\Z^d)} \leq C' \| f \|_{\ell^r(\Z^d)} \leq C'  \| f \|_{\ell^{p^*}(\Z^d)}^{1 - \alpha}  \| f \|_{\ell^1(\Z^d)}^{\alpha} \leq C''  \|\nabla f \|_{\ell^{1}(\Z^d)}^{1 - \alpha}  \| f \|_{\ell^1(\Z^d)}^{\alpha}.
\end{align*}
This concludes the proof of this part.

\subsection{Uncentered case - part (ii)} The proof in this case is analogous to \S \ref{Cent_case_endpoint}, establishing the pointwise bound \eqref{Arg_KiSa_eq3} for the uncentered operator $\widetilde{M}_{\beta}$. We leave the details to the interested reader.

\section{Proof of Theorem \ref{Thm3} - Continuity} \label{Sec3}

In this section we keep working with the discrete maximal operators $M_{\beta} = M_{\Omega, \beta}$ (resp. $\wt{M}_{\beta} = \wt{M}_{\Omega, \beta}$). Given our convex set $\Omega$ as described in \S \ref{Subsec_Omega}, we fix the constants $C_\Omega = m(\Omega)$, $c_1$, $c_2$ and $\lambda$ as defined in \eqref{estimativa} - \eqref{Intro_Def_lambda}. 

\subsection{Centered case - part (i)} \label{Cont_cent_case_Part_i}
We want to show that if $f_{j}\to f$ in $\ell^{1}(\Z^{d})$ then $\nabla M_{\beta}f_{j}\to \nabla M_{\beta}f$ in $\ell^{q}(\Z^{d})$. From the fact that $\left||f_{j}|-|f|\right|\leq |f_{j}-f|$, we may assume without loss of generality that $f
_{j}\geq0$ for all $j$, and that $f\geq 0$. It suffices show that
\begin{equation}\label{Sec3_goal}
\left\|\frac{\partial}{\partial x_{i}}M_{\beta}f_{j}-\frac{\partial}{\partial x_{i}}M_{\beta}f\right\|_{\ell^{q}(\Z^{d})}\,\to\, 0
\end{equation}
 as $j\rightarrow \infty$, for each $i=1,2,\dots,d.$ As before, we will prove it for $i=d$ and the other cases are analogous.
 
\subsubsection{Convergence of radii} Given a function $g\in \ell^{1}(\Z^{d}) $ and a point  $\vec n\in \Z^{d},$ we first study the set of radii that realize the supremum in the fractional maximal function at the point $\vec n.$  Let us define 
\begin{equation*}
\RR g(\vec n)=\left\{r\in[0,\infty);\ M_{\beta}g(\vec n)=A_{r}|g|(\vec n)=\frac{1}{N(r)^{1-\frac{\beta}{d}}}\sum
_{\vec m\in\overline\Omega_{r}}|g(\vec n+\vec m)|\right\}.
\end{equation*}  
We prove the following auxiliary lemma, which can be seen as a discrete fractional analogue of a lemma of Luiro \cite[Lemma 2.2]{Lu1}.
\begin{lemma}\label{Luiro lemma}
Let $f_{j}\to f$ in $\ell^{1}(\Z^{d})$ and let $R>0$. There exists $j_{0}$ such that for $j\geq j_{0}$ we have $\RR f_{j}(\vec n)\subset \RR f(\vec n)$ for each $\vec n\in \overline B_{R}$. 
\end{lemma}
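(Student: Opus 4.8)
The plan is to exploit the fact that, for a fixed $\vec n$, the function $r\mapsto A_r g(\vec n)$ is a step function whose jump points depend neither on $\vec n$ nor on $g$, which reduces the lemma to a finite-dimensional ``$\arg\max$ is upper semicontinuous'' assertion. Since $\RR g = \RR |g|$ and $|f_j|\to|f|$ in $\ell^{1}(\Z^{d})$, we may assume $f_j,f\ge 0$; we may also assume $f\not\equiv 0$, since otherwise $\RR f(\vec n)=[0,\infty)$ for every $\vec n$ and there is nothing to prove. For $\vec p\in\Z^d$ let $\|\vec p\|_{\Omega}$ be the Minkowski gauge of $\overline\Omega$, so that $\overline\Omega_r\cap\Z^d=\{\vec p\in\Z^d:\ \|\vec p\|_{\Omega}\le r\}$; this is non-decreasing and left-continuous in $r$, and changes value only at the locally finite set of reals of the form $\|\vec p\|_{\Omega}$. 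Consequently, for any $g$ and any $\vec n$, the maps $r\mapsto N(r)$ and $r\mapsto\sum_{\vec m\in\overline\Omega_r}g(\vec n+\vec m)$, hence $r\mapsto A_r g(\vec n)$, are constant on each interval of a partition $[\rho_0,\rho_1),[\rho_1,\rho_2),\dots$ of $[0,\infty)$ with $\rho_0=0$ --- \emph{the same partition for every $\vec n$ and every $g$}.

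Next I would produce a bound on the maximizing radii, uniform in $\vec n\in\overline B_R$ and in $j$. Put $\mu:=\min_{\vec n\in\overline B_R}M_{\beta}f(\vec n)$, a minimum over finitely many strictly positive numbers (positivity holds since $f\ge0$, $f\not\equiv0$), so $\mu>0$. By subadditivity of $M_{\beta}$ and the trivial bound $M_{\beta}h(\vec n)\le\|h\|_{\ell^{1}(\Z^{d})}$ (valid since $N(r)\ge1$ and $\beta<d$), one has $|M_{\beta}f_j(\vec n)-M_{\beta}f(\vec n)|\le\|f_j-f\|_{\ell^{1}(\Z^{d})}$ for all $\vec n$. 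Fix $j_0$ with $\|f_j-f\|_{\ell^{1}(\Z^{d})}<\mu/2$ for $j\ge j_0$ and set $M_0:=\|f\|_{\ell^{1}(\Z^{d})}+\mu/2$, so that $M_{\beta}f_j(\vec n)>\mu/2$ and $\|f_j\|_{\ell^{1}(\Z^{d})}\le M_0$ on $\overline B_R$ for $j\ge j_0$. If $r$ realizes $M_{\beta}g(\vec n)$ for $g\in\{f\}\cup\{f_j\}_{j\ge j_0}$, then
\[
\tfrac{\mu}{2}<A_r g(\vec n)\le\frac{\|g\|_{\ell^{1}(\Z^{d})}}{N(r)^{1-\beta/d}}\le\frac{M_0}{N(r)^{1-\beta/d}},
\]
so $N(r)^{1-\beta/d}<2M_0/\mu$; combined with \eqref{Intro_Def_c1_2} this gives $r\le R'$ for some $R'=R'(R,\Omega,\beta,f)$ independent of $\vec n$ and of $j\ge j_0$. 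Enlarging $R'$ we may also assume $\|g\|_{\ell^{1}(\Z^{d})}/N(r)^{1-\beta/d}<\mu/2$ for all $r>R'$, which forces every maximizing radius to lie in $[0,R']$ and shows that $M_{\beta}f(\vec n)$ and $M_{\beta}f_j(\vec n)$ ($j\ge j_0$) equal the maximum of $A_r(\cdot)(\vec n)$ over the finitely many partition intervals $[\rho_i,\rho_{i+1})\subset[0,R']$. Writing $a_i(\vec n)$ and $a_i^{(j)}(\vec n)$ for the constant values of $A_r f(\vec n)$, resp. $A_r f_j(\vec n)$, on $[\rho_i,\rho_{i+1})$, we see that $\RR f(\vec n)$ (resp. $\RR f_j(\vec n)$, $j\ge j_0$) is exactly the union of the intervals $[\rho_i,\rho_{i+1})$ on which $a_i(\vec n)$ (resp. $a_i^{(j)}(\vec n)$) attains its maximum.

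Finally I would run the upper-semicontinuity-of-$\arg\max$ argument. Since $|a_i^{(j)}(\vec n)-a_i(\vec n)|\le\|f_j-f\|_{\ell^{1}(\Z^{d})}$ for all the finitely many relevant $i$ and all $\vec n\in\overline B_R$, set
\[
\delta:=\min\big\{\,M_{\beta}f(\vec n)-a_i(\vec n)\ :\ \vec n\in\overline B_R,\ [\rho_i,\rho_{i+1})\subset[0,R'],\ a_i(\vec n)\ne M_{\beta}f(\vec n)\,\big\},
\]
a minimum over a finite set of strictly positive reals (put $\delta=+\infty$ if the set is empty), so $\delta>0$. Pick $j_1\ge j_0$ with $\|f_j-f\|_{\ell^{1}(\Z^{d})}<\delta/3$ for $j\ge j_1$. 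Then for such $j$ and any $\vec n\in\overline B_R$: whenever $a_i(\vec n)\ne M_{\beta}f(\vec n)$ we get $a_i^{(j)}(\vec n)<a_i(\vec n)+\delta/3\le M_{\beta}f(\vec n)-2\delta/3<M_{\beta}f_j(\vec n)$, so $[\rho_i,\rho_{i+1})$ is not in $\RR f_j(\vec n)$. Hence every interval of $\RR f_j(\vec n)$ is one on which $a_i(\vec n)=M_{\beta}f(\vec n)$, i.e. is contained in $\RR f(\vec n)$, giving $\RR f_j(\vec n)\subset\RR f(\vec n)$ for all $j\ge j_1$ and all $\vec n\in\overline B_R$. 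The main (really the only non-routine) obstacle is the structural reduction in the first two paragraphs: recognizing that, once the maximizing radii are confined to a bounded set, the maximal function is governed by a \emph{finite} family of averages with a common $\vec n$-independent partition of the radius parameter --- this is precisely what upgrades the pointwise convergence to the uniformity in $\vec n\in\overline B_R$, which is then automatic because $\overline B_R$ contains only finitely many lattice points.
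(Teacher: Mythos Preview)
Your proof is correct and follows essentially the same strategy as the paper's: both exploit that $r\mapsto A_r f(\vec n)$ takes only finitely many values near its supremum, isolate a positive gap between $M_\beta f(\vec n)$ and the sub-maximal averages uniformly over the finitely many $\vec n\in\overline B_R$, and then use uniform convergence of the averages to conclude. The paper is slightly more economical---it obtains the uniform bound $|A_r f_j(\vec n)-A_r f(\vec n)|\le\|f_j-f\|_{\ell^{d/\beta}(\Z^d)}$ directly via H\"older's inequality, which bypasses your explicit radius cutoff $R'$---but the argument is the same in substance (one small slip: the map $r\mapsto\overline\Omega_r\cap\Z^d$ is right-continuous, not left-continuous, though this is irrelevant since you only use constancy on the intervals $[\rho_i,\rho_{i+1})$).
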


\begin{proof}
Fix $\vec n\in \overline B_{R}$ and consider the map $r \mapsto A_{r} f(\vec n)$. Since $f \in \ell^1(\Z^d)$, there is only a finite number of values in the image set $\{A_{r} f(\vec n); r \geq 0\}$ such that $A_{r} f(\vec n) \geq \tfrac{1}{2} M_{\beta}f (\vec{n})$. Therefore, there exists $\varepsilon(\vec{n})>0$ such that, if $A_{r} f(\vec n) > M_{\beta}f (\vec{n}) - \varepsilon(\vec{n})$, then $A_{r} f(\vec n) = M_{\beta}f (\vec{n})$ and $r \in \RR f(\vec n)$. Let 
\begin{equation*}
\varepsilon = \tfrac{1}{3}\min\big\{\varepsilon(\vec{n}); \  \vec n \in \overline B_{R}\big\}.
\end{equation*}
From \eqref{Intro_inclusion} we see that $\|f_j - f\|_{\ell^{1}(\Z^{d})} \to 0$ implies that $\|f_j - f\|_{\ell^{a}(\Z^{d})} \to 0$ for each $a \geq 1$. In particular, there exists $j_0$ such that $\|f_j-f\|_{\ell^{\frac{d}{\beta}}(\Z^{d})}<\varepsilon$ for all $j\geq j_{0}$ (if $\beta = 0$ then $d/\beta = \infty$). By H\"{o}lder's inequality,
\begin{align}\label{Sec3_eq0}
\begin{split}
\big|A_{r}f_{j}(\vec n)-A_{r}f(\vec n)\big|&\leq \frac{1}{N(r)^{1-\frac{\beta}{d}}}\sum_{\vec m\in\overline \Omega_{r}}\big|f_{j}(\vec n+\vec m)-f(\vec n+\vec m)\big|\\
&\leq \frac{1}{N(r)^{1-\frac{\beta}{d}}}\left(\sum_{\vec m\in \overline\Omega_{r}}\big|f_j{(\vec n+\vec m)}-f(\vec n+\vec m)\big|^{\frac{d}{\beta}} \right)^{\frac{\beta}{d}}N(r)^{1-\frac{\beta}{d}}\\
&\leq \|f_{j}-f\|_{\ell^{\frac{d}{\beta}}(\Z^{d})}\\
&\leq \varepsilon 
\end{split}
\end{align}
for all $r\geq 0$ and $j\geq j_{0}$. Hence, for any $\vec n\in \overline B_{R}$ and $j \geq j_0$, if we take $s \in \RR f(\vec n)$ we get
\begin{equation}\label{Sec3_eq1}
M_{\beta}f (\vec{n}) = A_{s} f(\vec n) = A_{s} f_j(\vec n) + A_{s} (f- f_j)(\vec n) \leq M_{\beta}f_j (\vec{n}) + \varepsilon.
\end{equation}
For any $\vec n\in \overline B_{R}$, $j \geq j_0$ and $r_j \in \RR f_j(\vec n)$, we use \eqref{Sec3_eq0} and \eqref{Sec3_eq1} to obtain
\begin{equation*}
A_{r_j} f(\vec n) = A_{r_j} (f - f_j)(\vec n) + A_{r_j} f_j(\vec n) \geq M_{\beta}f (\vec{n}) - 2\varepsilon.
\end{equation*}
From the definition of $\varepsilon$ we must have $r_j \in \RR f(\vec n)$, which completes the proof of the lemma.
\end{proof}

\subsubsection{Brezis-Lieb reduction} For a fixed $\vec n \in \Z^d$, given $\varepsilon >0$, we use Lemma \ref{Luiro lemma} to find $j_0$ such that, if $j \geq j_0$, we have $\RR f_{j}(\vec n)\subset \RR f(\vec n)$ and $\|f_j-f\|_{\ell^{\frac{d}{\beta}}(\Z^{d})}<\varepsilon$. Taking $r_j \in \RR f_{j}(\vec n)$ and using \eqref{Sec3_eq0} we get
\begin{equation*}
|M_{\beta}f_j(\vec n)-M_{\beta}f(\vec n)|=|A_{r_{j}}f_j(\vec n)-A_{r_{j}}f(\vec n)|\leq \varepsilon
\end{equation*}
for $j \geq j_0$, and therefore  $M_{\beta}f_{j}(\vec n)\to M_{\beta}f(\vec n)$ as $j\to \infty$. Hence it follows that
\begin{equation}\label{point conv}
\frac{\partial}{\partial x_{d}}M_{\beta}f_{j}(\vec n)\rightarrow\frac{\partial}{\partial x_{d}} M_{\beta}f(\vec n)
\end{equation} 
as $j\to\infty$. From the classical Brezis-Lieb lemma \cite{BL}, in order to prove \eqref{Sec3_goal} it suffices to prove the convergence of the norms, i.e.
\begin{equation*}
\lim_{j\to\infty}\left\|\frac{\partial}{\partial x_{d}}M_{\beta}f_{j}\right\|_{\ell^{q}(\Z^{d})}=\,\left\|\frac{\partial}{\partial x_{d}}M_{\beta}f\right\|_{\ell^{q}(\Z^{d})}.
\end{equation*}

\smallskip

From Fatou's lemma and the pointwise convergence in \eqref{point conv} we have
\begin{equation*}
\left\|\frac{\partial}{\partial x_{d}}M_{\beta}f\right\|_{\ell^{q}(\Z^{d})}\,\leq\, \liminf_{j\to\infty}\left\|\frac{\partial}{\partial x_{d}}M_{\beta}f_{j}\right\|_{\ell^{q}(\Z^{d})}.
\end{equation*} 
We prove the opposite inequality in the next subsection.

\subsubsection{Upper bound} Let $\varepsilon_0 >0$ be given. Our goal now is to find $j_0$ such that, for $j \geq j_0$, 
\begin{equation}\label{Sec3_Goal}
\left\|\frac{\partial}{\partial x_{d}}M_{\beta}f_{j}\right\|_{\ell^{q}(\Z^{d})} \leq \left\|\frac{\partial}{\partial x_{d}}M_{\beta}f\right\|_{\ell^{q}(\Z^{d})} + \varepsilon_0.
\end{equation} 
Let $R$ be a sufficiently large radius (to be properly chosen later) and let $Q_{2R}=\{\vec x\in\R^{d}:\|\vec x\|_{\infty}\leq 2R\}$ be the cube of side $4R$. We write
\begin{align}\label{Sec3_initial_split}
\begin{split}
\left\|\frac{\partial}{\partial x_{d}}M_{\beta}f_j\right\|_{\ell^{q}(\Z^{d})}^q & = \sum_{\|\vec n\|_{\infty} \leq 2R}  \left|\frac{\partial}{\partial x_{d}}M_{\beta}f_j(\vec n)\right|^q + \sum_{\|\vec n\|_{\infty} > 2R}  \left|\frac{\partial}{\partial x_{d}}M_{\beta}f_j(\vec n)\right|^q\\& =: S_1 + S_2.
\end{split}
\end{align}
We bound $S_1$ and $S_2$ separately. 

\smallskip

Let $\varepsilon_{1}>0$ (to be properly chosen later). By Lemma \ref{Luiro lemma}, there exists $j_{1}$ such that, if $j \geq j_1$, we have $\RR f_{j}(\vec n)\subset \RR f(\vec n)$ for each $\vec n$ with $\|\vec n\|_{\infty}\leq 2R+1$ and
 \begin{equation*}
 \|f_{j}-f\|_{\ell^{\frac{d}{\beta}}(\Z^{d})}\leq \varepsilon_{1}.
 \end{equation*}
By \eqref{Sec3_eq0} we obtain that
\begin{equation*}
\left|\frac{\partial}{\partial x_{d}}M_{\beta}f_{j}(\vec n)-\frac{\partial}{\partial x_{d}}M_{\beta}f(\vec n)\right|\leq 2\varepsilon_{1},
\end{equation*}
for any $\vec n\in Q_{2R}$. Hence, by the triangle inequality,
 \begin{equation}\label{Sec3_bound_S_1}
S_1 \leq \left(\left\| \frac{\partial}{\partial x_{d}}M_{\beta}f\right\|_{\ell^{q}(Q_{2R})}+2\varepsilon_{1}(4R+1)^{\frac{d}{q}}\right)^q.
 \end{equation}
 
 \smallskip
 
 In order to bound $S_2$, we recall the definition of the sets $X_k^{+}$ and $X_k^{-}$ in \eqref{Def_X_n_+} and \eqref{Def_X_n_-}. We then write
 \begin{align*}
 S_2 & =  \sum_{k\geq0}\sum_{\substack{\|\vec n \|_{\infty} > 2R \\ \vec n \in X^{+}_{k}}}{\big(M_{\beta} f_j(\vec n + \vec{e}_d)- M_{\beta} f_j(\vec n)}\big)^{q} + \sum_{k\geq0}\sum_{\substack{\|\vec n \|_{\infty} > 2R \\ \vec n \in X^{-}_{k}}}{\big(M_{\beta} f_j(\vec n)- M_{\beta} f_j(\vec n + \vec{e}_d)}\big)^{q}\\
 & =: S_2^{+} + S_2^{-}.
 \end{align*}
From the calculations leading to \eqref{Sec2_Important_estimate} we have
\begin{align*}
\begin{split}
S_2^{-} &\  \leq  \ \|\nabla f_j\|_{\ell^1(\Z^d)}^{(q-1)(1-\alpha)} \left[ \sum_{\vec m\in \Z^d}  \frac{1}{N^{+}\big(\frac{|\vec{m}|}{\lambda}-1\big)^{1+\gamma}}\sum_{k \geq \frac{|\vec m|}{\lambda}}\sum_{\substack{\|\vec n \|_{\infty} > 2R \\ \vec n \in X^{-}_{k}}} \big|f_j(\vec n+\vec m )-f_j(\vec n + \vec{e}_d +\vec m)\big| \right]^{1-\alpha}\\
& \ \ \ \ \ \ \ \ \ \ \ \ \ \ \ \ \  \ \ \ \ \ \ \times \| f_j\|_{\ell^1(\Z^d)}^{(q-1)\alpha} \left[ \sum_{\vec m\in \Z^d} \frac{C}{N^{+}\big( \frac{|\vec m|}{\lambda} -1\big)^{1+\gamma}} \sum_{k \geq \frac{|\vec m|}{\lambda}} \sum_{\substack{\|\vec n \|_{\infty} > 2R \\ \vec n \in X^{-}_{k}}}f_j(\vec n+\vec m)\right]^{\alpha}.
\end{split}
\end{align*}
We have already noted after \eqref{ineq holder_with_q-1_v4} that the sum $\sum_{\vec m\in \Z^d}  \frac{1}{N^{+}\big(\frac{|\vec{m}|}{\lambda}-1\big)^{1+\gamma}}$ is finite. We now consider two cases: (i) when $\|\vec m \|_{\infty} < R$, which implies that $\vec n + \vec m \in Q_R^{c}$; (ii) $\|\vec m \|_{\infty} \geq R$. We define the function
\begin{equation*}
h(R) = \sum_{\|\vec m\|_{\infty} \geq R} \frac{1}{N^{+}\big(\frac{|\vec{m}|}{\lambda}-1\big)^{1+\gamma}}\,,
\end{equation*}
and obtain the following upper bound (recall that the sets $X_k^{-}$ are pairwise disjoint)
\begin{align}\label{Sec3_bound_for_S_2_-}
\begin{split}
S_2^{-} &\  \leq  \ \|\nabla f_j\|_{\ell^1(\Z^d)}^{(q-1)(1-\alpha)}\left( C \,\|\nabla f_j\|_{\ell^1(Q_R^{c})} + h(R) \|\nabla f_j\|_{\ell^1(\Z^d)}\right)^{(1-\alpha)} \\
&  \ \ \ \ \ \ \ \ \ \ \ \ \ \ \ \ \ \  \times  \|f_j\|_{\ell^1(\Z^d)}^{(q-1)(\alpha)}\left( C \,\| f_j\|_{\ell^1(Q_R^{c})} + h(R) \| f_j\|_{\ell^1(\Z^d)}\right)^{\alpha}.
\end{split}
\end{align}
The same bound as in \eqref{Sec3_bound_for_S_2_-} holds for $S_2^{+}$. Putting together \eqref{Sec3_initial_split}, \eqref{Sec3_bound_S_1} and \eqref{Sec3_bound_for_S_2_-} (this last one duplicated, for $S_2^{-}$ and for $S_2^{+}$) we obtain 
\begin{align*}
\left\|\frac{\partial}{\partial x_{d}}M_{\beta}f_{j}\right\|_{\ell^{q}(\Z^{d})}^q & \leq \left(\left\| \frac{\partial}{\partial x_{d}}M_{\beta}f\right\|_{\ell^{q}(Q_{2R})}+2\varepsilon_{1}(4R+1)^{\frac{d}{q}}\right)^q \\
&  \ \ \ \ \ \ \ \ \ +  2  \left\{\|\nabla f_j\|_{\ell^1(\Z^d)}^{(q-1)(1-\alpha)}\left( C \,\|\nabla f_j\|_{\ell^1(Q_R^{c})} + h(R) \|\nabla f_j\|_{\ell^1(\Z^d)}\right)^{(1-\alpha)} \right.\\
&  \ \ \ \ \ \ \ \ \ \ \ \ \ \ \ \ \ \  \left. \times   \|f_j\|_{\ell^1(\Z^d)}^{(q-1)\alpha}\left( C \,\| f_j\|_{\ell^1(Q_R^{c})} + h(R) \| f_j\|_{\ell^1(\Z^d)}\right)^{\alpha}\right\}.
\end{align*}
The crucial point in our argument is the fact that $h(R) \to 0$ as $R \to \infty$. It is now clear that we can choose $R$ sufficiently large to make the second term above very small (for $j$ large), and then we choose $\varepsilon_1$ sufficiently small to arrive at \eqref{Sec3_Goal}. This completes the proof in the centered case.

\subsection{Uncentered case - part (i)} \label{Cont_uncent_case_partI_sectionline} Minor modifications are needed in comparison to the previous argument. For a function $g\in \ell^{1}(\Z^{d})$ and a point $\vec n\in\Z^{d}$, we now define 
\begin{equation*}
\widetilde{\RR}g(\vec n)=\left\{(\vec x,r)\in \R^{d}\times\R^{+}; \, \widetilde M_{\beta}\,g(\vec n)=A_{(\vec x,r)}|g|(\vec n)=\frac{1}{N(\vec x,r)^{1-\frac{\beta}{d}}}\sum_{\vec m\in\overline\Omega_{r}(\vec x)} |g(\vec m)|\right\}.
\end{equation*}
The following lemma can be proved with the same ideas used in the proof of Lemma \ref{Luiro lemma}.
\begin{lemma}
Let $f_{j}\to f$ in $\ell^{1}(\Z^{d})$ and let $R>0$. There exists $j_{0}$ such that for $j\geq j_{0}$ we have $\wt{\RR} f_{j}(\vec n)\subset \wt{\RR} f(\vec n)$ for each $\vec n\in \overline B_{R}$. 
\end{lemma}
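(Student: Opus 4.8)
The plan is to mimic the proof of Lemma~\ref{Luiro lemma} almost verbatim, the only genuinely new ingredient being a finiteness statement that compensates for the fact that the one-parameter family of radii is now replaced by the two-parameter family of admissible pairs $(\vec x,r)$ (those with $\vec n\in\overline\Omega_{r}(\vec x)$). As in Lemma~\ref{Luiro lemma}, using $\big||f_{j}|-|f|\big|\le|f_{j}-f|$ we may assume $f\ge 0$ and $f_{j}\ge 0$ for all $j$. If $f\equiv 0$ the claim is trivial, since then $\widetilde{\RR}f(\vec n)$ is the entire set of admissible pairs; so assume $f\not\equiv 0$, which gives $\widetilde M_{\beta}f(\vec n)>0$ for every $\vec n\in\Z^{d}$ (pick $\vec m_{0}$ with $f(\vec m_{0})>0$ and any $\Omega$-ball containing both $\vec n$ and $\vec m_{0}$). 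Recall also that, since $f\in\ell^{1}(\Z^{d})$, the supremum defining $\widetilde M_{\beta}f(\vec n)$ is attained, i.e. $\widetilde{\RR}f(\vec n)\ne\emptyset$.

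The crucial step is the following finiteness statement: for each fixed $\vec n$, the set of attained values $\big\{A_{(\vec x,r)}f(\vec n)\,;\,\vec n\in\overline\Omega_{r}(\vec x)\big\}\cap\big[\tfrac12\widetilde M_{\beta}f(\vec n),\infty\big)$ is finite. Indeed, if $A_{(\vec x,r)}f(\vec n)=\frac{1}{N(\vec x,r)^{1-\frac{\beta}{d}}}\sum_{\vec m\in\overline\Omega_{r}(\vec x)}f(\vec m)\ge\tfrac12\widetilde M_{\beta}f(\vec n)$, then bounding the numerator by $\|f\|_{\ell^{1}(\Z^{d})}$ forces $N(\vec x,r)\le C(f,\vec n)$; by \eqref{Intro_Def_c1_2} this in turn forces $r\le r_{0}$ for some $r_{0}=r_{0}(f,\vec n)$. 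Since $\vec n\in\overline\Omega_{r}(\vec x)$, \eqref{Intro_Def_lambda} gives $|\vec n-\vec x|\le\lambda r_{0}$, and hence every lattice point $\vec m\in\overline\Omega_{r}(\vec x)$ satisfies $|\vec m-\vec n|\le 2\lambda r_{0}$, so all such $\vec m$ lie in the finite set $\overline B_{|\vec n|+2\lambda r_{0}}\cap\Z^{d}$. Consequently the numerator can take only finitely many values (one per subset of that finite set) and $N(\vec x,r)$ only finitely many integer values, which proves the claim. This is precisely the place where the uncentered case demands more than the centered one, and it is the main (and essentially the only) obstacle: in Lemma~\ref{Luiro lemma} the averages form a one-parameter family so finiteness was immediate, whereas here one must localize geometrically, a large average forcing a small $\Omega$-ball which traps all contributing lattice points in a fixed finite region.

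Granted this, the remaining argument is identical to that of Lemma~\ref{Luiro lemma}. For each $\vec n\in\overline B_{R}$ pick $\varepsilon(\vec n)>0$ so that $A_{(\vec x,r)}f(\vec n)>\widetilde M_{\beta}f(\vec n)-\varepsilon(\vec n)$ implies $(\vec x,r)\in\widetilde{\RR}f(\vec n)$, and set $\varepsilon=\tfrac13\min\{\varepsilon(\vec n)\,;\,\vec n\in\overline B_{R}\}>0$. Exactly as in \eqref{Sec3_eq0}, Hölder's inequality on the $N(\vec x,r)$ points of $\overline\Omega_{r}(\vec x)$ gives $\big|A_{(\vec x,r)}f_{j}(\vec n)-A_{(\vec x,r)}f(\vec n)\big|\le\|f_{j}-f\|_{\ell^{d/\beta}(\Z^{d})}$ for every admissible $(\vec x,r)$, and by \eqref{Intro_inclusion} there is $j_{0}$ with $\|f_{j}-f\|_{\ell^{d/\beta}(\Z^{d})}<\varepsilon$ for $j\ge j_{0}$. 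Then, for $j\ge j_{0}$ and $\vec n\in\overline B_{R}$: evaluating at a pair in $\widetilde{\RR}f(\vec n)$ yields $\widetilde M_{\beta}f(\vec n)\le\widetilde M_{\beta}f_{j}(\vec n)+\varepsilon$, and for any $(\vec x_{j},r_{j})\in\widetilde{\RR}f_{j}(\vec n)$ we get $A_{(\vec x_{j},r_{j})}f(\vec n)\ge A_{(\vec x_{j},r_{j})}f_{j}(\vec n)-\varepsilon=\widetilde M_{\beta}f_{j}(\vec n)-\varepsilon\ge\widetilde M_{\beta}f(\vec n)-2\varepsilon>\widetilde M_{\beta}f(\vec n)-\varepsilon(\vec n)$, whence $(\vec x_{j},r_{j})\in\widetilde{\RR}f(\vec n)$. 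This gives $\widetilde{\RR}f_{j}(\vec n)\subset\widetilde{\RR}f(\vec n)$ for all $\vec n\in\overline B_{R}$ and all $j\ge j_{0}$, as desired.
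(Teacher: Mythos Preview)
Your proof is correct and follows exactly the route the paper indicates (``the same ideas used in the proof of Lemma~\ref{Luiro lemma}''). The one place where the uncentered case genuinely requires an extra word is the finiteness of the set of large average values, since the admissible $(\vec x,r)$ now form a two-parameter family; your localization argument (bounding $N(\vec x,r)$, hence $r$, hence trapping all contributing lattice points in a fixed finite set around $\vec n$) is precisely the natural way to supply this detail, and the remainder of the argument is then identical to the centered case.
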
 
\noindent Once we have adjusted this auxiliary lemma, the remaining steps of the proof are analogous to the centered case.

\subsection{Centered case - part (ii)}\label{Cont_cent_case_partII_sectionline} The proof follows along similar lines to \S \ref{Cont_cent_case_Part_i} and we only present the minor modifications needed. Observe that Lemma \ref{Luiro lemma} continues to hold \footnote{In fact, note that all we need to establish Lemma \ref{Luiro lemma} is that $f \in \ell^s(\Z^d)$ and $f_j \to f$ in $\ell^s(\Z^d)$ for some $1\leq s < d/\beta$.} and we may arrive at \eqref{Sec3_initial_split} in the same way. We bound $S_1$ just as we did in \eqref{Sec3_bound_S_1}. To bound $S_2$ we need a different argument, since $\gamma$ would be zero in this case. In fact, we use the pointwise estimate \eqref{Arg_KiSa_eq3} to get directly
\begin{equation}\label{Cont_cent_case_part_ii}
S_2 = \sum_{\|\vec n\|_{\infty} > 2R}  \left|\frac{\partial}{\partial x_{d}}M_{\beta}f_j(\vec n)\right|^q \leq C \sum_{\|\vec n\|_{\infty} \geq 2R}  \big|M_{\beta-1}f_j(\vec n)\big|^q.
\end{equation}
Since $f_j \to f$ in $\ell^1(\Z^d)$ we have $f_j \to f$ in $\ell^r(\Z^d)$ for $r = d/(d + \alpha -1)$. Since $M_{\beta -1}: \ell^r(\Z^d) \to \ell^q(\Z^d)$ is bounded and continuous we have $M_{\beta -1} f_j \to M_{\beta -1} f$ in $\ell^q(\Z^d)$. We can then choose $R$ sufficiently large to make the right-hand side of \eqref{Cont_cent_case_part_ii} very small (for $j$ large). The rest of the proof is analogous to \S \ref{Cont_cent_case_Part_i}.

\subsection{Uncentered case - part (ii)} The proof is essentially analogous to \S \ref{Cont_cent_case_Part_i}, \S \ref{Cont_uncent_case_partI_sectionline} and \S \ref{Cont_cent_case_partII_sectionline}. One just has to use the analogue of \eqref{Arg_KiSa_eq3} to the uncentered operator to bound $S_2$ as in \eqref{Cont_cent_case_part_ii}. We omit the details.

\section{Proof of Theorem \ref{Thm2}} \label{Sec4}

In this section we work with the one-dimensional discrete uncentered maximal operator defined in \eqref{Intro_Def_Disc_Max_oper_dim1}. To prove Theorem \ref{Thm2} we may assume without loss of generality that $f\geq0$. For $n\in\Z$ and $r,s \in \Z^+ \times \Z^+$ we define the fractional average 
$$
A_{r,s}f(n)=\frac{1}{(r+s+1)^{1-\beta}}\sum_{k=-r}^{s}f(n+k).
$$
We start with the following preliminary lemma.

\begin{lemma}\label{Sec4_Lem6}
Let $f:\Z \to \R^+$ be a bounded function such that $\wt{M}_{\beta}f \not\equiv \infty$. 
\begin{enumerate}
\item[(i)] We have $\wt{M}_{\beta}f(n) <\infty$ for all $n \in \Z$.

\smallskip

\item[(ii)] If $\wt{M}_{\beta}f(n)$ is not attained by any average $A_{r,s}f(n)$ with $r,s \in \Z^+ \times \Z^+$, then 
\begin{equation}\label{Sec4_lem6_eq1}
\wt{M}_{\beta}f(m) \geq \wt{M}_{\beta}f(n)
\end{equation}
for all $m \in \Z$. 
\end{enumerate}
\end{lemma}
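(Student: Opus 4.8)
The plan is to obtain both parts from one elementary device: \emph{enlarging an interval so that it captures a prescribed point, while keeping the growth of its denominator under control}. It is convenient to rephrase the operator geometrically: for a finite integer interval $W$ write $|W|$ for its number of lattice points and $S_f(W)=\sum_{k\in W}f(k)$, so that, in the notation of \eqref{Intro_Def_Disc_Max_oper_dim1},
\[
\wt{M}_{\beta}f(n)=\sup\Big\{\,|W|^{\beta-1}S_f(W)\ :\ n\in W\,\Big\},
\]
the supremum running over all finite integer intervals $W$ containing $n$. Since $f\ge 0$, we have $S_f(W)\le S_f(W')$ whenever $W\subseteq W'$; and if $n\in W$ and $m\in\Z$, then the smallest interval $W^*\supseteq W\cup\{m\}$ satisfies $|W|\le|W^*|\le|W|+|m-n|$, because $n$ already belongs to $W$. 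This extension inequality, together with $1-\beta>0$, is essentially the entire engine of the proof.

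For part (i), I would pick a point $n_0$ with $\wt{M}_{\beta}f(n_0)<\infty$ --- one exists precisely because $\wt{M}_{\beta}f\not\equiv\infty$. Given an arbitrary $n\in\Z$ and an arbitrary interval $W\ni n$, apply the extension with $m=n_0$ to get $W^*\ni n_0$, and estimate
\[
\frac{S_f(W)}{|W|^{1-\beta}}\ \le\ \frac{S_f(W^*)}{|W^*|^{1-\beta}}\Big(\frac{|W^*|}{|W|}\Big)^{1-\beta}\ \le\ \big(1+|n-n_0|\big)^{1-\beta}\,\wt{M}_{\beta}f(n_0),
\]
using $|W^*|/|W|\le 1+|n-n_0|/|W|\le 1+|n-n_0|$ and $n_0\in W^*$. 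Taking the supremum over $W\ni n$ gives $\wt{M}_{\beta}f(n)\le(1+|n-n_0|)^{1-\beta}\,\wt{M}_{\beta}f(n_0)<\infty$, which is (i).

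For part (ii), fix $n$ with the stated property and an arbitrary $m\in\Z$; by (i) we have $L:=\wt{M}_{\beta}f(n)<\infty$, so one can choose intervals $W_j\ni n$ with $|W_j|^{\beta-1}S_f(W_j)\to L$. The crucial claim is that $|W_j|\to\infty$: for each fixed $T$ there are only finitely many intervals through $n$ of length $\le T$, so the maximum of $|W|^{\beta-1}S_f(W)$ over them equals $A_{r,s}f(n)$ for some $(r,s)$ and hence --- as the supremum is \emph{not} attained by any finite average --- is strictly less than $L$; consequently a maximizing sequence must eventually leave every such finite collection, i.e. $|W_j|\to\infty$. Now let $W_j^*$ be the smallest interval containing $W_j\cup\{m\}$; then $m\in W_j^*$, $S_f(W_j^*)\ge S_f(W_j)$, and $1\le|W_j^*|/|W_j|\le 1+|m-n|/|W_j|\to 1$, so
\[
\wt{M}_{\beta}f(m)\ \ge\ \frac{S_f(W_j^*)}{|W_j^*|^{1-\beta}}\ \ge\ \Big(\frac{|W_j|}{|W_j^*|}\Big)^{1-\beta}\,\frac{S_f(W_j)}{|W_j|^{1-\beta}}
\]
for every $j$; letting $j\to\infty$, the right-hand side tends to $L=\wt{M}_{\beta}f(n)$, which is \eqref{Sec4_lem6_eq1}.

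The argument has one mild subtlety, which is where I expect the (small) difficulty to lie: the verification that a maximizing sequence for an unattained supremum must have unbounded window sizes --- this rests on there being only finitely many intervals of bounded length through a given point, so that a bounded maximizing sequence would actually realize the supremum. Everything else is the extension inequality $|W^*|\le|W|+|m-n|$ and the continuity of $t\mapsto t^{1-\beta}$ at $t=1$. Boundedness of $f$ plays no essential role here beyond guaranteeing, together with $f\ge 0$, that $\wt{M}_{\beta}f\not\equiv\infty$ is the appropriate finiteness hypothesis to propagate.
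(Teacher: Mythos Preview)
Your proof is correct, and it takes a route that differs from the paper's in a noteworthy way. The paper's argument \emph{translates} the window: for a near-maximizing pair $(r_j,s_j)$ at $n$ it compares $A_{r_j,s_j}f(m)$ with $A_{r_j,s_j}f(n)$ and controls the discrepancy by the $\ell^\infty$-bound,
\[
A_{r_j,s_j}f(m)\ \ge\ A_{r_j,s_j}f(n)\ -\ \frac{2\|f\|_{\ell^\infty}|m-n|}{(r_j+s_j+1)^{1-\beta}},
\]
which is why the hypothesis that $f$ is bounded is genuinely used there. You instead \emph{enlarge} the window to swallow the target point and rely only on $f\ge 0$ via the monotonicity $S_f(W)\le S_f(W^*)$, together with the length estimate $|W^*|\le|W|+|m-n|$. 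Both approaches need the observation that a maximizing (or blowing-up) sequence must have unbounded window lengths, which you justify cleanly via the finiteness of the set of intervals of bounded length through a fixed point. Your version has the pleasant feature that the core inequalities never invoke $\|f\|_{\ell^\infty}$; boundedness of $f$ survives only as a background hypothesis ensuring finiteness somewhere. The paper's version is slightly quicker to write because the same displayed inequality handles (i) and (ii) simultaneously, but your extension device is just as short and arguably more robust.
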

\begin{proof}
(i) If there is $n \in \Z$ such that $\wt{M}_{\beta}f(n) =\infty$, there exists a sequence $\{r_j, s_j\}$ in $\Z^+ \times \Z^+$, with $r_j + s_j \to \infty$ such that $A_{r_j,s_j}f(n) \to \infty$ as $j \to \infty$. For each $m \in \Z$, letting $C = \|f\|_{\ell^{\infty}(\Z)}$ we have
\begin{equation}\label{Sec4_lem6_eq2}
A_{r_j,s_j}f(m) \geq A_{r_j,s_j}f(n) - \frac{2 C |m-n|}{(r_j+s_j+1)^{1-\beta}}\,,
\end{equation}
which implies that $\wt{M}_{\beta}f(m) =\infty$, a contradiction.

\smallskip

\noindent(ii) If $\wt{M}_{\beta}f(n)$ is not attained, there exists a sequence $\{r_j, s_j\}$ in $\Z^+ \times \Z^+$, with $r_j + s_j \to \infty$ such that $A_{r_j,s_j}f(n) \to \wt{M}_{\beta}(n)$ as $j \to \infty$. The inequality \eqref{Sec4_lem6_eq1} plainly follows from \eqref{Sec4_lem6_eq2}.
\end{proof}

The next lemma is the heart of the proof. It bounds the $q$-variation of $\wt{M}_{\beta}f$ in a monotone interval by the variation of $f$ in a comparable interval.
\begin{lemma}\label{Lem7_crucial}
Let $f:\Z \to \R^+$ be a function of bounded variation such that $\wt{M}_{\beta}f$ is non-constant $($in particular, $\wt{M}_{\beta}f \not\equiv \infty$$)$.
\begin{enumerate}
\item[(i)] Let $a < b$ be integers such that $\wt{M}_{\beta}f$ is non-increasing in $[a,b]$, with $\wt{M}_{\beta}f(a) > \wt{M}_{\beta}f(a+1)$. Let $r$ be the smallest nonnegative integer such that $\wt{M}_{\beta}f(a) = A_{r,0}f(a)$. Then we have
\begin{align}\label{Sec5_Lem_novo_eq1}
\sum_{n=a}^{b-1}\big|\widetilde M_{\beta}f(n)-\widetilde M_{\beta}f(n+1)\big|^{q} \leq 2\,\| f'\|_{\ell^{1}(\Z)}^{q-1}\sum_{n=a-r}^{b-1}|f(n)-f(n+1)|.
\end{align}

\item[(ii)] Let $a < b$ be integers such that $\wt{M}_{\beta}f$ is non-decreasing in $[a,b]$, with $\wt{M}_{\beta}f(b-1) < \wt{M}_{\beta}f(b)$. Let $s$ be the smallest nonnegative integer such that $\wt{M}_{\beta}f(b) = A_{0,s}f(b)$. Then we have
\begin{align}\label{Sec5_Lem_novo_eq2}
\sum_{n=a}^{b-1}\big|\widetilde M_{\beta}f(n)-\widetilde M_{\beta}f(n+1)\big|^{q} \leq 2\,\| f'\|_{\ell^{1}(\Z)}^{q-1}\sum_{n=a}^{b+s-1}|f(n)-f(n+1)|.
\end{align}
\end{enumerate}
\end{lemma}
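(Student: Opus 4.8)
The plan is to reduce everything to part (i) and then run a descent‑index analysis adapted to the fractional normalization. Part (ii) will follow from part (i) by the reflection $f(n)\mapsto f(-n)$, which interchanges the left and right radii $r,s$, the notions of non‑increasing and non‑decreasing, and the inequalities \eqref{Sec5_Lem_novo_eq1} and \eqref{Sec5_Lem_novo_eq2}. Since $f$ is of bounded variation it is bounded, so Lemma \ref{Sec4_Lem6} is available. For part (i), first observe that only the \emph{descent indices} $n\in[a,b-1]$, i.e.\ those with $\widetilde M_\beta f(n)>\widetilde M_\beta f(n+1)$, contribute to the left‑hand side of \eqref{Sec5_Lem_novo_eq1}. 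At such an index the supremum defining $\widetilde M_\beta f(n)$ is attained, by Lemma \ref{Sec4_Lem6}(ii) (as $n$ is not a global minimum of $\widetilde M_\beta f$), and in fact it is attained by a \emph{purely left‑sided} average: if $\widetilde M_\beta f(n)=A_{\rho,\sigma}f(n)$ with $\sigma\geq1$, then the same block realizes $A_{\rho+1,\sigma-1}f(n+1)=A_{\rho,\sigma}f(n)$, forcing $\widetilde M_\beta f(n+1)\geq\widetilde M_\beta f(n)$, a contradiction. Hence $\widetilde M_\beta f(n)=A_{r_n,0}f(n)$ for a smallest integer $r_n\geq0$, with realizing block $[n-r_n,\,n]$; at $n=a$ this $r_n$ is exactly the integer $r$ of the statement.

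For a descent index $n$ I would then test at the point $n+1$ the two blocks adjacent to $[n-r_n,n]$: the \emph{shifted} block $[n-r_n+1,\,n+1]$, of the same length, and the \emph{grown} block $[n-r_n,\,n+1]$, one unit longer. The shifted block gives
\[
\widetilde M_\beta f(n)-\widetilde M_\beta f(n+1)\ \leq\ \frac{f(n-r_n)-f(n+1)}{(r_n+1)^{1-\beta}}\ \leq\ \frac{1}{(r_n+1)^{1-\beta}}\sum_{l=n-r_n}^{n}|f(l)-f(l+1)|\,,
\]
and the grown block, after Bernoulli's inequality $\big(\tfrac{r_n+1}{r_n+2}\big)^{1-\beta}\geq1-\tfrac{1-\beta}{r_n+2}$, gives
\[
\widetilde M_\beta f(n)-\widetilde M_\beta f(n+1)\ \leq\ \frac{(1-\beta)\,\widetilde M_\beta f(n)}{r_n+2}\,.
\]
Both right‑hand sides are nonnegative and each is at most $\|f'\|_{\ell^1(\Z)}$; the key algebraic fact is that $(1-\beta)q=1$, so raising either estimate to the $q$‑th power turns the normalization $(r_n+1)^{1-\beta}$ into $r_n+1$. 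Combining the first estimate with $\widetilde M_\beta f(n)-\widetilde M_\beta f(n+1)\leq\|f'\|_{\ell^1(\Z)}$ gives, in the regime of small $r_n$,
\[
\big(\widetilde M_\beta f(n)-\widetilde M_\beta f(n+1)\big)^q\ \leq\ \frac{\|f'\|_{\ell^1(\Z)}^{\,q-1}}{r_n+1}\sum_{l=n-r_n}^{n}|f(l)-f(l+1)|\,,
\]
while the second estimate, since $\widetilde M_\beta f(n)^q=(r_n+1)^{-1}\big(\sum_{j=n-r_n}^n f(j)\big)^q$, controls $\big(\widetilde M_\beta f(n)-\widetilde M_\beta f(n+1)\big)^q$ by a quantity decaying like $(r_n+1)^{-(q+1)}$ along a descent run — exactly the rate one needs, since $\sum_{m\geq1}m^{-(q+1)}\leq\pi^2/6<2$.

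The last step is to sum over the descent indices $n\in[a,b-1]$, interchange the order of summation, and account for the geometry of the realizing blocks $\{[n-r_n,n]\}$: each edge $l$ is charged a weight $(r_n+1)^{-1}$ (or the sharper $(r_n+1)^{-(q+1)/q}$‑type weight produced by the grown‑block estimate) for every block that covers it, and one must show both that these weights sum to at most $2$ at each edge and that the blocks which reach to the left of $a-r$ contribute negligibly — facts one expects to extract from the minimality of the $r_n$ together with the monotonicity of $\widetilde M_\beta f$ on $[a,b]$. Feeding this back yields \eqref{Sec5_Lem_novo_eq1}, and the analogous argument under reflection yields \eqref{Sec5_Lem_novo_eq2}.

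I expect this last step to be the main obstacle. In contrast to the case $\beta=0$ — where a single increment estimate telescopes cleanly — here the telescoped difference $\widetilde M_\beta f(a)-\widetilde M_\beta f(b)$ can be far larger than $\var(f;[a-r,b])$, so the $q$‑th powers cannot be discarded; one is forced to track simultaneously the decay rate $(r_n+1)^{-(q+1)}$ of each increment and the precise overlap pattern of the realizing blocks along the monotone stretch, and to combine the shifted‑block and grown‑block estimates according to whether $r_n$ is small or large. The constant $2$ appearing in \eqref{Sec5_Lem_novo_eq1}–\eqref{Sec5_Lem_novo_eq2} (and, downstream, the constant $4^{1/q}$ in Theorem \ref{Thm2}) is precisely the residue of this overlap bookkeeping.
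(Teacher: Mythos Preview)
Your preliminary observations are correct: at each descent index $n\in[a,b-1]$ the supremum is attained by a purely left-sided average $A_{r_n,0}f(n)$, and both the shifted-block and grown-block pointwise estimates are valid. But the summation step you yourself flag as ``the main obstacle'' is a genuine gap, and the ingredients you have assembled do not close it. The overlap claim --- that $\sum_{n:\,l\in[n-r_n,n]}(r_n+1)^{-1}\leq 2$ for each edge $l$ --- is false in general: for $f=\delta_{l_0}$ one has $r_n=n-l_0$ at every descent index $n\geq l_0$, so the weight charged to edge $l_0$ is the divergent harmonic sum $\sum_{k\geq1}k^{-1}$; thus the shifted-block estimate alone is far too lossy. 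Your fallback, the grown-block estimate, bounds $\big(\widetilde M_\beta f(n)-\widetilde M_\beta f(n+1)\big)^q$ by a multiple of $(r_n+1)^{-(q+1)}\big(\sum_{j}f(j)\big)^q$, which involves the \emph{values} of $f$ rather than its increments; nothing in your outline converts this into the required shape $\|f'\|_{\ell^1}^{q-1}\cdot(\text{local variation of }f)$. Finally, the containment $n-r_n\geq a-r$ is asserted but not established, and it can fail for $n>a$.

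The paper sidesteps all three issues with a different organizing idea. Rather than charging each edge by the blocks covering it, one locates the index $m\in[a,b-1]$ at which the jump $\widetilde M_\beta f(m)-\widetilde M_\beta f(m+1)$ is \emph{maximal} and factors the whole sum as $(\text{max jump})^{q-1}\cdot\big(\widetilde M_\beta f(a)-\widetilde M_\beta f(b)\big)$. The telescoped total is bounded by $(r+1)^\beta\sum_{n=a-r}^{b-1}|f'(n)|$ using the fixed block $[a-r,a]$ translated along $[a,b]$; the max-jump factor is controlled via its own realizing block $[m-t,m]$, which gives $(t+1)^{1-\beta}\big(\widetilde M_\beta f(m)-\widetilde M_\beta f(m+1)\big)\leq|f(m-t)-f(m+1)|\leq\|f'\|_{\ell^1}$. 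When $t\geq r$ the normalizations $(r+1)^\beta$ and $(t+1)^{-(1-\beta)(q-1)}$ combine directly to give \eqref{Sec5_Lem_novo_eq1} with constant $1$. When $t<r$ one peels off the stretch $[m-t,b]$ at cost a factor of $2$, and if $m-t>a$ one recurses on the residual interval $[a,m-t-1]$; the recursion terminates because the interval strictly shrinks. No overlap bookkeeping and no passage from $\sum f$ to $\sum|f'|$ is required.
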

\begin{proof}
We prove (i) and (ii) is analogous. Observe first that the existence of such minimal $r$ is guaranteed by Lemma \ref{Sec4_Lem6}. Then note that 
\begin{align}\label{Sec4_eq1_M_M}
\begin{split}
\big|\widetilde M_{\beta}f(a)-\widetilde M_{\beta}f(b)\big|&\leq \big| A_{r,0}f(a)- A_{r,0}f(b)\big| =\left| \sum_{k=0}^{b-a-1}A_{r,0}f(a+k)-A_{r,0}f(a+k+1)\right|\\
&\leq\frac{1}{(r+1)^{1-\beta}}\sum_{n=a-r}^{b-1}(r+1)|f(n)-f(n+1)|\\
&=(r + 1)^{\beta} \sum_{n=a-r}^{b-1}|f(n)-f(n+1)|.
\end{split}
\end{align}
Let  $m$ be the smallest integer in $[a,b-1]$ such that 
$$\widetilde M_{\beta}f(m)-\widetilde M_{\beta}f(m+1) =  \max\big\{\widetilde M_{\beta}f(n)-\widetilde M_{\beta}f(n+1); \,n \in [a, b-1]\big\} > 0\,,$$
and let $t=\min\{t\in \Z^+;\,\widetilde M_{\beta}f(m)=A_{t,0}f(m) \}$. The existence of such $t$ is guaranteed by Lemma \ref{Sec4_Lem6}.

\subsubsection*{Step 1} Let us first consider the situation when $t \geq r$. In this case, using \eqref{Sec4_eq1_M_M} we obtain
\begin{align}\label{Sec4_eq1_M_M_m}
\begin{split}
\sum_{n=a}^{b-1}\big|\widetilde M_{\beta}f(n)-\widetilde M_{\beta}f(n+1)\big|^{q}&\leq \big|\widetilde M_{\beta}f(m)-\widetilde M_{\beta}f(m+1)\big|^{q-1}\sum_{n=a}^{b-1}\big|\widetilde M_{\beta}f(n)-\widetilde M_{\beta}f(n+1)\big|\\
&=\big|\widetilde M_{\beta}f(m)-\widetilde M_{\beta}f(m+1)\big|^{q-1}\,\big|\widetilde M_{\beta}f(a)-\widetilde M_{\beta}f(b)\big|\\
&\leq\big|\widetilde M_{\beta}f(m)-\widetilde M_{\beta}f(m+1)\big|^{q-1}(r + 1)^{\beta} \sum_{n=a-r}^{b-1}|f(n)-f(n+1)|\\
&=\big|(r+1)^{1-\beta}\big(\widetilde M_{\beta}f(m)-\widetilde M_{\beta}f(m+1)\big)\big|^{q-1}\sum_{n=a-r}^{b-1}|f(n)-f(n+1)|\\
&\leq\big|(r +1)^{1-\beta}\big(A_{t,0}f(m)-A_{t,0}f(m+1)\big)\big|^{q-1}\sum_{n=a-r}^{b-1}|f(n)-f(n+1)|\\
&\leq\left|\frac{(r +1)^{1-\beta}}{(t+1)^{1-\beta}}\big(f(m-t)-f(m+1)\big)\right|^{q-1}\sum_{n=a-r}^{b-1}|f(n)-f(n+1)|\\
&\leq\| f'\|_{\ell^{1}(\Z)}^{q-1}\sum_{n=a-r}^{b-1}|f(n)-f(n+1)|.
\end{split}
\end{align}
This establishes \eqref{Sec5_Lem_novo_eq1} in this case.

\subsubsection*{Step 2} Now assume that $t< r$ and $m-t \leq a$. Note that in this case we have $a - r < m - t$. We may proceed as in \eqref{Sec4_eq1_M_M} and \eqref{Sec4_eq1_M_M_m} to obtain
\begin{align}\label{Sec4_eq1_M_M_step2}
\begin{split}
&\sum_{n=a}^{b-1}  \big|\widetilde M_{\beta}f(n)-\widetilde M_{\beta}f(n+1)\big|^{q}\\
&\leq (t+1) \big|\widetilde M_{\beta}f(m)-\widetilde M_{\beta}f(m+1)\big|^{q} + \big|\widetilde M_{\beta}f(m)-\widetilde M_{\beta}f(m+1)\big|^{q-1}\sum_{n=m}^{b-1}\big|\widetilde M_{\beta}f(n)-\widetilde M_{\beta}f(n+1)\big|\\
&\leq|f(m-t)-f(m+1)|^{q} +\big|\widetilde M_{\beta}f(m)-\widetilde M_{\beta}f(m+1)\big|^{q-1}\big|\widetilde M_{\beta}f(m)-\widetilde M_{\beta}f(b)\big|\\
&\leq|f(m-t)-f(m+1)|^{q}+\big|\widetilde M_{\beta}f(m)-\widetilde M_{\beta}f(m+1)\big|^{q-1}\,(t +1)^{\beta}\sum_{n=m-t}^{b-1}|f(n)-f(n+1)|\\
&\leq\left|f(m-t)-f(m+1)\right|^{q-1} \left(|f(m-t)-f(m+1)|+\sum_{n=m-t}^{b-1}|f(n)-f(n+1)|\right)\\
& \leq 2\| f'\|_{\ell^{1}(\Z)}^{q-1}\sum_{n=a-r}^{b-1}|f(n)-f(n+1)|,
\end{split}
\end{align}
and we have again established \eqref{Sec5_Lem_novo_eq1}.

\subsubsection*{Step 3} Finally, we consider the case $t< r$ and $a < m-t$. Reasoning as in \eqref{Sec4_eq1_M_M_step2} we obtain
\begin{align}\label{Sec4_induction_eq0}
\begin{split}
&\sum_{n=m-t}^{b-1}  \big|\widetilde M_{\beta}f(n)-\widetilde M_{\beta}f(n+1)\big|^{q}\\
&\leq (t+1) \big|\widetilde M_{\beta}f(m)-\widetilde M_{\beta}f(m+1)\big|^{q} + \big|\widetilde M_{\beta}f(m)-\widetilde M_{\beta}f(m+1)\big|^{q-1}\sum_{n=m}^{b-1}\big|\widetilde M_{\beta}f(n)-\widetilde M_{\beta}f(n+1)\big|\\
&\leq 2\| f'\|_{\ell^{1}(\Z)}^{q-1}\sum_{n=m-t}^{b-1}|f(n)-f(n+1)|.
\end{split}
\end{align}
We then proceed inductively. Let $(m_1,t_1) = (m,t)$. Having defined $(m_{1}, t_1), (m_{2}, t_2), \ldots, (m_{l-1},t_{l-1})$, if $t_{l-1} < r$ and $a< m_{{l-1}} - t_{{l-1}}$ we define $m_{l}$ as the smallest integer in the interval $[a,m_{{l-1}}-t_{{l-1}}-1]$ such that 
$$\widetilde M_{\beta}f(m_l)-\widetilde M_{\beta}f(m_l+1) =  \max\big\{\widetilde M_{\beta}f(n)-\widetilde M_{\beta}f(n+1); \,n \in[a,m_{{l-1}}-t_{{l-1}}-1]\big\} > 0\,,$$
and let $t_{l}=\min\{t\in \Z^+;\,\widetilde M_{\beta}f(m_{l})=A_{t,0}f(m_{l}) \}$. If $t_{l} < r$ and $a< m_{{l}} - t_{{l}}$ we reboot Step 3 to obtain
\begin{align}\label{Sec4_induction}
\sum_{n=m_{{l}}-t_{{l}}}^{m_{{l-1}}-t_{{l-1}}-1}\big|\widetilde M_{\beta}f(n)-\widetilde M_{\beta}f(n+1)\big|^{q}\leq2\,\| f'\|_{\ell^{1}(\Z)}^{q-1}\sum_{n=m_{{l}}-t_{{l}}}^{m_{{l-1}}-t_{{l-1}}-1}|f(n)-f(n+1)|.
\end{align} 
This process must terminate, i.e. there exists a smallest $N$ such that either (i) $t_N \geq r$ or (ii) $t_N < r$ and $m_{N} - t_{N} \leq a$. In the first case we use Step 1 to bound the $q$-variation of $\widetilde M_{\beta}f$ on the interval $[a, m_{N-1} - t_{N-1}]$ and in the second case we use Step 2 to bound the $q$-variation of $\widetilde M_{\beta}f$ on the interval $[a, m_{N-1} - t_{N-1}]$. We then sum with all the previous inequalities \eqref{Sec4_induction_eq0} and \eqref{Sec4_induction} of the inductive process to arrive at the desired result.
\end{proof}

We now introduce the local maxima and minima of a discrete function $g:\Z \to \R$.\footnote{The local extrema are defined slightly differently in  \cite{BCHP, CH}, but used with the meaning stated here.} We say that an interval $[n,m]$ is a {\it string of local maxima} of $g$ if 
$$g(n-1) < g(n) = \ldots = g(m) > g(m+1).$$
If $n = -\infty$ or $m = \infty$ (but not both simultaneously) we modify the definition accordingly, eliminating one of the inequalities. The rightmost point $m$ of such a string is a {\it right local maximum} of $g$, while the leftmost point $n$ is a {\it left local maximum} of $g$. We define {\it string of local minima}, {\it right local minimum} and {\it left local minimum} analogously.

\begin{proof}[Proof of Theorem \ref{Thm2}] 
Assume that $\wt{M}_{\beta}f$ is not constant (in case $\wt{M}_{\beta}f$ is constant the result is obviously true). Let $\{[a_j^-,a_j^+]\}_{j \in \Z}$ and $\{[b_j^-,b_j^+]\}_{j \in \Z}$ be the ordered strings of local maxima and local minima of $\wt{M}_{\beta}f$ (we allow the possibilities of $a_j^{-}$ or $b_j^{-} = - \infty$ and $a_j^{+}$ or $b_j^{+} =  \infty$), i.e. 
\begin{equation}\label{Sec4_sequence}
\ldots < a_{-1}^- \leq a_{-1}^+ < b_{-1}^- \leq b_{-1}^+ < a_0^- \leq a_0^+ < b_0^-\leq b_0^+ < a_1^- \leq a_1^+ < b_1^- \leq b_1^+ < \ldots
\end{equation}
This sequence may terminate in one or both sides (in principle, we are not even ruling out the possibility of this sequence being empty, i.e. of $\wt{M}_{\beta}f$ being monotone), and we adjust the notation accordingly.

\smallskip

Let $a_j^+$ be one of the right local maxima and $r_j^+$ be the smallest integer radius such that $\wt{M}_{\beta}f(a_j^+) = A_{r_j^+,0}f(a_j^+)$. The crucial observation is that $a_j^- \leq a_j^+ - r_j^+$ and Lemma \ref{Lem7_crucial} yields
\begin{align*}
\sum_j \sum_{n=a_j^+}^{b_j^- -1}\big|\widetilde M_{\beta}f(n)-\widetilde M_{\beta}f(n+1)\big|^{q} \leq 2\,\| f'\|_{\ell^{1}(\Z)}^{q-1}\sum_j \sum_{n=a_j^-}^{b_j^--1}|f(n)-f(n+1)|.
\end{align*}
Analogously,
\begin{align*}
\sum_j \sum_{n=b_j^+}^{a_{j+1}^- -1}\big|\widetilde M_{\beta}f(n)-\widetilde M_{\beta}f(n+1)\big|^{q} \leq 2\,\| f'\|_{\ell^{1}(\Z)}^{q-1}\sum_j \sum_{n=b_j^+}^{a_{j+1}^+-1}|f(n)-f(n+1)|.
\end{align*}
Adding up these inequalities we obtain the desired result (note the potential overlap in each $[a_j^-,a_j^+]$)
$$\sum_{n=-\infty}^{\infty} \big|\widetilde M_{\beta}f(n)-\widetilde M_{\beta}f(n+1)\big|^{q} \leq 4\, \| f'\|_{\ell^{1}(\Z)}^{q}.$$
Note that, in the exceptional cases when the sequence \eqref{Sec4_sequence} terminates to one or both sides, or even when the sequence \eqref{Sec4_sequence} is empty (if $\wt{M}_{\beta}f$ is monotone), the $q$-variation of $\wt M_{\beta}f$ on the intervals of length infinity where it is monotone can be bounded directly using Lemma \ref{Lem7_crucial}. 
\end{proof}

\section{Proof of Theorem \ref{Thm1}} \label{Sec5}

We now move the discussion to the continuous setting. In this case, the one-dimensional uncentered  fractional maximal operator $\wt{M}_{\beta}$ is defined as in \eqref{Intro_frac_max_op}. The ideas presented in the previous section (discrete setting) also play a relevant role here, while new technical details arise. When proving Theorem \ref{Thm1} we may assume without loss of generality that $f \geq 0$ since $\var(f) \leq \var(|f|)$. The case $\beta =0$ of Theorem \ref{Thm1} was proved by Aldaz and P\'{e}rez L\'{a}zaro \cite{AP} (with the sharp constant $C=1$), so throughout this section we restrict ourselves to the case $0 < \beta < 1$. For $r,s \geq 0$ we keep denoting the fractional averages by \footnote{We define $A_{0,0} (f)(x) =  \limsup_{r,s \to 0^+}\frac{1}{(r+s)^{1 - \beta}} \int_{x-r}^{x+s} f(y)\,\dy$. If $f$ is locally bounded and $\beta >0$ we have $A_{0,0} (f)(x) = 0$.}
\begin{equation*}
 A_{r,s}f(x) = \frac{1}{(r+s)^{1 - \beta}} \int_{x-r}^{x+s} f(y)\,\dy.
\end{equation*}
We start with the following preliminary lemma.
\begin{lemma}\label{Sec4_Lem8_prelim}
Let $0 < \beta < 1$. Let $f:\R \to \R^+$ be a bounded function such that $\wt{M}_{\beta}f \not\equiv \infty$. 
\begin{enumerate}
\item[(i)] We have $\wt{M}_{\beta}f(x) <\infty$ for all $x \in \R$.

\smallskip

\item[(ii)] If $\wt{M}_{\beta}f(x)$ is not attained by any average $A_{r,s}f(x)$ with $r,s \geq 0$, then 
\begin{equation}\label{Sec5_lem6_eq1}
\wt{M}_{\beta}f(y) \geq \wt{M}_{\beta}f(x)
\end{equation}
for all $y \in \R$. 
\end{enumerate}
\end{lemma}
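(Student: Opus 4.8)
The plan is to follow the proof of the discrete Lemma~\ref{Sec4_Lem6}, replacing sums by integrals and supplying a compactness argument in place of the finiteness used there. Write $C := \|f\|_{L^{\infty}(\R)}$. The one elementary tool behind both parts is the \emph{translation estimate}: since for $r,s\geq 0$ the intervals $[x-r,x+s]$ and $[y-r,y+s]$ have the same length, one is a translate of the other, so their symmetric difference has measure at most $2|x-y|$, whence
\[
\Big|\int_{x-r}^{x+s} f - \int_{y-r}^{y+s} f \Big| \leq 2C\,|x-y|,
\qquad\text{and therefore}\qquad
A_{r,s}f(y) \geq A_{r,s}f(x) - \frac{2C\,|x-y|}{(r+s)^{1-\beta}}
\]
for all $x,y\in\R$. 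I will also repeatedly use the crude bound $A_{r,s}f(x)\leq C(r+s)^{\beta}$, immediate from $f\leq C$.

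For part~(i), suppose $\wt{M}_{\beta}f(x_0)=\infty$ for some $x_0$ and pick $r_j,s_j\geq0$ with $A_{r_j,s_j}f(x_0)\to\infty$. The crude bound gives $(r_j+s_j)^{\beta}\geq A_{r_j,s_j}f(x_0)/C\to\infty$, so $r_j+s_j\to\infty$; then the translation estimate yields $A_{r_j,s_j}f(y)\geq A_{r_j,s_j}f(x_0)-2C|x_0-y|\,(r_j+s_j)^{\beta-1}\to\infty$ for every $y$, i.e. $\wt{M}_{\beta}f\equiv\infty$, contradicting the hypothesis. For part~(ii), assume $\wt{M}_{\beta}f(x)$ is not attained; by part~(i), $L:=\wt{M}_{\beta}f(x)<\infty$, and moreover $L>0$, for if $L=0$ then (using $f\geq0$) $A_{r,s}f(x)=0$ for all $r,s$, forcing $f=0$ a.e. and hence every average to equal $0$, so that the supremum would in fact be attained. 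Pick $r_j,s_j\geq0$ with $A_{r_j,s_j}f(x)\to L$. The key claim is that $r_j+s_j\to\infty$: if not, pass to a subsequence along which $r_j+s_j$ is bounded above; since $A_{r_j,s_j}f(x)\leq C(r_j+s_j)^{\beta}$ and $A_{r_j,s_j}f(x)\to L>0$, along that subsequence $r_j+s_j$ is also bounded below by a positive constant, so by Bolzano--Weierstrass we may extract $(r_j,s_j)\to(r_*,s_*)$ with $r_*+s_*>0$. Because $f$ is bounded, $(r,s)\mapsto\int_{x-r}^{x+s}f$ is (Lipschitz) continuous, and $(r+s)^{1-\beta}$ is continuous and nonzero at $(r_*,s_*)$, so $A_{r_j,s_j}f(x)\to A_{r_*,s_*}f(x)$, giving $L=A_{r_*,s_*}f(x)$ — contradicting non-attainment. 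Hence $r_j+s_j\to\infty$, and the translation estimate gives $A_{r_j,s_j}f(y)\geq A_{r_j,s_j}f(x)-2C|x-y|(r_j+s_j)^{\beta-1}\to L$ for every $y$, which is exactly \eqref{Sec5_lem6_eq1}.

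The only step with genuine content — and the sole divergence from the discrete proof of Lemma~\ref{Sec4_Lem6} — is the key claim in part~(ii) that non-attainment forces $r_j+s_j\to\infty$. In the discrete setting this is immediate because there are only finitely many admissible pairs of bounded total size; here it requires the compactness-and-continuity argument above, together with the small observation (using $f$ bounded and $\beta>0$) that $r_j+s_j$ cannot shrink to $0$ along a subsequence. Everything else is a routine transcription of the discrete computation.
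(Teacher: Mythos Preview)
Your proof is correct and follows essentially the same approach as the paper: the translation estimate, the crude bound $A_{r,s}f(x)\leq C(r+s)^{\beta}$ to force $r_j+s_j\to\infty$ in part~(i), and the Bolzano--Weierstrass/continuity argument in part~(ii). The only difference is cosmetic: you handle the possibility $r_j+s_j\to 0$ explicitly by first arguing $L>0$, whereas the paper covers it implicitly via its footnoted convention $A_{0,0}f(x)=0$ for bounded $f$ and $\beta>0$.
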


\begin{proof}
(i) If $\wt{M}_{\beta}f(x) = \infty$ for some $x \in \R$, since $f$ is bounded there exists a sequence $(r_j,s_j) \in \R^+ \times \R^+$ with $(r_j+s_j) \to \infty$ such that $A_{r_j,s_j}f(x) \to \infty$ as $j \to \infty$. Letting $C = \|f\|_{L^{\infty}(\R)}$, for any $y \in \R$ we have
\begin{equation}\label{Sec5_eq2_a_rs}
A_{r_j,s_j}f(y) \geq A_{r_j,s_j}f(x) - \frac{2 C |x-y|}{(r_j + s_j)^{1 - \beta}},
\end{equation}
which implies that $\wt{M}_{\beta}f(y) = \infty$, a contradiction.

\smallskip

\noindent (ii) If $\wt{M}_{\beta}f(x)$ is not attained, there exists a sequence $(r_j,s_j) \in \R^+ \times \R^+$ such that $A_{r_j,s_j}f(x) \to \wt{M}_{\beta}f(x)$ as $j \to \infty$. If $(r_j+s_j)$ is bounded, passing to a subsequence if necessary, we can find $(r,s) \in \R^+ \times \R^+$ such that $r_j \to r$ and $s_j \to s$, which implies that $A_{r,s}f(x) = \wt{M}_{\beta}f(x)$, a contradiction. Hence we have $(r_j+s_j) \to \infty$ and inequality \eqref{Sec5_lem6_eq1} plainly follows from \eqref{Sec5_eq2_a_rs}. 
\end{proof}

Our next proposition establishes the result for Lipschitz functions.

\begin{proposition}\label{q variation L}
Let $0 < \beta < 1$. Let $f:\R\rightarrow\R$ be a Lipschitz function such that $\wt{M}_{\beta}f \not\equiv \infty$. Then 
\begin{equation}\label{Sec5_Lem8_eq1}
\var_q\big(\wt{M}_{\beta}f\big) \leq 8^{1/q}\,\var(f).
\end{equation}
\end{proposition}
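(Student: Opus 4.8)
The plan is to transport the proof of Theorem~\ref{Thm2} from $\Z$ to $\R$, step for step. I would begin with the harmless reductions and a regularity statement. We may assume $\var(f)<\infty$ (otherwise \eqref{Sec5_Lem8_eq1} is trivial), so that $f$ is bounded with $f'\in L^{1}(\R)\cap L^{\infty}(\R)$; together with Lemma~\ref{Sec4_Lem8_prelim} this already gives $\wt{M}_{\beta}f(x)<\infty$ for every $x$. The key preliminary observation is that $\wt{M}_{\beta}f$ is in fact \emph{globally Lipschitz}: at a point $x$ where the supremum is attained by some $A_{r,s}f(x)$ with $r+s>0$, an envelope argument (each $A_{r,s}f$ is $C^{1}$ and touches $\wt{M}_{\beta}f$ from below at $x$) gives $(\wt{M}_{\beta}f)'(x)=\bigl(f(x+s)-f(x-r)\bigr)(r+s)^{\beta-1}$ at any point of differentiability, and this is bounded by $\min\{2\|f\|_{\infty}(r+s)^{\beta-1},\ \|f'\|_{\infty}(r+s)^{\beta}\}\le (2\|f\|_{\infty})^{\beta}\|f'\|_{\infty}^{1-\beta}$; at a point where the supremum is not attained, Lemma~\ref{Sec4_Lem8_prelim}(ii) forces a global minimum, hence $(\wt{M}_{\beta}f)'=0$ there. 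Consequently $\wt{M}_{\beta}f$ is locally absolutely continuous and $\var_q\bigl(\wt{M}_{\beta}f\bigr)=\|(\wt{M}_{\beta}f)'\|_{L^{q}(\R)}$, so it suffices to bound this $L^{q}$ norm.

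The heart of the matter is the continuous analogue of Lemma~\ref{Lem7_crucial}: if $\wt{M}_{\beta}f$ is non-increasing on $[a,b]$ with a strict decrease immediately to the right of $a$, and $r\ge0$ is the smallest radius with $\wt{M}_{\beta}f(a)=A_{r,0}f(a)$, then
\begin{equation*}
\int_{a}^{b}\bigl|(\wt{M}_{\beta}f)'\bigr|^{q}\,\dx\ \le\ 2\,\|f'\|_{L^{1}(\R)}^{q-1}\int_{a-r}^{b}|f'(x)|\,\dx,
\end{equation*}
with the symmetric statement on non-decreasing intervals. To prove it I would: (a) telescope along the monotone interval, $\wt{M}_{\beta}f(a)-\wt{M}_{\beta}f(b)\le A_{r,0}f(a)-A_{r,0}f(b)$, and run a Fubini computation, $A_{r,0}f(a)-A_{r,0}f(b)=r^{\beta}\int_{0}^{1}\bigl(f(a-r+\theta r)-f(b-r+\theta r)\bigr)\,\d\theta\le r^{\beta}\int_{a-r}^{b}|f'|$; (b) use $\int_{a}^{b}|(\wt{M}_{\beta}f)'|^{q}\le\bigl(\operatorname{ess\,sup}_{[a,b]}|(\wt{M}_{\beta}f)'|\bigr)^{q-1}\bigl(\wt{M}_{\beta}f(a)-\wt{M}_{\beta}f(b)\bigr)$ and pick $x^{*}\in[a,b]$ where $|(\wt{M}_{\beta}f)'(x^{*})|$ is as close as we like to that essential supremum, with optimal window $[x^{*}-t,x^{*}]$; (c) split into the three cases $t\ge r$, \ $t<r$ with $x^{*}-t\le a$, \ and $t<r$ with $x^{*}-t>a$, exactly as in Steps~1--3 of Lemma~\ref{Lem7_crucial}. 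The algebraic point that makes everything collapse is the identity $(1-\beta)q=1$, i.e. $q-1=\beta/(1-\beta)$: it turns $r^{\beta}|(\wt{M}_{\beta}f)'(x^{*})|^{q-1}$ into $\bigl[(r/t)^{1-\beta}|f(x^{*}-t)-f(x^{*})|\bigr]^{q-1}\le\|f'\|_{1}^{q-1}$ when $t\ge r$, and makes the ``$(t+1)$ bad terms'' contribution of the discrete Step~2 become $\int_{a}^{x^{*}}|(\wt{M}_{\beta}f)'|^{q}\le t\,|(\wt{M}_{\beta}f)'(x^{*})|^{q}=|f(x^{*}-t)-f(x^{*})|^{q}$. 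The third case is closed by an inductive decomposition of $[a,b]$ into successively shorter subintervals, as in Step~3.

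With the local lemma in hand, the global assembly follows the proof of Theorem~\ref{Thm2}. Using that $\wt{M}_{\beta}f$ is continuous (indeed Lipschitz), I would decompose the set $\{(\wt{M}_{\beta}f)'\ne0\}$ into its maximal intervals of strict monotonicity, organized by the strings of local maxima and local minima of $\wt{M}_{\beta}f$ as in \eqref{Sec4_sequence}. For a descending run starting at a right local maximum $a^{+}$ with minimal left radius $r^{+}$, the crucial geometric fact $a^{-}\le a^{+}-r^{+}$ holds just as in the discrete case, so the local lemma applies on $[a^{+},b^{-}]$ with comparison interval $[a^{-},b^{-}]$; symmetrically for ascending runs. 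Summing over all runs — during which each $|f'(x)|$ is counted only boundedly many times — and keeping track of the constants (which we do not claim to be optimal), one arrives at $\|(\wt{M}_{\beta}f)'\|_{L^{q}(\R)}\le 8^{1/q}\,\var(f)$; monotone half-lines, or a monotone $\wt{M}_{\beta}f$, are handled directly by the local lemma.

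I expect the genuine obstacles to be on the ``continuous bookkeeping'' side, where the discrete proof had it easy. Two points in particular: (i) one must justify that $\{(\wt{M}_{\beta}f)'\ne0\}$ really does split into countably many maximal open intervals of strict monotonicity separated by strings of local extrema — i.e. that $\wt{M}_{\beta}f$ admits no Cantor-type oscillation — and it is precisely here that the global Lipschitz regularity from the first step is essential; (ii) one must show the inductive Step~3 of the local lemma terminates, for which the integrality of the radii that forces finiteness in the discrete setting is no longer available, so one instead needs a quantitative lower bound on the decrease of the lengths $x^{*}_{\ell}-t_{\ell}$ (or a compactness argument) to close the recursion. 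These are the places where the ``new technical details'' alluded to in the introduction concentrate.
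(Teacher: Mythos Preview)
Your outline follows the same spirit as the paper's proof but differs in one crucial organizational choice, and it is precisely this choice that the paper uses to dissolve both of the obstacles you flag at the end.

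The paper does \emph{not} first establish Lipschitz regularity of $\wt{M}_\beta f$ and then work with its genuine intervals of monotonicity and the pointwise derivative $(\wt{M}_\beta f)'$. Instead, it fixes an arbitrary finite partition $\mc{P}=\{x_1<\cdots<x_N\}$ from the outset and proves the continuous analogue of Lemma~\ref{Lem7_crucial} (Lemma~\ref{Lem10_crucial_too}) directly for the difference quotients $\tfrac{|\wt{M}_\beta f(x_{n})-\wt{M}_\beta f(x_{n+1})|}{|x_{n}-x_{n+1}|}$ along $\mc{P}$. The ``point of largest slope'' $x_m$ is then the index maximizing this \emph{finite} collection of quotients, so it is attained exactly, and the inductive Step~3 passes to a strictly smaller index range $\{x_1,\ldots,x_{k_{l}-1}\}$ at each stage, so termination is automatic. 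Strings of local maxima and minima are likewise defined \emph{relative to $\mc{P}$}, hence there are only finitely many and no topological analysis of the extrema of $\wt{M}_\beta f$ is needed. Summing over these pieces and taking the supremum over $\mc{P}$ at the very end yields the $\var_q$ bound directly from the definition \eqref{Intro_q_var}, without invoking Proposition~\ref{Prop_Riesz} or any a priori absolute continuity of $\wt{M}_\beta f$ at this stage.

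So your two ``genuine obstacles'' are real for the route you sketched --- a Lipschitz function can certainly have a complicated set of local extrema, and without integrality there is no obvious reason your continuous Step~3 recursion terminates --- but the paper's device of carrying a fixed finite partition throughout eliminates both simultaneously. Your approach could probably be pushed through (the Lipschitz bound you derive is correct), but it would require substantial extra argument that the partition-based formulation simply does not need. One small side effect of the partition bookkeeping is an additional two-fold overlap in Step~3 (the comparison intervals $[x_{k_l-1},x_{k_{l-1}}]$ overlap on $[x_{k_l-1},x_{k_l}]$), which is why the paper's local lemma carries the constant $4$ rather than your $2$, and the final constant is $8$.
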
 
We postpone the proof of Proposition \ref{q variation L} until \S \ref{Proof_Lipschitz_case}. For now, let us assume the validity of this result to conclude the proof of Theorem \ref{Thm1}. In order to do so, we also need the following classical result of F. Riesz (see \cite[Chapter IX \S4, Theorem 7]{N}).
\begin{proposition}[F. Riesz]\label{Prop_Riesz}
Let $g:\R \to \R$ be a given function and $1 < q < \infty$. Then $\var_q(g) < \infty$ if and only if $g$ is absolutely continuous and its derivative $g'$ belongs to $L^q(\R)$. Moreover, in this case, we have that
\begin{equation}\label{Riesz_new_id}
\|g'\|_{L^q(\R)} = \var_q(g).
\end{equation}
\end{proposition}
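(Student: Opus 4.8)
The plan is to prove the two implications separately and then glue the resulting inequalities into the asserted identity \eqref{Riesz_new_id}. Throughout I write $q' = q/(q-1)$ for the conjugate exponent, and I recall that $\var_q(g)$ is a supremum of partition sums, so any single (finite or, by monotone limits, countable) partition furnishes a lower bound for $\var_q(g)^q$.

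\textbf{The easy direction.} First I would assume $g$ is absolutely continuous with $g' \in L^q(\R)$ and show $\var_q(g) \le \|g'\|_{L^q(\R)}$. Fix a finite partition $\mc{P} = \{x_1 < \cdots < x_N\}$. On each subinterval, absolute continuity gives $g(x_{n+1}) - g(x_n) = \int_{x_n}^{x_{n+1}} g'(t)\,\dt$, and Jensen's inequality applied to the convex function $t \mapsto |t|^q$ against the normalized Lebesgue measure on $[x_n,x_{n+1}]$ yields
\[
\frac{|g(x_{n+1}) - g(x_n)|^q}{|x_{n+1} - x_n|^{q-1}} \le \int_{x_n}^{x_{n+1}} |g'(t)|^q\,\dt.
\]
Summing over $n$ and taking the supremum over all partitions gives $\var_q(g) \le \|g'\|_{L^q(\R)} < \infty$.

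\textbf{The hard direction: regularity of $g$.} Now assume $\var_q(g) < \infty$. Testing the two-point partition $\{x,y\}$ gives $|g(y) - g(x)| \le \var_q(g)\,|y-x|^{1/q'}$, so $g$ is H\"older continuous, hence continuous and locally integrable (which makes the integrals below meaningful). To upgrade this to absolute continuity, I would take any finite family of disjoint intervals $(a_i,b_i)$ and apply H\"older's inequality with exponents $q$ and $q'$:
\[
\sum_i |g(b_i) - g(a_i)| = \sum_i \frac{|g(b_i) - g(a_i)|}{(b_i - a_i)^{1/q'}}\,(b_i-a_i)^{1/q'} \le \var_q(g)\Big(\sum_i (b_i - a_i)\Big)^{1/q'},
\]
where I use $q/q' = q-1$ to recognize the first factor as a piece of $\var_q(g)$. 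Since $1/q' > 0$, the right-hand side tends to $0$ as $\sum_i(b_i - a_i) \to 0$, so $g$ is absolutely continuous; in particular $g$ is differentiable a.e.\ and $g(y) - g(x) = \int_x^y g'(t)\,\dt$.

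\textbf{The reverse inequality.} The crux is to show $\|g'\|_{L^q(\R)} \le \var_q(g)$, which with the easy direction forces equality. For $h>0$ set $D_h g(x) = (g(x+h)-g(x))/h$. For each base point $a$, summing over the consecutive nodes of the arithmetic partition $\{a+nh\}_{n\in\Z}$ (as a monotone limit of finite sub-partitions) gives $\sum_{n} |g(a+(n+1)h) - g(a+nh)|^q \le h^{q-1}\var_q(g)^q$. Integrating this over $a \in [0,h)$ and using that the points $a+nh$ tile $\R$ exactly once, I obtain, after dividing by $h^q$,
\[
\int_{\R} |D_h g(x)|^q\,\dx = \frac{1}{h^q}\int_{\R} |g(x+h)-g(x)|^q\,\dx \le \var_q(g)^q.
\]
Since $g$ is absolutely continuous, $D_h g(x) = \tfrac1h\int_x^{x+h} g'(t)\,\dt \to g'(x)$ at every Lebesgue point of $g'$, hence a.e., and Fatou's lemma gives $\|g'\|_{L^q(\R)} \le \liminf_{h\to 0^+}\|D_h g\|_{L^q(\R)} \le \var_q(g)$. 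Combining with the easy direction yields \eqref{Riesz_new_id}. The technical heart is this reverse inequality, and specifically the averaging-over-translates step: the Fubini interchange together with the fact that shifting $a$ over one period $[0,h)$ sweeps $\R$ is what converts a supremum over discrete partitions into a genuine $L^q$ bound on difference quotients, and matching the powers of $h$ there (via $q/q'=q-1$) is exactly where the precise normalization in the definition of $\var_q$ is used.
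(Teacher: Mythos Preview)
Your proof is correct and self-contained. The paper, however, does not actually prove this proposition: it is stated as a classical result of F.~Riesz and proved by citation to Natanson \cite[Chapter IX \S4, Theorem 7]{N}. So there is no ``paper's own proof'' to compare against beyond the reference.

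That said, your argument is essentially the standard one (and presumably close to what appears in Natanson): Jensen for the easy inequality $\var_q(g) \le \|g'\|_{L^q}$; H\"older on disjoint intervals to upgrade finite $q$-variation to absolute continuity; and the averaging-over-translates trick $\int_0^h \sum_n |g(a+(n+1)h)-g(a+nh)|^q\,\d a = \int_{\R}|g(x+h)-g(x)|^q\,\dx$ combined with Fatou for the reverse inequality. Each step checks out. One small remark on the absolute-continuity step: when you bound $\sum_i |g(b_i)-g(a_i)|^q/(b_i-a_i)^{q-1}$ by $\var_q(g)^q$, you are implicitly using that the ordered endpoints of the disjoint intervals form a single partition and that the terms you keep form a nonnegative sub-sum of its full partition sum --- this is fine, but worth saying explicitly.
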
 

\subsection{Proof of Theorem \ref{Thm1}} We start by showing the validity of the inequality in \eqref{Intro_q_bound1} for a function $f:\R \to \R^+$ of bounded variation such that $\wt{M}_{\beta}f \not\equiv \infty$. Let $\varphi \in C^{\infty}_{c}(\R)$ be a nonnegative smooth function with support in $[-1,1]$ and $\|\varphi\|_{L^1(\R)} =1$. For $\varepsilon >0$ we define $\varphi_{\varepsilon}(x) = \frac{1}{\varepsilon} \varphi(\frac{x}{\varepsilon})$ and write $f_{\varepsilon} = f * \varphi_{\varepsilon}$. Note that $f_{\varepsilon}$ is Lipschitz continuous and $\var(f_{\varepsilon}) \leq \var(f)$ for all $\varepsilon >0$.
\smallskip

Fix a partition $\mc{P} = \{x_1 < x_2 < \ldots < x_N\}$. By Proposition \ref{q variation L} we have
\begin{equation}\label{Sec5_var_q_eps}
\left(\sum_{n=1}^{N-1} \frac{\big|\wt{M}_{\beta} f_{\varepsilon}(x_{n+1}) - \wt{M}_{\beta}f_{\varepsilon}(x_n)\big|^q}{|x_{n+1} - x_n|^{q-1} } \right)^{1/q}\leq 8^{1/q} \,\var(f_{\varepsilon}) \leq 8^{1/q} \,\var(f)
\end{equation}
for all $\varepsilon >0$. We claim that 
\begin{equation}\label{Sec5_point_conv}
\lim_{\varepsilon \to 0} \wt{M}_{\beta} f_{\varepsilon}(x) = \wt{M}_{\beta} f(x)
\end{equation}
for all $x \in \R$. It then follows from \eqref{Sec5_var_q_eps} and \eqref{Sec5_point_conv} that 
\begin{equation*}
\left(\sum_{n=1}^{N-1} \frac{\big|\wt{M}_{\beta} f(x_{n+1}) - \wt{M}_{\beta}f(x_n)\big|^q}{|x_{n+1} - x_n|^{q-1} } \right)^{1/q} \leq 8^{1/q} \,\var(f).
\end{equation*}
Since the original partition $\mc{P}$ was arbitrary, we arrive at \eqref{Intro_q_bound1}.

\smallskip

We now prove \eqref{Sec5_point_conv}. Recall that $\lim_{\varepsilon \to 0} f_{\varepsilon}(x) = f(x)$ at every point of continuity of $f$ (in particular, almost everywhere). By Fatou's lemma, for any nontrivial interval $[x-s, x+r]$ containing $x$ we have
\begin{equation*}
A_{r,s}f(x) \leq \liminf_{\varepsilon \to 0} A_{r,s}f_{\varepsilon}(x) \leq \liminf_{\varepsilon \to 0} \wt{M}_{\beta} f_{\varepsilon}(x)\,,
\end{equation*}
from which we conclude that 
\begin{equation}\label{Sec5_liminf}
\wt{M}_{\beta} f(x) \leq \liminf_{\varepsilon \to 0} \wt{M}_{\beta} f_{\varepsilon}(x).
\end{equation}
We prove the opposite  inequality by contradiction. Given $x \in \R$, assume that for some $\eta >0$ we have
\begin{equation}\label{Sec5_lim_sup}
\limsup_{\varepsilon\rightarrow0}\widetilde M_{\beta}f_{\varepsilon}(x)>(1+2\eta)\,\widetilde M_{\beta}f(x)
\end{equation}
(in particular, $f \not \equiv 0$ and $\widetilde M_{\beta}f(x) >0$). Then, for a certain sequence of $\varepsilon \to 0$, there exist $y = y_\varepsilon$ and $r = r_\varepsilon >0$ such that $x \in [y - r, y+r]$ and 
\begin{equation}\label{Sec5_eta_bound}
\frac{1}{(2r)^{1-\beta}} \int_{y-r}^{y+r} f_{\varepsilon}(t)\,\dt > (1+\eta)\,\widetilde M_{\beta}f(x).
\end{equation}
Since $\|f_{\varepsilon}\|_{L^{\infty}(\R)} \leq \|f\|_{L^{\infty}(\R)}$ and we are assuming that $0 < \beta < 1$, we note that we must have $r_\varepsilon > c >0$ for this sequence of $\varepsilon \to 0$. On the other hand, observe that 
\begin{align}\label{Sec5_grande_split}
\begin{split}
\frac{1}{(2r)^{1-\beta}} \int_{y-r}^{y+r} f_{\varepsilon}(t)\,\dt &=\frac{1}{(2r)^{1-\beta}} \int_{y-r}^{y+r}\int_{-\varepsilon}^{\varepsilon}f(t-u)\,\varphi_{\varepsilon}(u)\,\du\,\dt\\
&=\frac{1}{(2r)^{1-\beta}}\int_{-\varepsilon}^{\varepsilon}\left(\int_{y-r}^{y+r}f(t-u)\,\dt\right)\varphi_{\varepsilon}(u)\,\du\\
&\leq\frac{1}{(2r)^{1-\beta}}\int_{-\varepsilon}^{\varepsilon}\left(\int_{y-r-\varepsilon}^{y+r+\varepsilon}f(t)\,\dt\right)\varphi_{\varepsilon}(u)\,\du\\
&\leq \frac{(2r + 2\varepsilon)^{1- \beta}}{(2r)^{1-\beta}}\int_{-\varepsilon}^{\varepsilon} \widetilde M_{\beta}f(x)\, \varphi_{\varepsilon}(u)\,\du\\
&\leq \frac{(2r + 2\varepsilon)^{1- \beta}}{(2r)^{1-\beta}}\widetilde M_{\beta}f(x).
\end{split}
\end{align}
From \eqref{Sec5_eta_bound} and \eqref{Sec5_grande_split} we conclude that
\begin{equation*}
\frac{(2r + 2\varepsilon)^{1- \beta}}{(2r)^{1-\beta}} > 1 + \eta.
\end{equation*}
If we restrict ourselves to the range $\varepsilon \leq 1$, the inequality above implies that $r \leq N$ for some large $N = N(\beta, \eta)$ and then $|y| \leq |x| + N$.  Therefore, there exists a subsequence $\{y_{\varepsilon_{k}},r_{\varepsilon_{k}}\}\subset\{y_{\varepsilon},r_{\varepsilon}\}$ and a pair $(y_0,r_0)$ with $r_{0}>0$ such that $y_{\varepsilon_{k}}\to y_{0}$ and $r_{\varepsilon_{k}}\to r_{0}$ as $\varepsilon_{k}\to 0$. Then $x \in [y_0 - r_0, y_0 + r_0]$ and 
\begin{equation*}
\widetilde M_{\beta}f(x) \geq \frac{1}{(2r_0)^{1-\beta}} \int_{y_0-r_0}^{y_0+r_0} f(t)\,\dt = \lim_{\varepsilon_k \to 0} \frac{1}{(2r_{\varepsilon_k})^{1-\beta}} \int_{y_{\varepsilon_k}-r_{\varepsilon_k}}^{y_{\varepsilon_k}+r_{\varepsilon_k}} f_{\varepsilon}(t)\,\dt \geq (1+\eta)\,\widetilde M_{\beta}f(x),
\end{equation*}
which is a contradiction. Hence \eqref{Sec5_lim_sup} cannot occur and we must have
\begin{equation*}
\limsup_{\varepsilon\rightarrow0}\widetilde M_{\beta}f_{\varepsilon}(x)\leq \widetilde M_{\beta}f(x),
\end{equation*}
which, together with \eqref{Sec5_liminf}, establishes the pointwise convergence \eqref{Sec5_point_conv} and concludes the proof of the inequality in \eqref{Intro_q_bound1}.

\smallskip

Finally, the fact that $\wt{M}_{\beta}f$ is absolutely continuous with 
$$\big\|\big(\wt{M}_{\beta}f\big)'\big\|_{L^q(\R)} = \var_q\big(\wt{M}_{\beta}f\big)$$
now follows from Proposition \ref{Prop_Riesz}. This concludes the proof of Theorem \ref{Thm1}.

\subsection{Proof of Proposition \ref{q variation L}}\label{Proof_Lipschitz_case} Recall that we are assuming that $f \geq 0$. If $\var(f) = \infty$, we understand that inequality \eqref{Sec5_Lem8_eq1} is true, so let us also assume that $f$ is of bounded variation. The next lemma is the continuous analogue of Lemma \ref{Lem7_crucial} and is the core of this proof.

\begin{lemma}\label{Lem10_crucial_too}
Let $0< \beta <1$. Let $f:\R \to \R^+$ be a Lipschitz function of bounded variation such that $\wt{M}_{\beta}f$ is non-constant $($in particular, $\wt{M}_{\beta}f \not\equiv \infty$$)$.
\begin{enumerate}
\item[(i)] Let $x_1 < x_2 < \ldots < x_N$ be a sequence of real numbers such that 
\begin{equation*}
\wt{M}_{\beta}f(x_1) > \wt{M}_{\beta}f(x_2)  \geq \ldots \geq \wt{M}_{\beta}f(x_{N-1}) \geq \wt{M}_{\beta}f(x_N).
\end{equation*}
Let $r,s \geq 0$ be such that $\wt{M}_{\beta}f(x_1) = A_{r,s}f(x_1)$, with $r+s$ minimal $($and then with $r$ minimal, if necessary$)$. Then
\begin{equation}\label{Sec5_Lem10_goal1}
\sum_{n=1}^{N-1} \frac{\big|\wt{M}_{\beta} f(x_{n+1}) - \wt{M}_{\beta}f(x_n)\big|^q}{|x_{n+1} - x_n|^{q-1} } \leq 4 \,\|f'\|_{L^1(\R)}^{q-1}\,\int_{x_1 - r}^{x_N} |f'(x)|\,\dx.
\end{equation}

\smallskip

\item[(ii)] Let $x_1 < x_2 < \ldots < x_N$ be a sequence of real numbers such that 
\begin{equation*}
\wt{M}_{\beta}f(x_1) \leq  \wt{M}_{\beta}f(x_2) \leq \ldots \leq \wt{M}_{\beta}f(x_{N-1}) < \wt{M}_{\beta}f(x_N).
\end{equation*}
Let $r,s \geq 0$ be such that $\wt{M}_{\beta}f(x_N) = A_{r,s}f(x_N)$, with $r+s$ minimal $($and then with $s$ minimal, if necessary$)$. Then
\begin{equation}\label{Sec5_Lem10_goal2}
\sum_{n=1}^{N-1} \frac{\big|\wt{M}_{\beta} f(x_{n+1}) - \wt{M}_{\beta}f(x_n)\big|^q}{|x_{n+1} - x_n|^{q-1} } \leq 4 \,\|f'\|_{L^1(\R)}^{q-1}\,\int_{x_1}^{x_N+s} |f'(x)|\,\dx.
\end{equation}
\end{enumerate}
\end{lemma}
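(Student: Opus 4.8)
The plan is to mimic the strategy of Lemma~\ref{Lem7_crucial} (the discrete case), working directly with the Riesz sums rather than passing to derivatives. I will prove part (i); part (ii) follows by reflecting $x \mapsto -x$, which swaps the roles of $r$ and $s$. Throughout, write $G = \wt{M}_\beta f$ for brevity, and recall that by Lemma~\ref{Sec4_Lem8_prelim} the supremum defining $G(x_1)$ is actually attained by a pair $(r,s)$ with $r+s>0$ (if $r+s=0$ then $A_{0,0}f(x_1)=0$, forcing $G \equiv 0$ by the boundedness of $G$ and Lemma~\ref{Sec4_Lem8_prelim}(ii), contradicting non-constancy), so the minimal such pair exists.

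\textbf{Step 1: a one-step estimate.} For the interval $[x_1-r,x_1+s]$ realizing $G(x_1)$, I first record that for any $y \geq x_1$,
\[
A_{r,s}f(y) - A_{r,s}f(y+h) = \frac{1}{(r+s)^{1-\beta}}\left( \int_{y-r}^{y-r+h} f - \int_{y+s}^{y+s+h} f\right)
\]
for $h>0$, whence, summing a telescoping chain and using $G(x_{n+1}) \leq G(x_n)$ together with $G(x_n) \geq A_{r,s}f(x_n)$ and $G(x_1) = A_{r,s}f(x_1)$, one gets the ``total drop'' bound
\[
G(x_1) - G(x_N) \leq A_{r,s}f(x_1) - A_{r,s}f(x_N) \leq (r+s)^{\beta}\int_{x_1-r}^{x_N+s}|f'(t)|\,\dt.
\]
Actually I will want the cleaner version with $\int_{x_1-r}^{x_N}|f'|$, which requires a little care about the right endpoint; the point is that translating $[x_1-r,x_1+s]$ forward to $[x_N-r,x_N+s]$ sweeps over the set $[x_1-r,x_N+s]$, but using minimality of $s$ one can trade the tail. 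This is exactly the continuous analogue of \eqref{Sec4_eq1_M_M}.

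\textbf{Step 2: the maximal gap and the $(q-1)$-trick.} Since $q \geq 1$, bound
\[
\sum_{n=1}^{N-1}\frac{|G(x_{n+1})-G(x_n)|^q}{|x_{n+1}-x_n|^{q-1}} \leq \left(\sup_n \frac{|G(x_{n+1})-G(x_n)|}{|x_{n+1}-x_n|}\right)^{q-1}\sum_{n=1}^{N-1}|G(x_{n+1})-G(x_n)|,
\]
and the last sum telescopes (by monotonicity) to $G(x_1)-G(x_N)$, which Step~1 controls. So the crux is to bound the maximal \emph{difference quotient} $\sup_n (G(x_n)-G(x_{n+1}))/(x_{n+1}-x_n)$. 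Pick the index $m$ realizing this sup, let $[x_m-t,x_m+u]$ realize $G(x_m)$ with $t+u$ minimal (then $t$ minimal), and estimate $G(x_m)-G(x_{m+1}) \leq A_{t,u}f(x_m) - A_{t,u}f(x_{m+1})$ by the same telescoping-in-$h$ computation as in Step~1; dividing by $x_{m+1}-x_m$ and using $(t+u)\geq$ (something), one arrives at a bound of the form $(t+u)^{\beta-1}$ times a piece of $\int|f'|$. Multiplying the $(q-1)$-th power of this by the $(r+s)^{\beta}$ from Step~1 produces $\big((r+s)/(t+u)\big)^{\beta(q-1)}\cdot(t+u)^{-(q-1)(1-\beta)}\cdot(\text{stuff})$; since $q=1/(1-\beta)$ one has $\beta(q-1) = \beta q - \beta = (q-1) - \ldots$ — the exponents are arranged precisely so that the powers of $(t+u)$ cancel against the $(q-1)$-st power of an $L^1$-mass $\leq \|f'\|_{L^1}$. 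This is the mechanism behind \eqref{Sec4_eq1_M_M_m}.

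\textbf{Step 3: the trichotomy and induction.} As in the discrete proof, the comparison in Step~2 is clean when $t \geq r$ (Step~1 of Lemma~\ref{Lem7_crucial}); when $t<r$ and $x_m - t \leq x_1$ one gets an extra boundary term $|f(x_m-t)-f(x_{m+1})|^q$ absorbed into $\|f'\|_{L^1}^{q-1}\int|f'|$ with the loss of a factor $2$ (Step~2); when $t<r$ and $x_m-t>x_1$ one recurses on the interval $[x_1,x_m-t]$, producing a disjoint family of sub-intervals whose contributions telescope. The process terminates because each recursion strictly shrinks the interval and the points are discrete in $n$. Summing all pieces yields the constant $4$ in \eqref{Sec5_Lem10_goal1} (the $2$ from the boundary term, doubled again from combining the recursive pieces as in the discrete argument, or from handling left vs.\ right separately — I will track the bookkeeping to confirm $4$ suffices). \textbf{The main obstacle} is the continuous bookkeeping in Step~2/Step~3: unlike in $\Z$, the ``averaging radii'' $t,u$ vary continuously and the intervals of integration overlap, so one must argue carefully that the recursion produces genuinely disjoint contributions to $\int_{x_1-r}^{x_N}|f'|$ and that the minimality conditions on $(r,s)$ and $(t,u)$ are strong enough to prevent double-counting. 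The Lipschitz hypothesis is used to guarantee $f'$ exists a.e.\ and that all averages are genuinely attained, and to justify the a.e.\ differentiability and the fundamental-theorem-of-calculus manipulations; the passage from Lipschitz $f$ to general $BV$ $f$ is already handled in \S\ref{Proof_Lipschitz_case} via mollification, so nothing further is needed here.
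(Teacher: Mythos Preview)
Your outline matches the paper's proof: the same $(q-1)$-trick, the same three-case split on the radii at $x_1$ versus those at the maximal-quotient index $x_m$, and the same recursion. Two points need correcting, however.

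First, the trichotomy should be on $t+u$ versus $r+s$, not on $t$ versus $r$. Unlike the discrete Lemma~\ref{Lem7_crucial} (where at a strict local maximum one may take the right radius to be $0$), here both $r,s$ may be positive, and in your ``Step~2'' case you will need the inclusion $x_1-r\le x_m-t$, which follows from $t+u<r+s$ together with $s<x_m-x_1$.

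Second --- and this is the mechanism behind the inclusion just mentioned --- replacing $\int_{x_1-r}^{x_N+s}|f'|$ by $\int_{x_1-r}^{x_N}|f'|$ in your Step~1 is \emph{not} a consequence of ``minimality of $s$''. What drives it is the strict inequality $G(x_1)>G(x_N)$: if $s\ge x_N-x_1$ then $x_N\in[x_1-r,x_1+s]$ and hence $G(x_N)\ge A_{r,s}f(x_1)=G(x_1)$, a contradiction; so $s<x_N-x_1$. The paper then compares $A_{r,s}f(x_1)$ with $A_{r+s,0}f(x_N)$ --- two averages over intervals of the same length $r+s$, both contained in $[x_1-r,x_N]$ --- and applies the mean value theorem for integrals to get \eqref{Sec4_Lem10_eq1_initial_est} directly. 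The analogous fact $u<x_{m+1}-x_m$ (from $G(x_m)>G(x_{m+1})$, which holds since $m$ realizes a strictly positive difference quotient) is exactly what keeps the integrals in your Step~2 inside $[x_m-t,x_{m+1}]$; without it you again pick up an unwanted $+u$ on the right. Once these containments are in place, your bookkeeping goes through and yields the constant $4$ as in the paper (a factor $2$ from the boundary term in Step~2, doubled by the two-fold overlap $[x_{k_l-1},x_{k_l}]$ of successive recursion intervals in Step~3).
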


\begin{proof} We prove (i) and (ii) is analogous. Note that the existence of such a pair $(r,s)$ is guaranteed by Lemma \ref{Sec4_Lem8_prelim} and the continuity properties of the averages. For such pair $(r,s)$, since $\wt{M}_{\beta}f(x_1) > \wt{M}_{\beta}f(x_N)$, we must have $s < x_N - x_1$. Let $a \in[x_1-r,x_1+s]$ be a point such that
 \begin{equation*}
 \frac{1}{(r+s)}\int_{x_1-r}^{x_1+s}f(x)\,\dx = f(a),
 \end{equation*} 
and let $b \in [x_N-r-s,x_N]\subset[x_1-r,x_N]$ be a point such that
\begin{equation*}
\frac{1}{r+s}\int_{x_N-r-s}^{x_N}f(x)\,\dx = f(b).
\end{equation*} 
We then obtain
\begin{align}\label{Sec4_Lem10_eq1_initial_est}
\begin{split}
\big|\wt{M}_{\beta}f(x_1)- \wt{M}_{\beta}f(x_N)\big|&\leq \big|A_{r,s}f(x_1) - A_{r+s,0}f(x_N)\big|\\
&= (r+s)^{\beta}\, |f(a) - f(b)|\\
&\leq (r+s)^{\beta}\int_{x_1-r}^{x_N}|f'(x)|\,\dx.
\end{split}
\end{align}

\smallskip

Now let $m$ be the smallest integer with $1 \leq m \leq N-1$ such that 
$$\frac{\widetilde M_{\beta}f(x_m)-\widetilde M_{\beta}f(x_{m+1})}{|x_{m} - x_{m+1}|} =  \max\left\{\frac{\widetilde M_{\beta}f(x_n)-\widetilde M_{\beta}f(x_{n+1})}{|x_{n} - x_{n+1}|}; \,1 \leq n \leq N-1 \right\} > 0.$$
Let $t,u \geq 0$ be such that $\wt{M}_{\beta}f(x_m) = A_{t,u}f(x_m)$, with $t+u$ minimal $($and then with $t$ minimal, if necessary$)$. The existence of such a pair $(t,u)$ is guaranteed by Lemma \ref{Sec4_Lem8_prelim} and we have $0\leq u < x_{m+1}-x_{m}$.

\subsubsection*{Step 1} Let us first consider the case when $t + u \geq r+s$. Using \eqref{Sec4_Lem10_eq1_initial_est} we have
\begin{align*}
\sum_{n=1}^{N-1} & \frac{\big|\wt{M}_{\beta} f(x_{n}) - \wt{M}_{\beta}f(x_{n+1})\big|^q}{|x_{n} - x_{n+1}|^{q-1} }\\
&\leq \left|\frac{\wt{M}_{\beta}f(x_{m})- \wt{M}_{\beta}f(x_{m+1})}{x_{m}-x_{m+1}}\right|^{q-1}\sum_{n=1}^{N-1}\big|\wt{M}_{\beta}f(x_n)- \wt{M}_{\beta}f(x_{n+1})\big|\\
&=\left|\frac{\wt{M}_{\beta}f(x_{m})-\wt{M}_{\beta}f(x_{m+1})}{x_{m}-x_{m+1}}\right|^{q-1}\big|\wt{M}_{\beta}f(x_1)- \wt{M}_{\beta}f(x_N)\big|\\
&\leq\left|\frac{\wt{M}_{\beta}f(x_{m})-\wt{M}_{\beta}f(x_{m+1})}{x_{m}-x_{m+1}}\right|^{q-1}(r+s)^{\beta}\int_{x_1-r}^{x_N}|f'(x)|\,\dx\\
&=\left|(r+s)^{1-\beta}\, \frac{\wt{M}_{\beta}f(x_{m})-\wt{M}_{\beta}f(x_{m+1})}{x_{m}-x_{m+1}}\right|^{q-1}\int_{x_1-r}^{x_N}|f'(x)|\,\dx\\
&\leq\left|\left(\frac{{r+s}}{t+u}\right)^{1-\beta}\int_{x_{m}-t}^{x_{m}+u}\frac{f(y)-f(y+x_{m+1}-x_m - u)}{x_{m+1}-x_{m}}\,\dy\right|^{q-1}\int_{x_1-r}^{x_N}|f'(x)|\,\dx\\
&\leq\left|\left(\frac{{r+s}}{t+u}\right)^{1-\beta}\frac{1}{x_{m+1}-x_{m}}\int_{x_{m}-t}^{x_{m}+u}\int_{0}^{x_{m+1}-x_{m}-u}|f'(y+z)|\,\dz\,\dy\right|^{q-1}\int_{x_1-r}^{x_N}|f'(x)|\,\dx\\
&\leq\left|\left(\frac{{r+s}}{t+u}\right)^{1-\beta}\frac{1}{x_{m+1}-x_{m}}\int_{0}^{x_{m+1}-x_{m}-u}\int_{x_{m}-t}^{x_{m}+u}|f'(y+z)|\,\dy\,\dz\right|^{q-1}\int_{x_1-r}^{x_N}|f'(x)|\,\dx\\
&\leq\|f'\|_{L^{1}(\R)}^{q-1}\int_{x_1-r}^{x_N}|f'(x)|\,\dx\,,
\end{align*}
which establishes \eqref{Sec5_Lem10_goal1} in this case.

\subsubsection*{Step 2} We now consider the case when $t + u < r+s$ and $x_m - t \leq x_1$. In this case note that $x_1-r \leq x_m-t$ (which is clear if $m=1$, and if $m>1$ we note that $s < x_m - x_1$). Reasoning as before we have
\begin{align*}
& \sum_{n=1}^{N-1}  \frac{\big|\wt{M}_{\beta} f(x_{n}) - \wt{M}_{\beta}f(x_{n+1})\big|^q}{|x_{n} - x_{n+1}|^{q-1} }\\
&\leq  \left|\frac{\wt{M}_{\beta}f(x_{m})- \wt{M}_{\beta}f(x_{m+1})}{x_{m}-x_{m+1}}\right|^{q}\,\sum_{n=1}^{m-1}|x_n- x_{n+1}|  \\
& \ \ \ \ \ \ \ \ \ \ + \left|\frac{\wt{M}_{\beta}f(x_{m})- \wt{M}_{\beta}f(x_{m+1})}{x_{m}-x_{m+1}}\right|^{q-1}\,\sum_{n=m}^{N-1}\big|\wt{M}_{\beta}f(x_n)- \wt{M}_{\beta}f(x_{n+1})\big|\\
&\leq  \left|\frac{\wt{M}_{\beta}f(x_{m})- \wt{M}_{\beta}f(x_{m+1})}{x_{m}-x_{m+1}}\right|^{q} \,|x_1 - x_m|\\
& \ \ \ \ \ \ \ \ \ \ + \left|\frac{\wt{M}_{\beta}f(x_{m})- \wt{M}_{\beta}f(x_{m+1})}{x_{m}-x_{m+1}}\right|^{q-1}\big|\wt{M}_{\beta}f(x_m)- \wt{M}_{\beta}f(x_{N})\big|\\
&\leq  t\, \left|\frac{\wt{M}_{\beta}f(x_{m})- \wt{M}_{\beta}f(x_{m+1})}{x_{m}-x_{m+1}}\right|^{q} \\
& \ \ \ \ \ \ \ \ \ \ + \left|\frac{\wt{M}_{\beta}f(x_{m})- \wt{M}_{\beta}f(x_{m+1})}{x_{m}-x_{m+1}}\right|^{q-1} (t+u)^{\beta}\int_{x_m-t}^{x_N}|f'(x)|\,\dx.\\
& \leq \left|\left(\frac{{t}}{t+u}\right)^{1-\beta}\int_{x_{m}-t}^{x_{m}+u}\frac{f(y)-f(y+x_{m+1}-x_{m} - u)}{x_{m+1}-x_{m}}\,\dy\right|^{q}\\
& \ \ \ \ \ \ \ \ \ \ + \left|\int_{x_{m}-t}^{x_{m}+u}\frac{f(y)-f(y+x_{m+1}-x_{m}-u)}{x_{m+1}-x_{m}}\,\dy\right|^{q-1}\,\int_{x_m-t}^{x_N}|f'(x)|\,\dx\\
& \leq \left|\frac{1}{x_{m+1} - x_m}\int_{x_{m}-t}^{x_{m}+u}\int_{0}^{x_{m+1}-x_{m}-u}|f'(y+z)|\,\dz\,\dy\right|^{q}\\
& \ \ \ \ \ \ \ \ \ \ + \left|\frac{1}{x_{m+1} - x_m}\int_{x_{m}-t}^{x_{m}+u}\int_{0}^{x_{m+1}-x_{m}-u}|f'(y+z)|\,\dz\,\dy\right|^{q-1}\,\int_{x_m-t}^{x_N}|f'(x)|\,\dx\\
& = \left|\frac{1}{x_{m+1} - x_m}\int_{0}^{x_{m+1}-x_{m}-u}\int_{x_{m}-t}^{x_{m}+u}|f'(y+z)|\,\dz\,\dy\right|^{q}\\
& \ \ \ \ \ \ \ \ \ \ + \left|\frac{1}{x_{m+1} - x_m}\int_{0}^{x_{m+1}-x_{m}-u}\int_{x_{m}-t}^{x_{m}+u}|f'(y+z)|\,\dz\,\dy\right|^{q-1}\,\int_{x_m-t}^{x_N}|f'(x)|\,\dx\\
& \leq  \|f'\|_{L^{1}(\R)}^{q-1}\ \left\{\frac{1}{x_{m+1} - x_m}\int_{0}^{x_{m+1}-x_{m}-u}\int_{x_{m}-t}^{x_{N}}|f'(x)|\,\dx\,\dy + \int_{x_m-t}^{x_N}|f'(x)|\,\dx\right\}\\
& \leq 2  \, \|f'\|_{L^{1}(\R)}^{q-1} \int_{x_m-t}^{x_N}|f'(x)|\,\dx\\
& \leq 2  \, \|f'\|_{L^{1}(\R)}^{q-1} \int_{x_1-r}^{x_N}|f'(x)|\,\dx,
\end{align*}
which establishes \eqref{Sec5_Lem10_goal1}.

\subsubsection*{Step 3} Finally, we consider the case  when $t + u < r+s$ and $x_1 < x_m - t$. We let $k$ be the unique integer such that $x_{k-1} < x_m - t \leq x_k$. Arguing as in Step 2 we get 
\begin{align}\label{Sec5_cont_ind_1}
& \sum_{n=k}^{N-1}  \frac{\big|\wt{M}_{\beta} f(x_{n}) - \wt{M}_{\beta}f(x_{n+1})\big|^q}{|x_{n} - x_{n+1}|^{q-1} } \leq 2  \, \|f'\|_{L^{1}(\R)}^{q-1} \int_{x_{k-1}}^{x_N}|f'(x)|\,\dx .
\end{align}
We then proceed inductively. Let $(m_1, t_1, u_1, k_1) = (m,t,u,k)$. Having defined $(m_1, t_1, u_1, k_1), (m_2, t_2, u_2, k_2),$ $\ldots,$ $(m_{l-1}, t_{l-1}, u_{l-1}, k_{l-1})$, if $t_{l-1} + u_{l-1} < r+s$ and $x_1 < x_{m_{l-1}} - t_{l-1}$ then $k_{l-1}$ is the unique integer such that $x_{k_{l-1}-1} < x_{m_{l-1}} - t_{l-1} \leq x_{k_{l-1}}$. We then let $m_l$ be the smallest integer with $1 \leq m_l \leq k_{l-1}-1$ such that 
$$\frac{\widetilde M_{\beta}f(x_{m_l})-\widetilde M_{\beta}f(x_{m_l+1})}{|x_{m_l} - x_{m_l+1}|} =  \max\left\{\frac{\widetilde M_{\beta}f(x_n)-\widetilde M_{\beta}f(x_{n+1})}{|x_{n} - x_{n+1}|}; \,1 \leq n \leq k_{l-1}-1 \right\} > 0.$$
Let $t_l,u_l \geq 0$ be such that $\wt{M}_{\beta}f(x_{m_l}) = A_{t_l,u_l}f(x_{m_l})$, with $t_l+u_l$ minimal $($and then with $t_l$ minimal, if necessary$)$. The existence of such a pair $(t_l,u_l)$ is guaranteed by Lemma \ref{Sec4_Lem8_prelim}. If $t_{l} + u_{l} < r+s$ and $x_1 < x_{m_{l}} - t_{l}$ we let $k_l$ be the unique integer such that $x_{k_{l}-1} < x_{m_{l}} - t_{l} \leq x_{k_{l}}$. We then reboot Step 3 to obtain
\begin{align}\label{Sec5_cont_ind_2}
& \sum_{n=k_l}^{k_{l-1}-1}  \frac{\big|\wt{M}_{\beta} f(x_{n}) - \wt{M}_{\beta}f(x_{n+1})\big|^q}{|x_{n} - x_{n+1}|^{q-1} } \leq 2  \, \|f'\|_{L^{1}(\R)}^{q-1} \int_{x_{{k_l}-1}}^{x_{k_{l-1}}}|f'(x)|\,\dx .
\end{align}
This process must terminate, i.e. there is a smallest integer $L$ such that either: (i) $t_{L} + u_{L} \geq r+s$ or (ii) $t_{L} + u_{L} < r+s$ and $x_{m_{L}} - t_{L} \leq x_1$. In the first case (resp. second case) we use Step 1 (resp. Step 2) to bound the $q$-variation of $\wt{M}_{\beta}f$ (over the partition $\{x_n\}$) from $x_1$ to $x_{k_{L-1}}$. We then sum all the previous inequalities \eqref{Sec5_cont_ind_1} and \eqref{Sec5_cont_ind_2} to arrive at the desired conclusion (note that the sum of the integrals on right-hand sides of \eqref{Sec5_cont_ind_1} and \eqref{Sec5_cont_ind_2} has a two-fold overlap over each interval $[x_{{k_l}-1}, x_{k_l}]$).
\end{proof}

\begin{proof}[Proof of Proposition  \ref{q variation L}] Fix a partition $\mc{P} = \{x_1 < x_2 < \ldots < x_N\}$. For a generic function $g:\R \to \R$, we say that an interval $[x_n,x_m]$ is a {\it string of local maxima} of $g$ (relative to the partition $\mc{P}$)  if 
\begin{equation}\label{Sec5_string}
g(x_{n-1}) < g(x_n) = \ldots = g(x_m) > g(x_{m+1}),
\end{equation}
provided $n \neq 1$ and $m \neq N$. In case $n=1$ (resp. $m= N$) we disregard the leftmost (resp. rightmost) inequality in \eqref{Sec5_string}. When $m \neq N$, the rightmost point $x_m$ of such a string is a {\it right local maximum} of $g$, and when $n\neq 1$ the leftmost point $x_n$ is a {\it left local maximum} of $g$. We define {\it string of local minima}, {\it right local minimum} and {\it left local minimum} (relative to the partition $\mc{P}$) analogously.

\smallskip

Assume that $\wt{M}_{\beta}f$ is not constant (in case $\wt{M}_{\beta}f$ is constant the result is obviously true). The strategy is the same as in the discrete case, to bound the $q$-variation of $\wt{M}_{\beta}f$ between a right local maximum and the next left local minimum. The $q$-variation of $\wt{M}_{\beta}f$ between a right local minimum and the next left local maximum is treated analogously.

\smallskip

Let $x_{m}$ be a right local maximum and $x_{l}$ be the next left local minimum of $\wt{M}_{\beta}f$ (i.e. with the smallest $l>m$). Let $r,s \geq 0$ be such that $\wt{M}_{\beta}f(x_{m}) = A_{r,s}f(x_{m})$, with $r+s$ minimal (and then with $r$ minimal, if necessary). By Lemma \ref{Lem10_crucial_too} we have
\begin{equation}\label{Sec5_proof_Prop9_fund_eq}
\sum_{n=m}^{l-1} \frac{\big|\wt{M}_{\beta} f(x_{n+1}) - \wt{M}_{\beta}f(x_n)\big|^q}{|x_{n+1} - x_n|^{q-1} } \leq 4 \,\|f'\|_{L^1(\R)}^{q-1}\,\int_{x_m - r}^{x_l} |f'(x)|\,\dx.
\end{equation}
In case there exists a right local minimum $x_{l'}$ before $x_m$ (which is the case if $x_m$ is not the first right local maximum), note that we have $x_{l'} < x_m - r$. We may therefore sum the inequalities \eqref{Sec5_proof_Prop9_fund_eq} over all pairs of consecutive local extrema (the non-increasing and non-decreasing pieces). Noting the additional two-fold overlap on the right-hand side we arrive at 
\begin{equation*}
\sum_{n=1}^{N-1} \frac{\big|\wt{M}_{\beta} f(x_{n+1}) - \wt{M}_{\beta}f(x_n)\big|^q}{|x_{n+1} - x_n|^{q-1} } \leq 8 \,\|f'\|_{L^1(\R)}^{q}.
\end{equation*}
Since the right-hand side is now independent of the partition $\mc{P}$, we may take the supremum over all such partitions to obtain the desired result.
\end{proof}

\section*{Acknowledgents}
\noindent E.C. acknowledges support from CNPq-Brazil grants $305612/2014-0$ and $477218/2013-0$, and FAPERJ grant $E-26/103.010/2012$. J. M. acknowledges support from CAPES-Brazil. We credit the main idea for the proof of the boundedness part of Theorem \ref{Thm3} (i) to Prof. Keith Rogers (ICMAT - Madrid), to whom we are greatly thankful. We are also greatly thankful to Prof. Martin Lind for bringing the classical result of F. Riesz on the $q$-variation (Proposition \ref{Prop_Riesz}) and references \cite{BaLi,N} to our attention.

\end{document}